\documentclass[12pt]{amsart}

\usepackage{amsthm,amsfonts,amsmath,amssymb,latexsym,epsfig}
\usepackage[usenames]{color}
\usepackage[all]{xy}

\DeclareMathAlphabet\oldmathcal{OMS}        {cmsy}{b}{n}
\SetMathAlphabet    \oldmathcal{normal}{OMS}{cmsy}{m}{n}
\DeclareMathAlphabet\oldmathbcal{OMS}       {cmsy}{b}{n} 

\usepackage{eucal}

\newtheorem{theorem}{Theorem}[section]
\newtheorem{lemma}[theorem]{Lemma}
\newtheorem{proposition}[theorem]{Proposition}
\newtheorem{corollary}[theorem]{Corollary}
\newtheorem{definition}[theorem]{Definition}
\newenvironment{remark}{\medskip \refstepcounter{theorem}
\noindent  {\bf Remark \thetheorem}.\rm}{\,}
\newtheorem{example}{Example}[section]
\newtheorem*{ack}{Acknowledgements}

\renewcommand{\thetheorem}{\thesection.\arabic{theorem}}

\def\<{\langle}

\def\>{\rangle}

\def\BOne{{\mathchoice {\rm 1\mskip-4mu l} {\rm 1\mskip-4mu l}
                          {\rm 1\mskip-4.5mu l} {\rm 1\mskip-5mu l}}}

\def\fract#1#2{\raise4pt\hbox{$ #1 \atop #2 $}}
\def\decdnar#1{\phantom{\hbox{$\scriptstyle{#1}$}}
\left\downarrow\vbox{\vskip15pt\hbox{$\scriptstyle{#1}$}}\right.}

\def\bbc{{\mathbb C}}

\def\bbn{{\mathbb N}}

\def\bbp{{\mathbb P}}
\def\bbq{{\mathbb Q}}
\def\bbr{{\mathbb R}}

\def\bbt{{\mathbb T}}

\def\bbz{{\mathbb Z}}

\def\gra{\alpha}
\def\grb{\beta}

\def\grg{\gamma}
\def\gri{\iota}
\def\grk{\kappa}
\def\grl{\lambda}

\def\gro{\omega}

\def\grs{\sigma}

\def\grO{\Omega}

\def\grS{\Sigma}

\def\bfa{{\bf a}}
\def\bfb{{\bf b}}

\def\bfr{{\bf r}}

\def\bfw{{\bf w}}

\def\cala{{\mathcal A}}

\def\calo{{\mathcal O}}
\def\calu{{\mathcal U}}
\def\cald{{\mathcal D}}
\def\cale{{\mathcal E}}

\def\calg{{\mathcal G}}
\def\calh{{\mathcal H}}
\def\cali{{\mathcal I}}

\def\calk{{\mathcal K}}

\def\calo{{\mathcal O}}

\def\cals{{\oldmathcal S}}

\def\calu{{\mathcal U}}
\def\calv{{\mathcal V}}
\def\calw{{\mathcal W}}

\def\la#1{\hbox to #1pc{\leftarrowfill}}
\def\ra#1{\hbox to #1pc{\rightarrowfill}}

\def\ge{{\mathfrak e}}

\def\gt{{\mathfrak t}}
\def\gu{{\mathfrak u}}

\def\gw{{\mathfrak w}}

\def\gz{{\mathfrak z}}

\def\gA{{\mathfrak A}}

\def\gC{{\mathfrak C}}

\def\gK{{\mathfrak K}}

\def\gR{{\mathfrak R}}
\def\gS{{\mathfrak S}}

\def\tpi{\tilde{\pi}}

\def\hook{\mathbin{\hbox to 6pt{%
                 \vrule height0.4pt width5pt depth0pt
                 \kern-.4pt
                 \vrule height6pt width0.4pt depth0pt\hss}}}

\title{Sasakian Geometry on Sphere Bundles} 
\author{Charles P. Boyer}
\author{Christina W. T{\o}nnesen-Friedman}

\thanks{The first author was partially supported by grant \#519432 from the Simons Foundation. The second author was partially supported by grant \#422410 from the Simons Foundation.}

\date{\today}
\address{Charles P. Boyer, Department of Mathematics and Statistics,
University of New Mexico, Albuquerque, NM 87131.}
\email{cboyer@math.unm.edu} 
\address{Christina W. T{\o}nnesen-Friedman, Department of Mathematics, Union
College, Schenectady, New York 12308, USA } \email{tonnesec@union.edu}

\begin{document}

\begin{abstract}
The purpose of this paper is to study the Sasakian geometry on odd dimensional sphere bundles over a smooth projective algebraic variety $N$ with the ultimate, but probably unachievable goal of understanding the existence and non-existence of extremal and constant scalar curvature Sasaki metrics. We apply the fiber join construction of Yamazaki \cite{Yam99} for K-contact manifolds to the Sasaki case. This construction depends on the choice of $d+1$ integral K\"ahler classes $[\gro_j]$ on $N$ that are not necessarily colinear in the K\"ahler cone. We show that the colinear case is equivalent to a subclass of a different join construction orginally described in \cite{BG00a,BGO06}, applied to the spherical case by the authors in \cite{BoTo13,BoTo14a} when $d=1$, and known as cone decomposable \cite{BHLT16}.
The non-colinear case gives rise to infinite families of new inequivalent cone indecomposable Sasaki contact CR on certain sphere bundles. We prove that the Sasaki cone for some of these structures contains an open set of extremal Sasaki metrics and, for certain specialized cases, the regular ray within this cone is shown to have constant scalar curvature. We also compute the cohomology groups of all such sphere bundles over a product of Riemann surfaces.
\end{abstract}

\maketitle

\markboth{Sasakian Geometry on Sphere Bundles}{C. P. Boyer and C. W. T{\o}nnesen-Friedman}

\tableofcontents

\section{Introduction}
In 1999 T. Yamazaki \cite{Yam99} gave a construction of K-contact structures on certain odd dimensional sphere bundles over a symplectic manifold. It is easy to see that if one takes the base symplectic manifold to be a smooth projective algebraic variety $N$, Yamazaki's construction gives a family of Sasakian structures on these odd dimensional sphere bundles. More precisely they are the unit sphere bundles of a complex vector bundle that splits as a sum of line bundles. Such structures have recently become of interest in the study of Calabi-Yau $A_\infty$ algebras used in topological conformal field theory \cite{Cos07}. In particular, it was shown in  \cite{TaTs17} that the Calabi-Yau $A_\infty$ algebras studied in \cite{TsTsYau16} are equivalent to the standard de Rham differential graded algebra on certain odd dimensional sphere bundles. For this among other reasons, we feel that a concerted study of Sasaki geometry on sphere bundles is warranted. 

In Theorem 4.8 of \cite{BHLT16} a partial classification under a fairly restrictive condition was given in the case of $S^3$ bundles over a smooth projective algebraic variety. Moreover, several examples where the restrictive condition is violated were given some of which used Yamazaki's fiber join construction. These conditions involve invariants known as cone decomposability. It is the purpose of the present paper to give a further study of this decomposability issue within the context of Yamazaki's construction for Sasakian structures on odd dimensional sphere bundles. An ultimate (perhaps unachievable) goal is to classify the Sasaki CR structures on such sphere bundles as well as understand the existence or non-existence of extremal and constant scalar curvature Sasaki metrics on such sphere bundles. This paper takes the first few steps toward this goal. 

The fiber join construction requires choosing $d+1$ not necessarily distinct K\"ahler classes in the K\"ahler cone $\calk(N)$ of a smooth projective algebraic variety $N$ which produces a Sasakian structure on an $S^{2d+1}$ sphere bundle over $N$. We can divide these into two types: 1. all the K\"ahler classes are colinear; 2. not all K\"ahler classes are colinear. We show that this dichotomy corresponds to the notions of {\it cone decomposable} and {\it cone indecomposable} introduced in \cite{BHLT16} and which are invariants of the underlying Sasaki CR structure. In Section 3 we prove Proposition \ref{projprod} that the colinear fiber joins are equivalent to special cases of the Sasaki joins described in \cite{BGO06}. We refer to the latter as {\it regular} Sasaki joins. They have been studied in detail in \cite{BoTo14a} when $d=1$; however, here we also present some new results when $d>1$. It is easy to see that if $(N,\omega_N)$ is an extremal K\"ahler structure the colinear case always has an open set of extremal Sasaki metrics in its Sasaki cone. Our main focus will thus be on the non-colinear or cone indecomposable case where our proof of extremality is not a priori, but depends on the applicability of the admissible construction as described in \cite{ACGT08}. This requeires a splitting of the positive integer $d=d_0+d_\infty$. For the definition of admissible and super admissible see Section \ref{admsect}.

\begin{theorem}\label{gennoncothm}
Let $M_\gw$ be a super admissible cone indecomposable fiber join whose regular quotient is a ruled manifold of the form $\bbp(E_0\oplus E_\infty)\longrightarrow N$ where $E_0 ,E_\infty$ are projectively flat hermitian holomorphic vector bundles on $N$ of complex dimension $(d_0+1),(d_\infty+1)$ respectively, and $N$ is a local K\"ahler product of non-negative CSC metrics. Then the Sasaki cone of $M_\gw$ has an open set of extremal Sasaki metrics.
\end{theorem}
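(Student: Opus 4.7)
The plan is to reduce the extremality question on the Sasaki side to the admissible Kähler construction of \cite{ACGT08} on the regular quotient, and then propagate extremality to an open subset of the Sasaki cone by the standard openness principle for extremal Sasaki metrics.

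First, I would unpack the hypothesis that $M_\gw$ is a super admissible fiber join whose regular quotient has the form $\bbp(E_0\oplus E_\infty)\longrightarrow N$ with $E_0,E_\infty$ projectively flat of ranks $d_0+1$ and $d_\infty+1$. The fiber join construction packages the data of $d+1=d_0+d_\infty+2$ Kähler classes on $N$; in the cone indecomposable, super admissible case these split into two colinear blocks (one of size $d_0+1$ on the zero section, one of size $d_\infty+1$ on the infinity section) and thereby furnish precisely the algebraic input needed by the admissible construction of \cite{ACGT08}: two projectively flat hermitian holomorphic bundles over a local Kähler product of CSC factors. I would then verify that the natural Kähler structure on the regular quotient $\bbp(E_0\oplus E_\infty)$ inherited from the fiber join data coincides with an admissible Kähler class in the sense of \cite{ACGT08}, so that the admissible ansatz applies.

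Second, I would invoke the existence theorem for admissible extremal Kähler metrics from \cite{ACGT08} (in the case where the CSC factors of $N$ have non-negative scalar curvature, for which the relevant ODE admits admissible solutions without positivity obstructions). This produces an extremal Kähler metric on $\bbp(E_0\oplus E_\infty)$ whose Kähler class lies in the image of the Sasaki-to-Kähler correspondence for the regular Reeb ray of $M_\gw$. Lifting this metric back to $M_\gw$ via the standard regularization procedure yields an extremal Sasaki metric on $M_\gw$ in the regular ray of the Sasaki cone.

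Third, I would apply the openness principle for extremal Sasaki metrics: if a Reeb vector field in the Sasaki cone admits an extremal representative, then a neighborhood of that Reeb vector in the Sasaki cone also admits extremal Sasaki metrics. This is a direct application of the implicit function theorem, since the obstruction to extremality is the vanishing of the transverse Futaki-type invariant restricted to the orthogonal complement of the Lie algebra generated by the Reeb field, and the relevant linearization is elliptic and self-adjoint. Combining the existence at the regular ray with this openness statement yields an open set of extremal Sasaki metrics in the Sasaki cone of $M_\gw$.

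The main obstacle I anticipate is the first step: making the identification between the fiber-join Kähler data and the admissible data of \cite{ACGT08} precise enough to apply their existence theorem verbatim. In particular, one has to check that the transverse Kähler form associated to the regular Reeb ray on $M_\gw$ lies in the admissible Kähler cone (not merely the full Kähler cone), which amounts to a compatibility check between the super admissibility hypothesis, the splitting $d=d_0+d_\infty$, and the positivity conditions on the momentum interval governing the admissible ansatz. Once this compatibility is in place, the rest of the argument is formal.
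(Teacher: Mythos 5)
Your proposal follows essentially the same route as the paper: identify the regular quotient as an admissible projective bundle, invoke Proposition 11 of \cite{ACGT08} (the paper's Proposition \ref{acgtpr11}) giving an extremal K\"ahler metric in every admissible class over a local K\"ahler product of non-negative CSC metrics, lift to an extremal representative of the regular ray, and conclude with the openness theorem of \cite{BGS06}. The compatibility check you flag as the main obstacle is in fact dispatched by the hypothesis itself: super admissibility is defined precisely so that \emph{every} K\"ahler class of the regular quotient---in particular the transverse K\"ahler class of the regular Reeb ray---is admissible, which is exactly why the paper can apply the ACGT existence result without identifying that class explicitly.
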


It is possible to be more explicit and also to obtain CSC Sasaki metrics as well to relax the non-negativity assumption. However, to do so we need to understand the admissible polynomial, whose degree grows linearly with $d$, in more detail. 
Therefore, we do the analysis for some special cases of sphere bundles over certain products of Riemann surfaces.

\begin{theorem}\label{noncothm}
Let $M_K$ be a cone indecomposable admissible fiber join over $\grS_{g_1}\times \grS_{g_2}$. 
Then the Sasaki cone of $M_K$ has an open set of extremal Sasaki metrics containing a regular ray generated by $\xi_1$ 
\begin{enumerate}
\item for all $d\geq 1$ and for all 2 by 2 matrices $K$ with values in $\bbz^+$ if $g_1=0, g_2=0,1$;
\item if $g_1=0, g_2=g>1$, $d_0=d_\infty=1$ and  
\begin{equation*}\label{specextrmatrix}
K= \begin{pmatrix}
     2 & g \\
     1 & 1
\end{pmatrix};
\end{equation*}
\item with $d_0=d_\infty=0$, $g_1=g_2=g$, and $\xi_1$ having constant scalar curvature
\begin{enumerate}
\item if $g=0$ and $K= \begin{pmatrix}
    k & l \\
l & k
\end{pmatrix}$
with $k,l\in \bbz^+$ and $k\neq l$, or
\item $g=1$,
and $K= \begin{pmatrix}
     k^1_0 & k^2_0 \\
k^1_{\infty} & k^2_{\infty}
\end{pmatrix}$ is such that 
$\frac{k^1_0-k^1_\infty}{k^1_0+k^1_\infty}=\frac{k^2_\infty-k^2_0}{k^2_0+k^2_\infty}$, or
\item $g>1$,
if $K= \begin{pmatrix}
     k+1 & k \\
k & k+1
\end{pmatrix}$ is such that $k\in \bbz^+$ satisfies
$k> \lfloor\frac{2 g-3 +\sqrt{4 g^2-8 g+5}}{2} \rfloor$. 
\end{enumerate}
\end{enumerate}
\end{theorem}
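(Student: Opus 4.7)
The plan is to reduce each case to an explicit analysis of the admissible extremal (respectively CSC) polynomial associated to the fiber join $M_K$, and then verify positivity on the momentum interval $(-1,1)$. Recall from the admissible construction of \cite{ACGT08} that, in our setting, an element of the Sasaki cone gives an extremal representative precisely when a certain polynomial $F_e(z)$, explicitly determined by the scalar curvatures of the factors of the base, by the splitting $d=d_0+d_\infty$, and by the entries of the matrix $K$, is positive on the open interval $(-1,1)$. The regular ray $\xi_1$ corresponds to a distinguished choice of parameters in this construction, and the CSC condition amounts to the coefficient of the highest-order correction term in $F_e$ vanishing.

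For part (1), since $\Sigma_0=S^2$ (with its round metric of positive constant curvature) and $\Sigma_1$ (the torus, flat) are both non-negative CSC K\"ahler manifolds, the hypotheses of Theorem \ref{gennoncothm} are satisfied for every choice of $K\in M_{2\times 2}(\mathbb{Z}^+)$ and every admissible splitting $d=d_0+d_\infty$. Openness of extremality in the Sasaki cone is then automatic, and the regular ray $\xi_1$ lies in this open set by continuity.

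For part (2), we are outside the scope of Theorem \ref{gennoncothm} because $\Sigma_g$ with $g>1$ has negative scalar curvature. I would therefore write down the extremal polynomial $F_e(z)$ explicitly in the rank-two case $d_0=d_\infty=1$ with the prescribed $K$; substituting the scalar curvatures $2$ and $2-2g$ of the two factors and the entries of $K$ reduces the problem to verifying that a concrete polynomial of low degree in $z$ is positive on $(-1,1)$. The specific choice of $K$ in the statement is designed so that the $g$-dependence balances out favorably, and the positivity check should reduce to an elementary inequality; this is the step I expect to be the main obstacle, since negative base curvature generically breaks positivity.

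For part (3), which concerns the rank-one case $d_0=d_\infty=0$ and the CSC condition, the admissible setup yields a closed-form expression for the scalar curvature of the regular ray as a rational function of the fiber momentum and the entries of $K$. The CSC condition becomes an algebraic relation between the $k^j_0,k^j_\infty$ and $g$. In (3a) one verifies directly that the symmetric choice $K=\begin{pmatrix}k & l\\ l & k\end{pmatrix}$ with $k\neq l$ produces a solution in genus zero. In (3b) the stated ratio condition is precisely the vanishing of the obstruction to CSC when $g_1=g_2=1$, so the identity drops out by direct substitution. In (3c) the difficulty is that for $g>1$ the CSC equation forces a constraint whose solvability in $\mathbb{Z}^+$ requires $k$ to dominate the curvature contribution of the two genus-$g$ factors; solving the resulting quadratic inequality in $k$ yields the stated floor bound $k>\lfloor(2g-3+\sqrt{4g^2-8g+5})/2\rfloor$. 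This final bound, obtained by ensuring positivity of the admissible polynomial throughout $(-1,1)$ simultaneously with the CSC identity, is the most delicate point of the proof.
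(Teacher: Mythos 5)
Your overall strategy is the same as the paper's (part (1) by reduction to Theorem \ref{gennoncothm} over a non-negative CSC base, parts (2) and (3) by the admissible extremal/CSC polynomial analysis of \cite{ACGT08}), but the two steps that carry the actual content of the theorem are left undone. For part (2), the positivity of the extremal polynomial on $(-1,1)$ is precisely the point at issue, and you defer it ("the step I expect to be the main obstacle"). The paper settles it in Proposition \ref{appenprop}: for $d_0=d_\infty=1$ the choice $K=\begin{pmatrix}2&g\\1&1\end{pmatrix}$ normalizes the admissible data to $s_1=2$, $s_2=-2$, $r_1=1/3$, $r_2=(g-1)/(g+1)$, and then $F_{extr}(\zeta)=(1-\zeta^2)^2h(\zeta)/(\cdot)$ where every coefficient of $h$ expanded in powers of $(1+\zeta)$ is positive for all $0<r_1,r_2<1$; this proves existence in \emph{every} admissible (hence, by super admissibility, every) K\"ahler class, which is what lets the paper avoid identifying the particular class of the regular quotient. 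Without some such explicit verification your case (2) is an assertion, not a proof, and "the $g$-dependence balances out favorably" is exactly what must be checked.

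For part (3) everything hinges on knowing \emph{which} admissible class is the transverse class of $\xi_1$, i.e.\ on deriving the admissible data $s_a=\frac{2(1-g_a)}{k^a_0-k^a_\infty}$, $r_a=\frac{k^a_0-k^a_\infty}{k^a_0+k^a_\infty}$ of Equation \eqref{admdata}; the paper obtains this in Appendix \ref{kahclass} from the transverse symplectic form \eqref{transsympl} via the momentum/Legendre description, and you take the resulting "closed-form expression" for granted. Without it one cannot even write down the CSC equations that the entries of $K$ must satisfy, so none of (3a)--(3c) follows. Moreover, the paper does not solve the CSC system \eqref{(17)ACGT08}--\eqref{(18)ACGT08} in general: it imposes the ansatz $s_1+s_2=0$, $r_1+r_2=0$ of Lemma \ref{ACGT08lemma} (this is what forces the symmetric shapes of $K$ in (3a)--(3c)), reads off $s$ explicitly, and in (3c) the inequality $k^2+(3-2g)k+(1-g)>0$ (equivalently the floor bound on $k$) arises exactly from $s\geq 0$, which by the second statement of Lemma \ref{ACGT08lemma7} makes positivity of $Q(\zeta)$ automatic --- a cleaner mechanism than the "simultaneous" positivity-plus-CSC analysis you sketch, and one you would still have to substantiate. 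Two smaller points: in (1) you should verify super admissibility (Remark 2 of \cite{ACGT08}: $b_2(N_a)=1$ and at most one factor with $b_1\neq 0$) before invoking Theorem \ref{gennoncothm}, and the regular ray is extremal (up to isotopy) because its quotient class is admissible, not merely "by continuity"; also the non-colinearity hypothesis is needed to have $r_1\neq r_2$, i.e.\ a genuine two-factor admissible case.
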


In the case of a product of Riemann spheres more can be said. For example we can prove the existence of a countable infinity of inequivalent Sasaki contact structure on the same (up to diffeomorphism)  sphere bundle.

\begin{theorem}\label{gen0prodcor}
Consider the set $\{M_K\}$ of cone indecomposable admissible fiber joins over $\bbc\bbp^1\times \bbc\bbp^1$ in case (1) of Theorem \ref{noncothm}. There exist at least one diffeomorphism type within this set that admits a countable infinity of  inequivalent Sasaki contact structures all of which have an open set of extremal Sasaki metrics in their Sasaki cone.
\end{theorem}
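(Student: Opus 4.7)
The strategy is to fix a single diffeomorphism type inside the family of cone indecomposable admissible fiber joins over $\bbc\bbp^1\times\bbc\bbp^1$ described in case (1) of Theorem \ref{noncothm}, and to exhibit a countably infinite set of matrices $K\in M_2(\bbz^+)$ realizing this diffeomorphism type with pairwise inequivalent underlying contact structures. Extremality of the open set in the Sasaki cone is then automatic from Theorem \ref{noncothm}(1), so the entire task is topological and contact-topological.

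First I would specialize the general cohomology computation announced for products of Riemann surfaces to the base $\bbc\bbp^1\times\bbc\bbp^1$. Because the Picard group is $\bbz^2$ and the bundle $E_0\oplus E_\infty$ is a sum of line bundles labelled by the columns of $K$, the characteristic classes of $M_K$ become explicit polynomials in the four entries $k^j_i$; moreover $\pi_1(M_K)=1$ and the cohomology ring is torsion-free. Using either a Smale--Barden type classification in the lowest dimension or, at higher $d$, a matching of the cohomology ring together with Pontrjagin classes, I would produce an infinite sequence $\{K_n\}_{n\geq 1}$ whose total spaces $M_{K_n}$ all share a common diffeomorphism type.

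Second, I would distinguish the underlying contact CR structures by means of the first Chern class $c_1(\cald_{K_n})\in H^2(M_{K_n};\bbz)$ of the contact distribution, which the fiber join construction expresses as an explicit linear combination of the pull-backs of the $d+1$ K\"ahler classes with coefficients coming from $K$. The goal is to choose the $K_n$ so that this class takes infinitely many values that cannot be pairwise identified by any automorphism of the integral cohomology ring; any contactomorphism would induce such an identification, so the infinitely many $c_1(\cald_{K_n})$ witness infinitely many contact isotopy classes, and in particular infinitely many inequivalent Sasaki contact CR structures on the fixed underlying manifold.

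The hardest step is the diffeomorphism promotion. Matching cohomology rings and Pontrjagin classes across an infinite family can be arranged by imposing a small number of polynomial constraints on the entries of $K_n$, together with the row- and column-swap symmetries inherent to the construction. Promoting such matching to a smooth diffeomorphism in dimension $2d+5$, however, requires either careful use of modified-surgery arguments or an explicit geometric reduction (for instance by exhibiting $M_{K_n}$ as the boundary of a uniform disk bundle constructed from interpolating vector bundles on $\bbc\bbp^1\times\bbc\bbp^1$). Ensuring that the available diffeomorphism freedom is transverse to the variation of $c_1(\cald)$ across the family is the central technical obstacle.
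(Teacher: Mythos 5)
Your overall architecture (fix the topology, vary $K$, separate contact structures by $c_1(\cald_K)$, get extremality from part (1) of Theorem \ref{noncothm}) parallels the paper's, but the step you yourself call the central obstacle --- actually producing an infinite sequence $\{K_n\}$ whose total spaces are diffeomorphic --- is left unproved, and that is precisely where your plan fails as written. The paper never constructs such diffeomorphisms. Instead it arranges an infinite subfamily with \emph{fixed} integral cohomology ring and \emph{fixed} first Pontrjagin class (for $d>1$ it takes $K=\begin{pmatrix} k_0 & 2\\ k_\infty & 2\end{pmatrix}$, for which $p_1(M_K)=-8(d_0+d_\infty+2)\,\pi^*_M[\grO_1\wedge\grO_2]$ is independent of $k_0,k_\infty$; for $d=1$ it fixes the differences $b=k^1_0-k^1_\infty$, $c=k^2_0-k^2_\infty$ so that $p_1(M_K)=2bc\,\pi^*(x_1x_2)$ is constant), and then invokes finiteness theorems: for $d>1$ the Euler class vanishes, the spaces are formal (Lemma \ref{forlem}), and Sullivan's Theorem 13.1 of \cite{Sul77} gives only finitely many diffeomorphism types; for $d=1$ the spaces are \emph{not} formal (a triple Massey product is non-zero), and the paper uses Kreck--Triantafillou (Corollary 2.3 of \cite{KrTr91}), which applies only after passing to a subsequence with $p_1\equiv 2\ge \pmod 4$, i.e.\ imposing the parity condition $bk_2+ck_1\equiv 0\pmod 2$. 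Finiteness of diffeomorphism types plus the pigeonhole principle, combined with the fact that $c_1(\cald_K)$ takes infinitely many values along the family, is all the theorem requires; no explicit ``diffeomorphism promotion'' by modified surgery or disk-bundle cobounding is needed, and your proposal, which demands one, is incomplete without it.

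Two further inaccuracies would trip up your execution. For $d=1$ the cohomology of $M_K$ is not torsion-free: Proposition \ref{k>1topprop} gives torsion $\bbz_e$ in $H^4$ with $e=k^1_0k^2_\infty+k^2_0k^1_\infty$, so the Euler class does not vanish and its order varies with $K$, which both constrains which $K$ can share a topological type and destroys formality, so Sullivan's theorem alone cannot be cited there. And there is no ``lowest dimension'' Smale--Barden case in this family: over $\bbc\bbp^1\times\bbc\bbp^1$ the smallest dimension occurring is $7$ (the case $d=1$), not $5$. Your second step (distinguishing contact structures by $c_1(\cald_K)$ modulo automorphisms of the cohomology ring) is sound and matches the paper's use of Equation \eqref{c1noncoeqn}, but it does not compensate for the missing topological finiteness argument.
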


\begin{remark} A similar result holds in the cone decomposable case.
\end{remark}

In Section \ref{rsprod} we describe the topology of the manifolds $M_K$. If $d_0$ and $d_\infty$ are both odd, $M_K$ is a spin manifold for any $K$, otherwise, both spin and non-spin can occur. In Proposition \ref{k>1topprop} we compute the cohomology groups of a general Sasaki fiber join on $S^{2d+1}$ bundles over the product of Riemann surfaces $\grS_{g_1}\times \grS_{g_2}$. More can be said when $g_1=g_2=0$. For case (a) of Theorem \ref{noncothm} we prove in Proposition \ref{g0homeo} that each ordered pair $(k,l)$ of positive integers with $k>l$ determines a unique homeomorphism type of $S^3$ bundle over $\bbc\bbp^1\times \bbc\bbp^1$, so there exists a countable infinity of such homeomorphism types. Furthermore, in each case the diffeomorphism type is known up to finite ambiguity.
Since $\bbc\bbp^1\times \bbc\bbp^1$ has a 2-torus of Hamiltonian automorphisms, all the $M_K$ of type (1) are toric, and of Koiso-Sakane type, that is, the quotient manifold of the regular ray is the Bott manifold $$M_3(0,k-l,-(k-l))= \bbp\bigl(\BOne\oplus \calo(k-l,-(k-l))\bigr).$$ 
The K\"ahler geometry of these quotients has been studied in detail elsewhere (see for example \cite{BoCaTo17} and references therein).  

Also in Section 5.4 we present some more extremality results in special cases when $g_1\neq g_2$ as well as cases with negative constant transverse scalar curvature. Finally in Section 5.5 we present the existence results of extremal Sasaki metrics for special cases with higher dimensional base space $N$.

\begin{ack}
We thank Eveline Legendre and Hongnian Huang for their interest in our work.
\end{ack}

\section{Brief Review of Sasaki Geometry}
Recall that a Sasakian structure on a contact manifold $M^{2n+1}$ of dimension $2n+1$ is a special type of contact metric structure $\cals=(\xi,\eta,\Phi,g)$ with underlying almost CR structure $(\cald,J)$ where $\eta$ is a contact form such that $\cald=\ker\eta$, $\xi$ is its Reeb vector field, $J=\Phi |_\cald$, and $g=d\eta\circ (\BOne \times\Phi) +\eta\otimes\eta$ is a Riemannian metric. $\cals$ is a {\it K-contact} structure if $\xi$ is a Killing vector field and it is {\it Sasakian} if in addition the almost CR structure is integrable, i.e. $(\cald,J)$ is a CR structure. We refer to \cite{BG05} for the fundamentals of Sasaki geometry. We call $(\cald,J)$ a  {\it Sasaki CR structure} or {\it CR structure of Sasaki type}, and $\cald$ a {\it Sasaki contact structure} or {\it contact structure of Sasaki type}. We shall always assume that the Sasaki manifold $M^{2n+1}$ is compact and connected.

\begin{definition}\label{SasCRequiv}
Let $(M,\cald,J)$ and $(M',\cald',J')$ be Sasaki CR structures. We say that $(M,\cald,J)$ and $(M',\cald',J')$ are {\bf equivalent}, denoted $(M',\cald',J')\approx(M,\cald,J)$, if there exists a diffeomorphism $\psi:M\longrightarrow M'$ such that 
$$\psi_*\cald=\cald', \qquad J'=\psi_* J \psi_*^{-1}.$$
\end{definition}

\subsection{Invariants and the Classification of Sasaki CR Structures}
The classification of Sasaki CR structures on a given manifold is of major importance to us. An important invariant is the conical family of Sasakian structures within a fixed contact CR structure $(\cald,J)$ known as the {\it (unreduced) Sasaki cone} and denoted by $\gt^+$. We are also interested in a variation within this family. To describe the Sasaki cone we fix a Sasakian structure $\cals_o=(\xi_0,\eta_o,\Phi_o,g_o)$ on $M$ whose underlying CR structure is $(\cald,J)$ and let $\gt$ denote the Lie algebra of the maximal torus in the automorphism group of $\cals_o$. The {\it (unreduced) Sasaki cone} \cite{BGS06} is defined by
\begin{equation}\label{sascone}
\gt^+(\cald,J)=\{\xi\in\gt~|~\eta_o(\xi)>0~\text{everywhere on $M$}\},
\end{equation}
which is a cone of dimension $k\geq 1$ in the Lie algebra $\gt$. When the underlying CR structure $(\cald,J)$ is understood we often write $\gt^+$ instead of $\gt^+(\cald,J)$. The {\it reduced Sasaki cone} $\grk(\cald,J)$ is $\gt^+(\cald,J)/\calw$ where $\calw$ is the Weyl group of the maximal compact subgroup of $\gC\gR(\cald,J)$ which is viewed as the {\it moduli space} of Sasakian structures with underlying CR structure $(\cald,J)$, and it is an important invariant of the Sasaki CR structure, that is if $(\cald',J')\approx(\cald,J)$ are equivalent Sasaki CR structures, then $\grk(\cald,J)=\grk(\cald',J')$. In particular, the reducibility structure \cite{BHLT16} of $\grk(\cald,J)$ which gives rise to a multi-foliate structure on the manifold $M$. This multifoliate structure is an invariant, up to order, of the CR Sasaki structure. On the tangent space level it corresponds to decomposing $\cald$ into irreducible subspaces up to order. A cruder invariant is the sequence of ranks of the individual pieces. However, in the present paper it is enough to consider just two types of Sasaki CR structures, namely, cone decomposable and cone indecomposable. Examples of such Sasaki CR structures were given in Section 4 of \cite{BHLT16}. 

Another important invariant of $(\cald,J)$ is its first Chern class $c_1(\cald)$. In fact it is an invariant of the underlying contact structure. So if the contact structures $\cald$ and $\cald'$ are equivalent, then $c_1(\cald)=c_1(\cald')$. The converse is not true. There are more subtle invariants related to `contact homology' which were explored to some extent for Sasaki contact structures in \cite{BoPa10,BMvK15}, but will not concern us here.

In practice it is more convenient to work with the unreduced Sasaki cone $\gt^+(\cald,J)$. It is also clear from the definition that $\gt^+(\cald,J)$ is a cone under the transverse scaling defined by
\begin{equation}\label{transscale}
\cals=(\xi,\eta,\Phi,g)\mapsto \cals_a=(a^{-1}\xi,a\eta,g_a),\quad g_a=ag+(a^2-a)\eta\otimes\eta, \quad a\in\bbr^+
\end{equation}
So Sasakian structures in $\gt^+$ come in rays, and since the Reeb vector field $\xi$ is Killing $\dim\gt^+\geq 1$, and it follows from contact geometry that $\dim\gt^+\leq n+1$. When $\dim\gt^+(\cald,J)=n+1$ we have a toric contact manifold of Reeb type studied in \cite{BM93,BG00b,Ler02a,Ler04,Leg10,Leg16}. In this case there is a strong connection between the geometry and topology of $(M,\cals)$ and the combinatorics of $\gt^+(\cald,J)$. Much can also be said in the complexity 1 case ($\dim\gt^+(\cald,J)=n$) \cite{AlHa06}.

Recall \cite{BGS06} that a Sasakian structure $(\xi,\eta,\Phi,g)$ is {\it extremal} if the $(1,0)$ gradient of the scalar curvature is transversely holomorphic. A main focus of the present paper is existence proofs of extremal and constant scalar curvature Sasaki metrics in certain cone indecomposable CR Sasaki structures on sphere bundles. The essential technique is the admissible construction described in \cite{ACGT08}. We shall often use the openness theorem of \cite{BGS06} which says that if an extremal Sasaki metric exists, there is an open set of extremal Sasaki metrics in the Sasaki cone $\gt^+(\cald,J)$. We emphasize that both extremal and constant scalar curvature Sasaki metrics come in rays; however, it is the transverse scalar curvature $s^T_g=s_g+2n$, which is the scalar curvature of the transverse K\"ahler metric $g^T$, that rescales not the scalar curvature $s_g$ of the Sasaki metric $g$. Under scaling the latter transforms as $s_{g_a}=a^{-1}(s_g+2n)-2n$.

\subsection{Sphere Bundles}
In this paper we are interested in $2d+1$ dimensional sphere bundles\footnote{By a sphere bundle we mean an oriented sphere bundle with the linear structure group $SO(d+1)$.} $M$ over a smooth projective algebraic variety $N$ of complex dimension $n$ such that a Sasakian structure on $M$ restricts to a weighted Sasakian structure on each fiber which is the standard sphere $S^{2d+1}$:
\begin{equation}\label{sphbun}
\begin{matrix} S^{2d+1} &\longrightarrow &M \\
&& \decdnar{} \\
&& N.
\end{matrix}
\end{equation}
The dimension of the Sasaki manifold $M$ is $2n+2d+1$, and note that if $N$ is toric, so is $M$ with $\dim \gt^+(\cald,J)=n+d+1$. More generally, the complexity of $M$ equals the complexity of $N$, and we have 
\begin{equation}\label{dimsascone}
d+1\leq \dim\gt^+(\cald,J)\leq n+d+1.
\end{equation}
It is well known that the Sasaki cone of the standard Sasaki CR structure on $S^{2d+1}$ is the $(d+1)$-st orthant, and this defines a subcone of $\gt^+$ of dimension $d+1$ which we denote by $\gt^+_{sph}(\cald,J)$. The sphere bundles studied in this paper all have a regular Reeb vector field $\xi_1\in\gt^+_{sph}(\cald,J)$ which plays an important role for us, and whose quotient is described in more detail in Section \ref{orbquot} below. However, in contrast there are sphere bundles with CR Sasaki structures having no regular Reeb field in their Sasaki cone. In \cite{BG05h} infinitely many such Sasaki CR structures are given on the trivial sphere bundles $S^{2d}\times S^{2d+1}$ for $d>1$ which have Sasaki metrics of  positive Ricci curvature . They are represented by Brieskorn manifolds belonging to infinitely many inequivalent contact structures \cite{BMvK15} and it is shown in \cite{BovC16} that their Sasaki cones admit no extremal Sasaki metrics whatsoever.

We briefly discuss some topology of sphere bundles over smooth projective algebraic varieties. From the long exact homotopy sequence of the bundle \eqref{sphbun} we have 
\begin{equation}\label{homotopy}
\pi_i(M)\approx \pi_i(N)~ \text{for $i\leq 2d$}. 
\end{equation}
Yamazaki treated the special case of sphere bundles over a Riemann surface $\grS_g$ of genus $g$. In this case there are precisely two diffeomorphism types, namely the trivial bundle $\grS_g\times S^{2d+1}$, and the nontrivial bundle $\grS_g\widetilde{\times} S^{2d+1}$. These are distinguished by the second Stiefel-Whitney class of the complex vector bundle $E$.

Consider the Leray-Hirsch Theorem of the fibration \eqref{sphbun}. The condition that there is a global $(2d+1)$-class that restricts to the fundamental class of $S^{2d+1}$ implies the vanishing of the Euler class of the bundle. 
Then from the Leray-Serre spectral sequence of \eqref{sphbun} we get

\begin{proposition}\label{fibjointop}
Let $N$ be a compact symplectic manifold and $M$ an $S^{2d+1}$ bundle over $N$. Then 
$$H^p(M,\bbz)\approx H^p(N,\bbz)$$ 
for all $p< 2d+1$. In particular, if $n=\dim_\bbc N\leq d$ then $M$ has the integer cohomology groups of the product $N\times S^{2d+1}$.
\end{proposition}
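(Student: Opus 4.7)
The plan is to run the cohomological Leray--Serre spectral sequence of the sphere bundle \eqref{sphbun} with integer coefficients, exploit the very sparse cohomology of the fiber, and invoke the Leray--Hirsch hypothesis recorded in the paragraph preceding the statement to split the resulting extensions. Since the structure group of the bundle lies in a special orthogonal group, the bundle is orientable and the action of $\pi_1(N)$ on $H^*(S^{2d+1},\bbz)$ is trivial, so the local coefficient system is constant and
\[
E_2^{p,q}=H^p\bigl(N,H^q(S^{2d+1},\bbz)\bigr)
=\begin{cases} H^p(N,\bbz) & q=0 \text{ or } q=2d+1,\\ 0 & \text{otherwise.}\end{cases}
\]
Thus $E_2$ is concentrated in two horizontal rows separated by a gap of $2d+1$, and the only possibly nonzero differential between them is the transgression $d_{2d+2}\colon E_{2d+2}^{p,2d+1}\to E_{2d+2}^{p+2d+2,0}$.

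For the first assertion, fix $k<2d+1$. Any surviving $E_\infty^{p,q}$ contributing to total degree $k$ must have $q=0$ and $p=k$, because the row $q=2d+1$ sits above the total-degree line $p+q=k$. Moreover, $E_\infty^{k,0}$ is obtained from $E_2^{k,0}=H^k(N,\bbz)$ only by quotienting out the image of differentials landing in bidegree $(k,0)$; the only such is $d_{2d+2}$ coming from $E_{2d+2}^{k-2d-2,2d+1}$, whose source group is zero since $k-2d-2<0$. Hence $E_\infty^{k,0}=H^k(N,\bbz)$ and the filtration on $H^k(M,\bbz)$ has a single nonzero quotient, giving $H^k(M,\bbz)\cong H^k(N,\bbz)$.

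For the second assertion, suppose $n=\dim_\bbc N\le d$. The target $E_{2d+2}^{p+2d+2,0}$ of the transgression is a subquotient of $H^{p+2d+2}(N,\bbz)$, and $p+2d+2\ge 2d+2\ge 2n+2>2n=\dim_\bbr N$, so this target vanishes for every $p\ge 0$. Therefore $d_{2d+2}=0$ identically and the spectral sequence collapses at $E_2$. The discussion immediately preceding the proposition provides a global class in $H^{2d+1}(M,\bbz)$ restricting to a generator of $H^{2d+1}(S^{2d+1},\bbz)$, so the Leray--Hirsch theorem applies and yields an isomorphism
\[
H^*(M,\bbz)\cong H^*(N,\bbz)\otimes_{\bbz} H^*(S^{2d+1},\bbz)
\]
of $H^*(N,\bbz)$-modules. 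In particular, as abelian groups
\[
H^p(M,\bbz)\cong H^p(N,\bbz)\oplus H^{p-2d-1}(N,\bbz)\cong H^p(N\times S^{2d+1},\bbz),
\]
which is the claimed statement.

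The only genuine subtlety is the second step: one must check that the transgression kills the fiber class (equivalently, that the extensions arising from the filtration split over $\bbz$), and this is exactly where the Leray--Hirsch hypothesis, guaranteed in our setting by the existence of the global $(2d+1)$-class, is used. Everything else is bookkeeping with the two-row $E_2$ page.
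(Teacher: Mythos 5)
Your overall route is the same as the paper's: the two-row Leray--Serre spectral sequence of \eqref{sphbun}, with the Euler class as the only possible transgression, plus Leray--Hirsch to split the filtration. The first half of your argument (the isomorphism $H^p(M,\bbz)\approx H^p(N,\bbz)$ for $p<2d+1$, and the collapse at $E_2$ when $n\leq d$ because the transgression lands in a subquotient of $H^{p+2d+2}(N,\bbz)=0$) is correct as written.

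The gap is in how you invoke Leray--Hirsch. You assert that ``the discussion immediately preceding the proposition provides a global class in $H^{2d+1}(M,\bbz)$ restricting to a generator'' of the fiber cohomology. The paper asserts the opposite implication: \emph{if} such a global class exists, \emph{then} the Euler class vanishes. No such class is provided in general, and indeed for $n>d$ it typically does not exist (see Remark \ref{n>drem} and Proposition \ref{k>1topprop}(2), where the Euler class is nonzero and $H^{2d+2}(M,\bbz)$ acquires torsion), so its existence is exactly what must be proved under the hypothesis $n\leq d$. As written, your justification of the Leray--Hirsch hypothesis is circular; also, vanishing of the transgression is not ``equivalent'' to the splitting of the filtration extensions, it only gives the collapse. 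The fix is short and uses only what you already have: since the spectral sequence collapses, $E_\infty^{0,2d+1}=E_2^{0,2d+1}\cong H^{2d+1}(S^{2d+1},\bbz)$, and the edge homomorphism $H^{2d+1}(M,\bbz)\to E_\infty^{0,2d+1}$ is restriction to the fiber, hence surjective; equivalently, the Gysin sequence $H^{2d+1}(M,\bbz)\xrightarrow{\pi_*}H^0(N,\bbz)\xrightarrow{\cup\, e}H^{2d+2}(N,\bbz)=0$ produces $u$ with $\pi_*u=1$, which restricts to a generator on each fiber. With such a $u$ in hand, Leray--Hirsch (or the explicit splitting $b\mapsto \pi^*b\cup u$ via the projection formula) gives $H^p(M,\bbz)\cong H^p(N,\bbz)\oplus H^{p-2d-1}(N,\bbz)$, which is the product cohomology as groups, completing the second statement.
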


\begin{remark}\label{grpnotring}
In the last statement of Proposition \ref{fibjointop} we have an isomorphism of groups, but not necessarily an isomorphism of rings. It is an interesting and important problem to determine the cohomology ring structure.
\end{remark}

We are also interested in the possible diffeomorphism types of $M$ for which we apply Sullivan's rational homotopy theory \cite{Sul77} to our sphere bundles \eqref{sphbun}.
If $N$ is simply connected\footnote{More generally, we can assume that $\pi_1(N)$ is nilpotent and that it acts nilpotently on the higher homotopy groups of $N$. Then $N$ is said to be a nilpotent space.} the rational homotopy type of $M$ is well understood, see Example 2.69 in \cite{FeOpTa08}. For the sphere bundle \eqref{sphbun} Sullivan's relative minimal model is
\begin{equation}\label{rathomsphbun}
(\wedge V,d)\longrightarrow (\wedge V\otimes u,d)\longrightarrow (\wedge u,0)
\end{equation}
where $(\wedge V,d)$ is a differential commutative graded algebra (cdga) that is a model for $N$, $(\wedge u,0)$ a model for $S^{2d+1}$, and $du=\ge$ is a cocyle in $(\wedge V)^{2d+2}$ representing the Euler class of the sphere bundle $M$. The minimal model is completely determined by the Euler class. Recall that a cdga is said to be {\it formal} if there exists a quasi-isomorphism\footnote{that is a morphism of cdga's that induces an isomorphism in cohomology.}
$(\wedge V,d)\approx (H^*(M,\bbq),0)$. Then

\begin{lemma}\label{forlem}
A sphere bundle $M$ over a nilpotent base $N$ is formal when $n=\dim_\bbc N\leq d$.
\end{lemma}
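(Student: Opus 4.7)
The key observation is that the hypothesis $n\leq d$ forces the Euler class of the sphere bundle to vanish rationally, after which the relative minimal model \eqref{rathomsphbun} splits as a tensor product. First I would note that the cocycle $\varepsilon=du\in (\wedge V)^{2d+2}$ represents a class in $H^{2d+2}(N,\bbq)$, but $\dim_\bbr N=2n\leq 2d<2d+2$ forces $H^{2d+2}(N,\bbq)=0$. Hence $\varepsilon$ is a coboundary in the minimal model $(\wedge V,d)$ of $N$, say $\varepsilon=d\tau$ with $\tau\in (\wedge V)^{2d+1}$. Replacing the generator $u$ by $u':=u-\tau$ yields a free generator that is now closed, so the minimal model of $M$ may be rewritten as the tensor product $(\wedge V,d)\otimes (\wedge u',0)$, that is, the tensor product of the minimal models of $N$ and $S^{2d+1}$.

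Next I would invoke formality of each factor. The sphere $S^{2d+1}$ is trivially formal, since $(\wedge u',0)$ already coincides with its rational cohomology. The base $N$, being a smooth projective algebraic variety as throughout the paper, is formal by the classical theorem of Deligne-Griffiths-Morgan-Sullivan. Since formality is preserved under tensor products (one tensors the two formality quasi-isomorphisms and invokes K\"unneth), $M$ is formal.

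The only real subtlety is the step from $H^{2d+2}(N,\bbq)=0$ to exactness of $\varepsilon$ in $(\wedge V,d)$: the graded piece $(\wedge V)^{2d+2}$ itself need not vanish in the minimal model, but the cohomological statement is precisely what is needed to perform the change of generator $u\mapsto u'$. Once that point is isolated, the remainder of the proof is essentially bookkeeping, and one sees that the hypothesis $n\leq d$ is sharp: for $n=d+1$ the top class of $N$ can serve as a nonzero Euler class, and the resulting sphere bundle need no longer be formal.
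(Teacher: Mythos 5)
Your proof is correct and follows essentially the same route as the paper: the hypothesis $n\leq d$ kills the Euler class for degree reasons, so the relative model \eqref{rathomsphbun} becomes (after your change of generator $u\mapsto u-\tau$) a tensor product of models of $N$ and $S^{2d+1}$, whence formality. You simply make explicit two points the paper's one-line proof leaves implicit, namely the passage from exactness of $\varepsilon$ to a genuinely closed generator and the formality of the projective base via Deligne--Griffiths--Morgan--Sullivan.
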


\begin{proof}
When $n=\dim_\bbc N\leq d$ the Euler class $\ge$ vanishes giving the isomorphism.
\end{proof}

\section{Yamazaki's Fiber Join Construction}
A procedure of Yamazaki which he called the `fiber join' allows the construction of infinitely many Sasakian structures on the total space of odd dimensional sphere bundles over a projective algebraic variety. Yamazaki \cite{Yam99} does this for K-contact structures, but as we shall see the restriction to Sasakian structures works equally well. For $j=1,\ldots,d+1$ let $\cals_j=(\xi_j,\eta_j,\Phi_j,g_j)$ be regular K-contact structures on the manifolds $M_j$ with the same smooth manifold $N$ as quotient but possibly different integral symplectic forms $\gro_j$. We assume that $d>0$ throughout. The manifold $M_j$ is the total space of a principal $S^1$ bundle over $N$. Let $L_j$ denote the complex line bundle on $N$ associated to $M_j$ such that $c_1(L_j)=[\gro_j]$. We identify $L_j$ with $M_j\times_{S^1}\bbc$. Then Yamazaki shows that the unit sphere bundle $M$ in the complex vector bundle $E=\oplus_{j=1}^{d+1}L^*_{j}$ has natural K-contact structures which we describe below. We should mention here that there is a generalization of Yamazaki's construction due to Lerman \cite{Ler04b} where the sphere bundle is replaced by a `contact fiber bundle'. This generalization was used in \cite{BGO06} (Theorem 3.5) to construct toric Sasaki manifolds that are contact fiber bundles with a toric Sasaki fiber, and a toric symplectic base. Here is Yamazaki's fiber join:

\begin{definition}\label{fiberjoindef}
The smooth manifold $M=M_1*_f\cdots *_fM_{d+1}$ defined to be the unit sphere in the complex vector bundle $E=\oplus_{j=1}^{d+1}L^*_{j}$ is called the {\bf fiber join} of the $M_j$.
\end{definition}

The line bundle $L^*_j$ has a Hermitian metric which defines a `norm' $r_j:L^*_j\rightarrow \bbr_{\geq 0}$. We
let $(r_j,\theta_j)$ denote polar coordinates on the fiber of the line bundle $L^*_j$. Then $M$ is an $S^{2d+1}$-bundle over $N$ whose fibers are defined by the equation $\sum_{j=1}^{d+1}r_j^2=1$. The fibers $S^{2d+1}$ can be thought of as the topological join 
$$S^1*\fract{d+1~times}{\cdots}* S^1=S^{2d+1}.$$

\subsection{The K-Contact Structure}
Recall Definition 6.4.7 of \cite{BG05} that a {\it K-contact structure} is a contact metric structure $(\xi,\eta,\Phi,g)$ such that $\xi$ is Killing vector field. We note that this condition is equivalent to $\pounds_\xi\Phi=0$.

The contact bundle is $\cald=\ker\eta$ and the real line bundle generated by $\xi$ is denoted by $\bbr\xi$. It is convenient to decompose the tangent bundle of $M$ into horizontal and vertical parts as $TM=\calh + \calv$. We then see that 
$\bbr\xi\subset \calv$ and $\calh\subset \cald$ and we have the decompositions
$$TM=\calh+\calv/\bbr\xi+\bbr\xi,\qquad \cald=\calh+\calv/\bbr\xi.$$

Let $\bfa=(a_1,\ldots,a_{d+1})\in(\bbr^+)^{d+1}$ and consider the 1-form on $\oplus_{j=1}^{d+1}L^*_j$ defined by\footnote{Note that $a_j=1/\grl_j$ and the two Equations \eqref{conform} and \eqref{Reebfield} have relative minus signs in Yamazaki which can be absorbed into the definition of $\theta_j$.}
\begin{equation}\label{conform}
\eta_\bfa=\sum_{j=1}^{d+1}\frac{1}{a_j}r_j^2(\eta_j+d\theta_j).
\end{equation}
We can check that this restricts to a contact form on $M$ and its Reeb vector field is
\begin{equation}\label{Reebfield}
\xi_\bfa=\frac{1}{2}\sum_{j=1}^{d+1}a_j(\xi_j+\partial_{\theta_j}).
\end{equation}
As a complex line bundle on $N$ we can identify the total space of the line bundle $L_j$ with the quotient $M_j\times_{S^1}\bbc$ where the $S^1$ action on $M_j$ is the flow of the Reeb vector field $\xi_j$ and its action on $\bbc$ is induced by the vector field $-\partial_{\theta_j}$. Thus, on $L_j$ we can identify $\partial_{\theta_j}$ with $\xi_j$ which implies that on $M$ we have the indentification
$$\xi_\bfa=\sum_{j=1}^{d+1}a_j\partial_{\theta_j}$$
in terms of the polar coordinates $\{(r_j,\theta_j)\}_{j=1}^{d+1}$ on each fiber $S^{2d+1}$ satisfying $\sum_jr_j^2=1$. 
The geometry transverse to the Reeb foliation of $\cals_\bfa=(\xi_\bfa,\eta_\bfa,\Phi_\bfa,g_\bfa)$ on the fiber join $M$ is given by the transverse symplectic form
\begin{equation}\label{transsympl}
d\eta_\bfa=\sum_{j=1}^{d+1}\frac{1}{a_j}\bigl(r_j^2d\eta_j +2r_jdr_j\wedge(\eta_j+d\theta_j)\bigr)
\end{equation}
This transverse symplectic form splits as a sum of a nondegenerate 2-form on $\calh=\sum_j\calh_j$ and a nondegenerate 2-form on $\calv=\sum_j\calv_j$ as
\begin{equation}\label{sumsymp}
d\eta_\bfa= \sum_j\frac{1}{a_j}r_j^2d\eta_j + 2\sum_j\frac{1}{a_j}(r_jdr_j\wedge (\eta_j+d\theta_j).
\end{equation} 

The transverse Riemannian metric is then given by
\begin{equation}\label{transKah}
g_\bfa^T =d\eta_a\circ(\BOne\otimes \Phi_\bfa)=\sum_{j=1}^{d+1}\frac{1}{a_j}d\bigl(r^2_j(\eta_j+d\theta_j)\bigr)\circ(\BOne\otimes \Phi_\bfa),
\end{equation}
so the K-contact metric is
\begin{equation}\label{Kconmetric}
g_\bfa= d\eta_a\circ(\BOne\otimes \Phi_\bfa) +\eta_\bfa\otimes \eta_\bfa.
\end{equation}

Theorem 3.2 of Yamazaki states that this $\cals_\bfa$ gives the fiber join $M_1*_f\cdots *_fM_{d+1}$ a K-contact structure. It will be convenient to consider the integral symplectic forms within the set of symplectic forms $\{\gro_j\}_{j=1}^{d+1}$. The following result is implicit in \cite{Yam99}

\begin{proposition}\label{imyam}
Let $M$ be the $d+1$ fiber join of Yamazaki with its induced K-contact structures. Then $M$ admits an action of the $(d+1)$-torus $T^{d+1}$ of Reeb type that leaves the K-contact structure invariant. Furthermore, the K-contact structure on $M$ restricts to the standard toric contact structure on the fiber $F_x\approx S^{2d+1}$ for each $x\in N$. 
\end{proposition}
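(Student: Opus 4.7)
The plan is to take the obvious $T^{d+1}$-action on $E=\oplus_{j=1}^{d+1}L_j^*$ that rotates each line-bundle factor independently, and then exploit the $\theta_j$-translation invariance visible in Equations \eqref{conform}--\eqref{Kconmetric}. Concretely, each $L_j^*$ carries a canonical Hermitian circle action rotating its fibres; these $d+1$ actions commute, preserve every norm $r_j$, and therefore descend to a $T^{d+1}$-action on the unit sphere bundle $M\subset E$. In the polar coordinates $(r_j,\theta_j)$ the infinitesimal generators are $\partial_{\theta_1},\ldots,\partial_{\theta_{d+1}}$, and they span the Lie algebra $\gt$ of the torus. Under the paper's identification $\xi_j=\partial_{\theta_j}$ on $L_j$, each $\xi_j$ itself lies in $\gt$.

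Next I would verify that this $T^{d+1}$-action preserves the whole K-contact datum $\cals_\bfa=(\xi_\bfa,\eta_\bfa,\Phi_\bfa,g_\bfa)$. Each $r_j^2$ is constant along $\partial_{\theta_k}$-orbits, each $d\theta_j$ is manifestly $\theta$-translation invariant, and each $\eta_j$, viewed via the principal $S^1$-bundle $M_j\to N$ as a connection-type 1-form that in a local trivialization reads $d\theta_j+\pi^*A_j$, is likewise $\partial_{\theta_k}$-invariant. Hence every summand in \eqref{conform} is $T^{d+1}$-invariant; consequently $d\eta_\bfa$ and the Reeb vector field \eqref{Reebfield} are invariant, and since $\Phi_\bfa$ is assembled from the pullback of $J_N$ along the horizontal subbundle $\calh$ and the standard rotation-invariant complex structure on the fibres of $E$ restricted to $\calv/\bbr\xi$, the operator $\Phi_\bfa$ and the K-contact metric $g_\bfa$ of \eqref{Kconmetric} are preserved as well.

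For the Reeb-type property I would read off \eqref{Reebfield} directly: after the identification above, $\xi_\bfa=\sum_j a_j\,\partial_{\theta_j}$ is a strictly positive combination of the torus generators and therefore lies in the interior of the cone they span, which is exactly the definition of Reeb type. For the final clause I would restrict to a fibre $F_x\cong S^{2d+1}$: pulling back to $F_x$ kills the horizontal pieces $\pi^*A_j$, so only the $d\theta_j$-parts of $\eta_j$ survive. Thus $\eta_\bfa|_{F_x}$ becomes, up to an overall positive constant, the standard weighted toric contact form on $\{\sum_j r_j^2=1\}\subset\bbc^{d+1}$, and the generators $\partial_{\theta_j}$ act as the standard toric coordinates on $S^{2d+1}$, finishing the identification with the standard toric contact structure.

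The one genuinely delicate step is recording the local connection-form description $\eta_j=d\theta_j+\pi^*A_j$ that extends $\eta_j$ off the unit circle bundle $M_j\subset L_j^*$ and makes both its $T^{d+1}$-invariance and its vanishing on a fibre transparent; this is standard from the principal $S^1$-bundle picture but must be written down once to make the invariance arguments above fully rigorous. After that, every remaining step reduces to formal $\theta_j$-translation invariance of the Yamazaki formulas, and the proposition follows.
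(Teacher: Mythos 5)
Your proof is correct and uses the same torus as the paper, but it handles the two substantive points by a more computational route. The paper, following Yamazaki, takes the $(d+1)$-dimensional abelian algebra spanned by the fields $\xi_j+\partial_{\theta_j}$ (which under the identification $\xi_j\sim\partial_{\theta_j}$ on $L_j$ is exactly your fiberwise rotation torus), reads Reeb type directly off \eqref{Reebfield}, and then argues structurally for the fiber statement: the fields $\xi_j-\partial_{\theta_j}$ are tangent to each fiber $F_x$, hence the torus and the Reeb field act on $F_x$, so $\ker\eta_\bfa\cap TF_x$ is a codimension-one distribution and $\eta_\bfa|_{F_x}$ is a contact form admitting a $T^{d+1}$-action of Reeb type, which forces the standard structure on $S^{2d+1}$; the paper then cites Yamazaki's identification with Takahashi's weighted sphere. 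You instead verify invariance of each ingredient of $\cals_\bfa$ by hand (the paper leaves this to Yamazaki) and compute $\eta_\bfa|_{F_x}$ explicitly in a local trivialization, obtaining $\sum_j a_j^{-1}r_j^2\,d\theta_j$ on $\{\sum_j r_j^2=1\}$. Both routes work; yours makes the connection-form bookkeeping explicit (which, as you note, is the one step that must be written carefully, since $\eta_j$ and $d\theta_j$ are separately only defined upstairs and it is the basic combination $\eta_j+d\theta_j$ that descends to $L_j^*\setminus\{0\}$), while the paper's argument avoids coordinates at the cost of invoking the classification of toric Reeb-type contact structures on the sphere. One small imprecision to fix: the restricted form is not ``up to an overall positive constant'' the standard form --- its kernel is the weighted contact structure with weights $a_j^{-1}$, not literally the standard distribution --- so you should close, as the paper does, by invoking the standard equivalence of the weighted sphere with the standard toric contact structure (Takahashi, or Example 7.1.12 of \cite{BG05}), e.g.\ via Gray stability along the path of weights joining $\bfa$ to $(1,\ldots,1)$. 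Also, Reeb type means only that $\xi_\bfa$ lies in the Lie algebra of the torus, which your formula already gives; positivity of the coefficients is not itself the definition.
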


\begin{proof}
Following \cite{Yam99} we see that the vector fields $\xi_j+\partial_{\theta_j}$ span an $(d+1)$-dimensional Abelian Lie algebra $\gt_{d+1}$ which generate the $T^{d+1}$ action. Furthermore, it is clear from Equation \eqref{Reebfield} that $\xi_\bfa\in\gt_{d+1}$, so the action is of Reeb type. Note also that the vector fields $\xi_j-\partial_{\theta_j}$ restricted to a fiber $F_x\approx S^{2d+1}$ are tangent to $F_x$. So the torus $T^{d+1}$ acts on each fiber. Thus, the Reeb vector field $\xi_\bfa$ restricted to $F_x$ is tangent to $F_x$. It follows that $\ker\eta_\bfa\cap TF_x$ is a codimension 1 distribution on $F_x$. So the restriction of $\eta_\bfa$ to $F_x$ is a contact form on $F_x$ with a $T^{d+1}$ action of Reeb type which implies that $\eta_\bfa |_{F_x}$ defines the standard contact structure on $F_x\approx S^{2d+1}$. We also note that Yamazaki shows that his K-contact structure on the sphere is contact equivalent to the weighted sphere structure of Takahashi \cite{Tak78} (This is described by Example 7.1.12 of \cite{BG05}). 
\end{proof}

\begin{remark}\label{conerem}
By defining $R=\sqrt{\sum_{j=1}^{d+1}r_j^2)}$ we can identify $\oplus_{j=1}^{d+1}L^*_j\setminus \{\underline{0}\}$ with the cone $C(M)=M\times \bbr^+$ where $\underline{0}$ denotes the zero section and $R\in\bbr^+$.
\end{remark}

\subsection{The Sasaki Case}
In this paper we are concerned with applying Yamazaki's fiber join construction to the case of Sasaki manifolds. Thus, we assume that $N$ is a smooth projective algebraic variety and that the symplectic forms are K\"ahler with respect to some complex structure\footnote{The fiber join can be generalized to the orbifold category, but we do not do so here}. So the K\"ahler classes $[\gro_j]$ are integral classes, i.e. $[\gro_j]\in H^2(N,\bbz)\cap H^{1,1}(N,\bbr)=\calk_{NS}(N)$ called the Neron-Severi lattice of the K\"ahler cone. In this case the total space $M_j$ of the principal $S^1$ bundle has a regular Sasakian structure, so the corresponding holomorphic line bundles $L_j$ are positive satisfying $c_1(L_j)=[\gro_j]$. It is important to realize that the K\"ahler forms $\gro_j$ are not necessarily distinct and on the fiber join we get deformations of Sasakian structures for each $(d+1)$-tuple $(\gro_1,\ldots,\gro_{d+1})$ of K\"ahler forms on $N$ such that $[\gro_j]\in\calk_{NS}(N)$ for all $j=1,\ldots,d+1$. 

So in the Sasaki case the fiber join construction requires a choice of $d+1$ elements (not necessarily distinct) of $\calk_{NS}(N)$. This defines the set $\gS_{d+1}$ consisting of $d+1$ elements of $\calk_{NS}(N)$, and each element of $\gS_{d+1}\otimes \bbz^+\approx (\bbz^+)^{d+1}$ gives rise to a fiber join.   For each fiber join we have a Sasaki cone's worth of isotopy classes of Sasakian (K-contact) structures. We choose an ordering on $\gS_{d+1}\otimes \bbz^+$ as follows: let $r$ denote the number of distinct elements of $\gS_{d+1}$ with $s_j$ choices of $[\gro_j]$ with $j=1,\ldots,r$. Then we have $s_1+\cdots +s_r=d+1$, and we give a partial ordering by $s_1\geq s_2\geq\cdots \geq s_r$. If $s_i=s_j$ for $i\neq j$ we make a choice to give an ordering. Within the subset consisting of elements $s_k$ copies of $[\gro_k]$ we choose the order according to $b_1\geq b_2\geq\cdots \geq b_{s_k}$ where $b_j\in\bbz^+$, and again if $b_k=b_j$ we simply make a choice. So we consider $\gS_{d+1}\otimes \bbz^+$ to be an ordered set whose elements are denoted by $\gw$. This gives an (almost) CR structure $(\cald_\gw,J)$ of Sasaki (K-contact) type. We shall denote the Sasaki (K-contact) manifold constructed by this fiber join by $M_\gw$. We denote the underlying contact bundle on $M_\gw$ by $\cald_\gw$ and write the contact manifold as $(M_\gw,\cald_\gw)$ and the corresponding CR manifold as $(M_\gw,\cald_\gw,J)$. When $M_\gw$ is understood, we denote the underlying (almost) CR structure by $(\cald_\gw,J)$.
From the fiber join construction one sees that the contact manifold $(M,\cald_\gw)$ is independent of the order of the $(M_j,\gro_j)$, so we shall simply choose a fixed order for our set $\gw$, regarding a different order as equivalent. A special case of interest is when $\gro_j=\gro$ for a fixed symplectic form $\gro$ and all $j$, that is when the integer $r=1$. In this case we write the contact structure as $\cald_\bfb$ where $\bfb=(b_1,\cdots,b_{d+1})\in\bbz^{d+1}$.

\begin{theorem}\label{Yamthm}
Let $N$ be a smooth projective algebraic variety of complex dimension $n$ with integral K\"ahler forms $\gro_j$ for $j=1,\ldots,d+1$. Let $L_j$ be positive holomorphic line bundles on $N$ defined by $c_1(L_j)=[\gro_j]$. Then the unit sphere bundle $M_\gw$ in $\oplus_jL^*_j$ with its underlying contact structure $\cald_\gw$, has a natural Sasaki CR structure $(\cald_\gw,J)$ with a $d+1$-dimensional family of Sasakian structures denoted by $\cals_\bfa=(\xi_\bfa,\eta_\bfa,\Phi_\bfa,g_\bfa)$ for each $\bfa\in(\bbr^+)^{d+1}$ such that the Sasaki automorphism group $\gA\gu\gt(\cals_\bfa)$ contains the torus $\bbt^{d+1}$.
\end{theorem}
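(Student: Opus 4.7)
Yamazaki's Theorem 3.2 (cited in the excerpt) already provides the K-contact structure $\cals_\bfa=(\xi_\bfa,\eta_\bfa,\Phi_\bfa,g_\bfa)$ for each $\bfa\in(\bbr^+)^{d+1}$, and Proposition \ref{imyam} produces a $T^{d+1}$-action leaving the K-contact structure invariant. Thus the only additional content of Theorem \ref{Yamthm} under the strengthened Sasaki hypothesis (that $N$ is a smooth projective algebraic variety and each $[\gro_j]$ is an integral K\"ahler class) is integrability of the almost CR structure $(\cald_\gw,J)$ with $J=\Phi_\bfa|_{\cald_\gw}$; once this is established, the torus of Proposition \ref{imyam} automatically acts by Sasaki automorphisms, giving $\bbt^{d+1}\subseteq\gA\gu\gt(\cals_\bfa)$.

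The plan for integrability exploits the complex geometry of the ambient vector bundle. Under the Sasaki hypothesis each line bundle $L_j$ (and hence $L_j^*$) is holomorphic on the complex manifold $N$, so $E=\bigoplus_{j=1}^{d+1}L_j^*$ is a holomorphic vector bundle and a complex manifold with integrable complex structure $\tilde J$. Via Remark \ref{conerem}, $M_\gw$ sits inside $E\setminus\{\underline 0\}$ as the real hypersurface $\{R=1\}=\{\sum_j r_j^2=1\}$, and as such inherits a canonical integrable CR structure $\cald_\gw^{\rm ind}=TM_\gw\cap \tilde J(TM_\gw)$ with $J^{\rm ind}=\tilde J|_{\cald_\gw^{\rm ind}}$. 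I would then verify that this induced CR structure agrees with Yamazaki's $(\cald_\gw,J)$. Because every real hypersurface in a complex manifold has integrable induced CR structure, this identification immediately gives integrability, hence $\cals_\bfa$ is Sasakian for every $\bfa$. Moreover $(\cald_\gw^{\rm ind},J^{\rm ind})$ is manifestly independent of $\bfa$, confirming the $(d+1)$-parameter family of Sasakian structures sharing a common CR structure.

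The principal step requiring care is the identification of $(\cald_\gw,J)$ with $(\cald_\gw^{\rm ind},J^{\rm ind})$. In local holomorphic coordinates $(z_\alpha,w_j)$ on $E$ with $r_j^2=h_j(z)^{-1}|w_j|^2$, where $h_j$ is the Hermitian metric on $L_j$ whose curvature represents $\gro_j$, the Chern connection 1-form $\eta_j$ on $L_j^*$ together with the angular coordinate $d\theta_j$ encodes the $\tilde J$-action on the vertical directions, while the horizontal complex structure of $E$ restricts to that of $N$. A direct computation then shows that the horizontal/vertical splitting $\cald_\gw=\calh\oplus\calv/\bbr\xi_\bfa$ used by Yamazaki in \eqref{sumsymp} and \eqref{transKah} coincides with the decomposition of $\cald_\gw^{\rm ind}$ into $\tilde J$-stable pieces, and that $\Phi_\bfa$ and $\tilde J$ act identically on each piece; the $\bfa$-dependence drops out because $\bfa$ only affects the scaling of the transverse K\"ahler form, not its complex type. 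An alternative route would be to compute the Nijenhuis tensor of $\Phi_\bfa$ directly using the same splitting: the horizontal contribution vanishes by integrability of $J_N$, the vertical contribution by the known Sasaki structure on the weighted standard sphere $S^{2d+1}$, and the mixed terms by the compatibility of the Chern connection forms $\eta_j$ with the holomorphic structure of $L_j^*$.
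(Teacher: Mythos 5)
Your proposal is correct, but your primary route to integrability differs from the paper's. The paper also reduces to showing that the almost CR structure underlying Yamazaki's K-contact structures is integrable, but it then argues directly on the splitting $\cald_\gw\approx\calh\oplus(\calv/\bbr\xi_\bfa)$: the isomorphism $\calh\approx\pi^*TN$ shows the horizontal piece carries the lifted integrable complex structure of $N$, while the vertical piece carries the transverse complex structure of the weighted sphere, which by Equation 7.1.3 of \cite{BG05} is independent of $\bfa$ and coincides with the standard one on $S^{2d+1}$; the torus statement is obtained, as in your argument, from the fact that the restriction to each fiber is the standard toric contact structure of Reeb type. Your main argument instead realizes $M_\gw$ as the real hypersurface $\{R=1\}$ in the total space of the holomorphic bundle $E=\oplus_jL^*_j$ and imports integrability from the ambient complex structure, reducing everything to the identification of Yamazaki's $(\cald_\gw,J)$ with the induced CR structure; this buys automatic integrability (no Nijenhuis or mixed-term check) at the price of that identification, which you rightly flag as the step requiring care. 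One caution there: as the paper's own proof of Proposition \ref{c1yam} points out, under the identification of $E\setminus\{\underline{0}\}$ with the cone $C(M_\gw)$ (Remark \ref{conerem}) the Sasakian complex structure is the \emph{conjugate} of the complex structure of $E$, so with the paper's conventions your claimed equality of $\Phi_\bfa$ and $\tilde J$ on $\cald_\gw$ holds only up to conjugation; since a CR structure is integrable if and only if its conjugate is, this does not affect your conclusion, but the identification should be stated accordingly. Your alternative route (direct Nijenhuis computation on the splitting, with the mixed terms handled via the Chern connections) is essentially the paper's argument made more explicit, and either version, once the bookkeeping is done, yields the theorem, with Proposition \ref{imyam} then giving $\bbt^{d+1}\subseteq\gA\gu\gt(\cals_\bfa)$ exactly as you say.
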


\begin{proof}
Yamazaki shows that the structure $\cals_\bfa=(\xi_\bfa,\eta_\bfa,\Phi_\bfa,g_\bfa)$ is K-contact for each $\bfa\in(\bbr^+)^{d+1}$. So it suffices to show that the underlying almost CR structure on the contact bundle $\cald_\gw$ is integrable. For each Reeb field $\xi_\bfa\in\gt^+_{sph}$ we have the decomposition $\cald_\gw\approx \calh\oplus(\calv/\bbr\xi_\bfa)$. The isomorphism $\calh\approx\pi^*TN$ shows that the almost complex structure on $\calh$ is the lifted integrable complex structure on $TN$. Moreover, as in the proof of Proposition \ref{imyam} the almost complex structure on $\calv/\bbr\xi_\bfa$ is that of the weighted sphere. But it follows from Equation 7.1.3 of \cite{BG05} that this is independent of the weights $\bfa$ and coincides with the standard transverse complex structure on the sphere $S^{2d+1}$. Thus, for any choice of weight vector $\bfa$ the K-contact structure $\cals_\bfa$ is Sasakian. Moreover, since the Sasakian structure restricted to a fiber belongs to the standard toric contact structure of Reeb type, the Sasaki automorphism group $\gA\gu\gt(\cals_\bfa)$ contains the torus $\bbt^{d+1}$ for each Reeb vector field $\xi_\bfa$ in the Sasaki cone $\gt^+$.
\end{proof}

\begin{remark}\label{Kahrem}
Note that when the symplectic forms $\gro_j$ are K\"ahler, Yamazaki's assumption that $\sum_{j=0}^n\frac{1}{a_j}r^2_j\pi^*\gro_j$ is non-degenerate is automatically satisfied.
\end{remark}

\subsection{The Orbifold Quotients}\label{orbquot}
For each quasi-regular Reeb field $\xi_\bfa$ we have an $S^1$ orbi-bundle over the projective orbifold $\bbp_\bfa[\oplus L^*_j]$ which is an orbi-bundle over $N$ with fiber the weighted projective space $\bbc\bbp^d[\bfa]$. 
The $S^1$ action is generated by a quasi-regular Reeb vector field lying in the Sasaki subcone $\gt^+_{sph}$ of $M$. First consider the fiberwise $\bbc^*$ action with weight vector $\bfa=(a_1,\ldots,a_{d+1})\in (\bbz^+)^{d+1}$, viz. $\cala: \oplus_{j=1}^{d+1}L^*_j\ra{1.6} \oplus_{j=1}^{d+1}L^*_j$ defined by
\begin{equation}\label{C*act}
\cala(v_1,\ldots,v_{d+1})=(\grl^{a_1}v_1,\ldots,\grl^{a_{d+1}}v_{d+1}).
\end{equation}
We denote the group of this action by $\bbc^*_\bfa$. Restricting this to the circle subgroup $S^1_\bfa\subset \bbc^*_\bfa$ action on the unit sphere bundle $M$ gives the identification 
\begin{equation}\label{quotM}
(\oplus_{j=1}^{d+1}L^*_j\setminus\underline{0})/\bbc^*_\bfa=M/S^1_\bfa.
\end{equation}
Here and hereafter we assume that $\gcd(a_1,\ldots,a_{d+1})=1$.
The circle group $S^1_\bfa\subset\bbt^{d+1}$ acts only on the fibers $F_x\approx S^{2d+1}$ as a weighted circle action. It thus acts locally freely on $M_\gw$, so the quotient $M_\gw/S^1_\bfa$ is a projective algebraic orbifold. In fact, it follows from Equation \eqref{quotM} that we have the weighted projectivization:
$$M_\gw/S^1_\bfa = (\oplus_{j=1}^{d+1}L^*_j\setminus\underline{0})/\bbc^*_\bfa =: \bbp_\bfa[\oplus_{j=1}^{d+1}L^*_j]$$
which is a `fiber bundle' over $N$ whose fibers are orbifolds, specifically a weighted projective space $\bbc\bbp^d[\bfa]$ with its canonical orbifold structure. So for each weighted $\bbc^*_\bfa$ action we have the commutative diagram
\begin{equation}\label{bundiag}
\xymatrix{
S^1_\bfa \ar[d]^{id} \ar[r]  &S^{2d+1}\ar[d] \ar[r] &\bbc\bbp^d[\bfa] \ar[d]  \\
               S^1_\bfa \ar[r]&M_\gw \ar[r] \ar[d] &\bbp_\bfa(\oplus_{j=1}^{d+1}L^*_j)\ar[d]  \\
&N  \ar[r]^{id}  &N.}
\end{equation}
The regular case when $\bfa=(1,\ldots,1)$ is of particular interest to us where we have the standard projectivization \cite{BoTu82}. 
\begin{equation}\label{bundiagreg}
\xymatrix{
S^1\ar[d]^{id} \ar[r]  &S^{2d+1}\ar[d] \ar[r] &\bbc\bbp^d \ar[d]  \\
               S^1 \ar[r]&M_\gw \ar[r] \ar[d] &\bbp(\oplus_{j=1}^{d+1}L^*_j)\ar[d]  \\
&N \ar[r]^{id}  &N.}
\end{equation}

\subsection{Reduction of the Structure Group}
Since the vector bundle $E$ splits as a sum of nontrivial complex line bundles, the transition functions of $E$ takes its values in the complex torus $\bbt^{d+1}_\bbc$. They can be represented by a $d+1$ by $d+1$ diagonal matrix $\grg=(\grg_1,\ldots,\grg_{d+1})$. Now each holomorphic line bundle corresponds to an element of $H^1(N,\calo^*)$, i.e.  equivalences classes of \v{C}ech cocyles, represented by transition functions $g_{\gra\grb}\in \calo^*(U_\gra\cap U_\grb)$ where equivalence is given by 
$$g'_{\gra\grb}=\phi_\gra g_{\gra\grb}\phi^{-1}_\grb$$
for some $\phi_\gra\in\calo^*(U_\gra)$. Then the transition functions\footnote{The parentheses in the superscript indicates an independent labelling; whereas, without the parentheses such as $g^b_{\gra\grb}$ will indicate a power of $g_{\gra\grb}$.} of $\bbt^{d+1}_\bbc$ takes the form 
$$\begin{pmatrix} 
 g^{(1)}_{\gra\grb} & 0 & 0 \\
 0 & \ddots &0 \\
 0 &0 & g^{(d+1)}_{\gra\grb}
 \end{pmatrix}$$
 where $g^{(j)}_{\gra\grb}\in\calo^*(U_\gra\cap U_\grb)$. If we let $g_{\gra\grb}$ denote the transitions functions for a positive line bundle $L$, the transition functions for $L^*$ are $g^{-1}_{\gra\grb}$. The question is when can we reduce these transition functions to a $\bbc^*$ subgroup? The transition functions for such a subgroup, denoted by $\bbc^*_\bfb$, takes the form
 \begin{equation}\label{Geqn}
 G_{\gra\grb}(\bfb)=\begin{pmatrix} 
 g^{-b_1}_{\gra\grb} & 0 & 0 \\
 0 & \ddots &0 \\
 0 &0 & g^{-b_{d+1}}_{\gra\grb}
 \end{pmatrix}
 \end{equation}
where $b_j\in\bbz^+$. 
 
We make use of the following lemma whose proof is easy.  

\begin{lemma}\label{fiberjoincolinear}
Let $M_\gw=M_1*_f\cdots *_fM_{d+1}$ be a fiber join with a collection $\gS_{d+1}$ of $(d+1)$ K\"ahler classes $[\gro_j]$ (not necessarily distinct). Then the following are equivalent:
\begin{enumerate}
\item All $[\gro_j]\in\gS_{d+1}\subset \calk_{NS}(N)$ are colinear;
\item there exists a primitive K\"ahler class $[\gro_N]\in \calk_{NS}(N)$ and $b_j\in\bbz^+$ such that $[\gro_j]=b_j[\gro_N]$ for all $j=1,\ldots,d+1$;
\item after possibly tensoring $L_j$ by a flat line bundle, the line bundles $L_j$ take the form $L_j=L^{b_j}$ with $b_j\in\bbz^+$ where $c_1(L)=[\gro_N]$ for some positive holomorphic line bundle $L\in H^1(N,\calo^*)$.
\end{enumerate}
\end{lemma}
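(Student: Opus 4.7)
The plan is to prove the chain $(1) \Leftrightarrow (2) \Leftrightarrow (3)$ via two independent, short arguments: a lattice argument for the first equivalence and an exponential-sequence argument for the second.

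For $(1) \Leftrightarrow (2)$, I would work directly in $H^{1,1}(N,\bbr)$. Colinearity of the classes $\{[\gro_j]\}_{j=1}^{d+1}$ means they all lie on a single ray through the origin. The intersection of this ray with the lattice $H^2(N,\bbz)\cap H^{1,1}(N,\bbr)$ is an infinite cyclic sub-semigroup, so it has a primitive positive generator $[\gro_N]$, which is itself a K\"ahler class because the ray lies in the K\"ahler cone. Hence each $[\gro_j]$ equals $b_j[\gro_N]$ for a unique $b_j\in\bbz^+$. The converse is immediate since the classes $b_j[\gro_N]$ all point along the same ray.

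For $(2) \Leftrightarrow (3)$, I would invoke the exponential exact sequence $0 \to \bbz \to \calo \to \calo^* \to 0$ on $N$, whose associated long exact sequence contains
$$\mathrm{Pic}^0(N) = H^1(N,\calo)/H^1(N,\bbz) \hookrightarrow H^1(N,\calo^*) \xrightarrow{c_1} H^2(N,\bbz).$$
The Picard variety $\mathrm{Pic}^0(N)$ is exactly the kernel of $c_1$, i.e.\ the group of holomorphic line bundles with trivial first Chern class, which are the flat holomorphic line bundles. Consequently, two holomorphic line bundles have the same $c_1$ if and only if they differ by tensoring with a flat line bundle. Starting from (2), I would choose a positive holomorphic line bundle $L$ with $c_1(L)=[\gro_N]$; existence follows from the Lefschetz $(1,1)$-theorem together with the fact that $[\gro_N]$ is represented by the positive $(1,1)$-form $(1/b_1)\gro_1$. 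Then $c_1(L^{b_j})=b_j[\gro_N]=[\gro_j]=c_1(L_j)$, so $L_j\otimes L^{-b_j}$ is flat, and tensoring $L_j$ by the appropriate flat line bundle yields $L_j=L^{b_j}$. The direction $(3)\Rightarrow(2)$ is then immediate from naturality of $c_1$.

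There is no real obstacle here: both equivalences reduce to unwinding the definitions of the K\"ahler cone and the Neron-Severi lattice, plus one application of the exponential sequence. The only fussy point is verifying that the chosen generator $L$ can be taken positive, which is the one-line observation above. This matches the author's remark that the proof is easy.
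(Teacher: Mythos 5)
Your proof is correct; the paper itself offers no argument for this lemma (it is stated only with the remark that its proof is easy), and what you give is precisely the standard argument the authors evidently intend: discreteness of the lattice $\calk_{NS}(N)$ on the common ray through the origin for $(1)\Leftrightarrow(2)$, and the exponential sequence together with the Lefschetz $(1,1)$ theorem (to get a positive $L$ with $c_1(L)=[\gro_N]$) for $(2)\Leftrightarrow(3)$. One small imprecision, harmless here: the flat holomorphic line bundles on a compact K\"ahler manifold are those whose first Chern class is torsion, not only the elements of the kernel of $c_1:H^1(N,\calo^*)\to H^2(N,\bbz)$, so your identification of ``flat'' with that kernel is a slight overstatement; but the only implication you actually use is that trivial (or torsion) integral $c_1$ forces flatness, and in the direction $(3)\Rightarrow(2)$ any torsion discrepancy disappears in $\calk_{NS}(N)\subset H^2(N,\bbr)$, so the argument stands as written.
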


We have the exact sequence
$$0\longrightarrow H^1(N,\bbz)\longrightarrow H^1(N,\calo)\longrightarrow H^1(N,\calo^*)\fract{c_1}{\longrightarrow} H^2(N,\bbz)\longrightarrow H^2(N,\calo)$$
so generally the positive line bundles $L_j$ are not unique. Moreover, for positive line bundles $L\in H^1(N,\calo^*)$, the image $c_1(L)$ lies in the Neron-Severi lattice $\calk_{NS}(N)\subset H^2(N,\bbz)$. 

\begin{lemma}\label{C*sublem}
For a fiber join $M_\gw=M_1*_f\cdots *_fM_{d+1}$, the group $\bbt^{d+1}_\bbc$ of the bundle $E$ reduces to the $\bbc^*$ subgroup defined by $G_{\gra\grb}(\bfb)$ if and only if all $[\gro_j]\in\gS_{d+1}$ are colinear in $\calk_{NS}(N)$.
\end{lemma}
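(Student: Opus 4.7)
The plan is to translate the statement about reduction of structure groups directly into a statement about holomorphic line bundle isomorphism classes, and then apply Lemma \ref{fiberjoincolinear} as the bridge between line bundles and K\"ahler classes. The content is essentially a bookkeeping translation: the reduction to the $\bbc^*_\bfb$ subgroup precisely encodes that all factors $L_j^*$ are powers, with exponents $-b_j$, of a single line bundle.

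First I would prove the ``if'' direction. Suppose all K\"ahler classes $[\gro_j]$ are colinear in $\calk_{NS}(N)$. Lemma \ref{fiberjoincolinear}(3) then provides a positive holomorphic line bundle $L$ with $c_1(L)=[\gro_N]$ and positive integers $b_j$ such that, after tensoring each $L_j$ by a flat line bundle, $L_j = L^{b_j}$. Choose a trivializing open cover $\{U_\gra\}$ for $L$ with \v{C}ech cocycle $\{g_{\gra\grb}\}\in\check Z^1(\{U_\gra\},\calo^*)$. In the induced trivializations, $L_j^*=L^{-b_j}$ has cocycle $\{g_{\gra\grb}^{-b_j}\}$, and the direct sum $E=\oplus_j L_j^*$ acquires block-diagonal transition matrices which are exactly the matrices $G_{\gra\grb}(\bfb)$ of Equation \eqref{Geqn}. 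Hence the structure group of $E$ reduces from $\bbt^{d+1}_\bbc$ to the $\bbc^*_\bfb$ subgroup.

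Next I would prove the converse. Suppose the structure group of $E$ reduces to the $\bbc^*_\bfb$ subgroup defined by $G_{\gra\grb}(\bfb)$. Unwinding the definition, this says that there exist trivializations of $E$ over some open cover $\{U_\gra\}$ with respect to which all transition matrices of $E$ are diagonal and read off a single cocycle $\{g_{\gra\grb}\}$ raised to the powers $\{-b_j\}$ on the diagonal. Let $L\in H^1(N,\calo^*)$ be the holomorphic line bundle represented by $\{g_{\gra\grb}\}$. The $j$-th diagonal entry exhibits $L_j^* \cong L^{-b_j}$, hence $L_j\cong L^{b_j}$. Passing to Chern classes yields $[\gro_j]=c_1(L_j)=b_j\,c_1(L)$ for every $j$, so all $[\gro_j]$ lie on the ray through $c_1(L)$ in $\calk_{NS}(N)$, which is condition (1) of Lemma \ref{fiberjoincolinear}.

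The only subtle point is the ``tensoring by a flat line bundle'' clause in condition (3) of Lemma \ref{fiberjoincolinear}: two line bundles with the same Chern class differ by a flat bundle, and flat complex line bundles are smoothly trivial even though they need not be holomorphically trivial. Since the underlying $\bbt^{d+1}_\bbc$-bundle structure on $E$ is what is relevant for the reduction statement and for the sphere bundle $M_\gw$, this ambiguity is harmless and the transition-function argument above goes through without incident. I do not anticipate a serious obstacle beyond this cosmetic issue.
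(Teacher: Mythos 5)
Your proof is correct and takes essentially the same route as the paper's: both directions pass through Lemma \ref{fiberjoincolinear}, translating colinearity of the classes $[\gro_j]$ into the identities $L_j=L^{b_j}$ and reading the reduction off the diagonal transition matrices $G_{\gra\grb}(\bfb)$, exactly as the paper does. Your explicit treatment of the flat-twist ambiguity is a point the paper's own proof silently glosses over, and your resolution is consistent with how the lemma is used later in the paper.
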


\begin{proof}
Since $M_\gw$ is a fiber join,  there exist positive line bundles $L_j$ on $N$ such that $c_1(L_j)=[\gro_j]$. Suppose further that all $[\gro_j]$ are colinear. Then by Lemma \ref{fiberjoincolinear} there exist $b_j\in\bbz$ and a positive holomorphic line bundle $L$ such that $L_j=L^{b_j}$ for all $j=1,\ldots,d+1$. So the transition functions of $L_j$ are $g^{b_j}_{\gra\grb}$ where $g_{\gra\grb}$ are the transition functions of $L$. Thus, the transition functions of $E=\oplus_jL^*_j$ reduce to the 1-parameter subgroup $G_{\gra\grb}(\bfb)$ of Equation \eqref{Geqn}.

Conversely, if we can reduce $\bbt^{d+1}_\bbc$ to the 1-parameter subgroup $G_{\gra\grb}(\bfb)$, the transition functions will take the form in Equation \eqref{Geqn} up to reordering. Since the Sasaki manifold $M_\gw$ is a fiber join, each $L_j$ is a positive line bundle. But then the holomorphic line bundle $L$ defined by the cocyle $\{g_{\gra\grb}\}$ is also positive. So there exists a primitive K\"ahler class $[\gro_N]\in\calk_{NS}(N)$ such that $c_1(L)=[\gro_N]$ and $L_j=L^{b_j}$. It follows from Lemma \ref{fiberjoincolinear} that all $[\gro_j]\in\gS_{d+1}$ are colinear.
\end{proof}

We see that the set of Sasaki fiber joins $M_\gw$ divides naturally into 2 classes: 

(1) All the K\"ahler classes $[\gro_j]\in\gS_{d+1}$ are colinear, in which case we say that the fiber join $M_\gw$ is {\it colinear}; 
\newline\indent (2) not all K\"ahler classes $[\gro_j]\in\gS_{d+1}$ are colinear, in which case we say that the fiber join is {\it non-colinear} .

We have

\begin{proposition}\label{projprod}
Consider the fiber join $M_\gw=M_1*_f\cdots *_fM_{d+1}$ defined by the collection $\gS_{d+1}$. Then 
\begin{enumerate}
\item when all K\"ahler classes $[\gro_j]\in\gS_{d+1}$ are colinear, in which case $\gro_j=b_j\gro_N$ for some positive line bundle $L$ with $c_1(L)=[\gro_N]$ and $b_j\in\bbz^+$, the orbifold quotient $\bbp_\bfa(\oplus_{j=1}^{d+1}L^*_j)$ is a product $N\times \bbc\bbp^d[\bfa]$ if and only if $\bfb=b\bfa$  where $b=\gcd(b_1,\cdots,b_{d+1}).$ Hence, the Sasaki CR structure $(\cald_\gw,J)$ is cone decomposable;
\item when not all K\"ahler classes $[\gro_j]\in\gS_{d+1}$ are colinear, the quotient orbifold $\bbp_\bfa(\oplus_{j=1}^{d+1}L^*_j)$ is not a product for any $\bfa\in (\gt^+_{sph})^*\cap\bbq$, and $(\cald_\gw,J)$ is cone indecomposable.
\end{enumerate}
\end{proposition}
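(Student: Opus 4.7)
The plan is to reduce both parts to the algebraic question of whether the transition cocycle of $E=\oplus L_j^*$ can, possibly after a diagonal gauge transformation, be made to take values in the one-parameter subgroup $\bbc^*_\bfa\subset \bbt^{d+1}_\bbc$. In the colinear case (1), Lemma~\ref{fiberjoincolinear} allows one to take $L_j = L^{b_j}$ with $c_1(L)=[\gro_N]$, so $E$ has diagonal transition functions $G_{\gra\grb}(\bfb)=\mathrm{diag}(g_{\gra\grb}^{-b_j})$ as in \eqref{Geqn}. The orbi-bundle $\bbp_\bfa(E)\to N$ is trivial (giving the product $N\times \bbc\bbp^d[\bfa]$) precisely when the cocycle $\{G_{\gra\grb}(\bfb)\}$ is cohomologous to one landing in $\bbc^*_\bfa$.

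For the forward direction of (1), if $\bfb=b\bfa$ then $g_{\gra\grb}^{-b_j}=(g_{\gra\grb}^{-b})^{a_j}$, so $G_{\gra\grb}(\bfb)$ lies pointwise in $\bbc^*_\bfa$ with parameter $\grl_{\gra\grb}=g_{\gra\grb}^{-b}$, hence acts trivially on the fibre $\bbc\bbp^d[\bfa]$; the product structure is immediate. For the converse, a trivialization of $\bbp_\bfa(E)$ yields a diagonal gauge $\phi_\gra$ with $\phi_\gra G_{\gra\grb}(\bfb)\phi_\grb^{-1}\in\bbc^*_\bfa$, which translates to the existence of a line bundle $N$ on $N$ with $N^{a_j}\cong L_j^*$ for every $j$. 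Taking Chern classes gives $b_j[\gro_N]=-a_j c_1(N)$ for all $j$, so $c_1(N)=-c[\gro_N]$ with $c=b_j/a_j$ the same positive rational for every $j$. Writing $c=p/q$ in lowest terms, $q\mid a_j$ for all $j$, so $q\mid \gcd(\bfa)=1$, forcing $c\in\bbz^+$; taking the gcd over $j$ of $b_j=ca_j$ then yields $c=\gcd(\bfb)=b$, i.e.\ $\bfb=b\bfa$.

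For (2), the same Chern class identity $c_1(L_j)=-a_j c_1(N)$ derived from a putative product structure would force all $[\gro_j]$ to be colinear in $\calk_{NS}(N)$, contradicting the standing non-colinearity hypothesis; hence no such $N$ exists and $\bbp_\bfa(E)$ is never a product, for any $\bfa\in(\gt^+_{sph})^*\cap\bbq$. The cone (in)decomposability statements then follow by combining these orbifold results with Lemma~\ref{C*sublem}: colinearity is equivalent to the reduction of the structure group of $E$ to a one-parameter subgroup $\bbc^*_\bfb$, and this reduction realizes $M_\gw$ as a regular Sasaki join in the sense of \cite{BGO06}, which is cone decomposable per \cite{BHLT16}; non-colinearity prevents any such reduction and hence any join decomposition.

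The main obstacle I expect is the backward direction in part (1). One must argue that an orbibundle isomorphism $\bbp_\bfa(E)\cong N\times\bbc\bbp^d[\bfa]$ is actually implemented by a \emph{diagonal} gauge transformation of $E$, rather than by some more exotic fibrewise automorphism of the weighted projective space, and then extract from the local cocycle data a globally defined line bundle $N$ satisfying $N^{a_j}\cong L_j^*$ for every $j$. Both steps hinge on $E$ being a direct sum of line bundles (so its structure group is contained in the maximal torus) and on the primitivity condition $\gcd(\bfa)=1$.
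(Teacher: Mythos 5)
Your strategy is essentially the paper's: both proofs run through Lemmas \ref{fiberjoincolinear} and \ref{C*sublem} and reduce the question to whether the diagonal cocycle \eqref{Geqn} of $E=\oplus_jL_j^*$ can be placed in the one-parameter subgroup $\bbc^*_\bfa$, and your forward direction of (1) (take $\grl_{\gra\grb}=g_{\gra\grb}^{-b}$, so the induced cocycle on the weighted projectivization becomes the identity) is, up to sign conventions, exactly the paper's computation. Where you go further is the converse: the paper dismisses it in one sentence (``for any other choice of $\bfa$ the $\bbc^*_\bfa$ action does not coincide with the transition functions, and will not reduce them to the identity''), whereas you extract from a hypothetical trivialization a line bundle $\caln$ with $\caln^{a_j}\cong L_j^*$ and compare Chern classes. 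That part of your elaboration is sound: since $\gcd(a_1,\ldots,a_{d+1})=1$ the scalars $\grl_{\gra\grb}$ do satisfy the cocycle condition, so $\caln$ exists once the cocycle lies in $\bbc^*_\bfa$, and primitivity of $[\gro_N]$ together with the gcd normalization forces $\bfb=b\bfa$; the same identity $[\gro_j]=-a_j\,c_1(\caln)$ yields the contradiction in (2). So the second ``hinge'' you worry about is not actually a problem.

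The first one is, and it is a genuine gap that your proposal does not close: the claim that a product structure on $\bbp_\bfa(\oplus_jL_j^*)$ must be implemented by a \emph{diagonal} gauge transformation of $E$ conjugating the cocycle into $\bbc^*_\bfa$. An isomorphism of the orbifold bundle with $N\times\bbc\bbp^d[\bfa]$ need not be induced by any gauge transformation of $E$, and for weighted projective fibers $\mathrm{Aut}(\bbc\bbp^d[\bfa])$ is in general strictly larger than the image of the diagonal torus (it contains non-linear weighted-homogeneous automorphisms when the weights are unequal), so the reduction-of-cocycle statement is not automatic. To be fair, the paper's own converse is no more detailed at exactly this point --- it simply asserts that the product can only arise when the $\bbc^*_\bfa$ action coincides with the transition cocycle on a good cover --- so your write-up matches the paper's level of completeness at the crux while at least isolating what needs proof. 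One organizational caution: do not route the cone-decomposability conclusions through the regular-join identification, since in the paper that identification (Proposition \ref{joinsident}) is proved \emph{using} Proposition \ref{projprod}; the paper instead invokes the definition from \cite{BHLT16} directly, so that (1) gives a decomposable quasiregular structure $\xi_\bfa$ with $\bfb=b\bfa$ in the cone, and (2) rules such structures out.
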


\begin{proof}
Case (1): By Lemma \ref{fiberjoincolinear} we can write $L_i=L^{b_i}$ where $L$ satisfies $c_1(L)=[\gro_N]$. So we are interested in the weighted projectivization
\begin{equation}\label{proja}
\bbp_\bfa(\oplus_{j=1}^{d+1}L^*_j)=\bigl((\oplus_{j=1}^{d+1}L^*_j)\setminus\{0\}\bigr)/\bbc^*_\bfa.
\end{equation}
The $\bbc^*_\bfa$ action on $\oplus_{j=1}^{d+1}L^*_j$ is given by 
\begin{equation}\label{bfaaction}
(v_1,\ldots,v_{d+1})\mapsto(\grl^{a_1}v_1,\ldots,\grl^{a_{d+1}}v_{d+1})
\end{equation}
for some $\grl\in\bbc^*$ and $a_j\in\bbz^+$. By Lemma \ref{C*sublem} the transition functions for the vector bundle $\oplus_{j=1}^{d+1}L^*_j$ take the form of the diagonal matrix $(\grg^{b_1},\cdots,\grg^{b_{d+1}})$. Since these $\bbc^*$ actions commute, we have a global $\bbc^*_\bfa$ action on $\oplus_{j=1}^{d+1}L^*_j$, so the projectivization given by Equation \eqref{proja} is well defined.

Note that if we choose $\bfa$ such that $\bfb=b\bfa$, then we can choose the matrix of the $\bbc^*_\bfa$ action to coincide with the transition function matrix, by choosing $\grl=\grg^b$ (restricting to an open set of a good cover, for example). This reduces the induced transition functions on the projectivization $\bbp_\bfa(\oplus_{j=1}^{d+1}L^*_j)$ to the identity. Thus, it is the product
$$\bbp_\bfa(\oplus_{j=1}^{d+1}L^*_j)=N\times \bbc\bbp^d[\bfa].$$
Furthermore, for any other choice of $\bfa$ the $\bbc^*_\bfa$ action does not coincide with the transition functions, and will not reduce them to the identity.

Case (2): Since the $[\gro_j]\in\gS_{d+1}$ are not all colinear, the group of the bundle $E$ does not reduce to a $\bbc^*$ subgroup by Lemma \ref{C*sublem}, but only to a $\bbt^k_\bbc$ subgroup for $k=2,\ldots,d+1$. But then the action \eqref{bfaaction} cannot coincide with the transition functions when restricted to open sets. So the projective group does not reduce to the identity.
\end{proof}

\begin{remark}\label{Mjrem}
Note that the unit sphere bundle of $L^*_j$ is identified with $M_j$ and is the Sasaki submanifold of $M$ obtained by setting $r_i=0$ for $i\neq j$. The fact that for the Sasaki case the submanifolds $M_j$ are all Sasaki force the integers $a_j$ in the projectivization to be all positive integers.
\end{remark}

As described in \cite{BGS06} the $T^{d+1}$ action gives rise to the (unreduced) Sasaki cone $\gt^+_{sph}$ on $M$. It follows from Proposition 4.2 of \cite{Yam99} that any other Reeb vector field $\xi_{\bfa'}$ of the form \eqref{Reebfield} with $\bfa$ replaced by $\bfa'\in(\bbr^+)^{d+1}$ is an element of $\gt^+_{sph}$. It follows that 

\begin{proposition}\label{construct}
The contact structure $\cald_\bfa=\ker\eta_\bfa$ defined by Equation \eqref{conform} is independent of $\bfa$ with $\xi_\bfa\in\gt^+_{sph}$, and only depends on the regular Sasaki manifolds $M_j$, or equivalently on the collection $\gS_{d+1}$ (up to order). Accordingly, we denote the underlying contact structure by $\cald_\gw$.
\end{proposition}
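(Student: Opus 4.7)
The plan is to invoke Proposition~4.2 of \cite{Yam99} together with the defining property of the Sasaki cone \eqref{sascone}. First, fix any reference weight $\bfa_0\in(\bbr^+)^{d+1}$; Theorem~\ref{Yamthm} then produces a Sasakian structure $\cals_{\bfa_0}=(\xi_{\bfa_0},\eta_{\bfa_0},\Phi_{\bfa_0},g_{\bfa_0})$ on $M_\gw$ whose underlying contact CR structure I denote $(\cald_{\bfa_0},J)$. Proposition~\ref{imyam} identifies the Lie algebra $\gt_{d+1}$ of the fiberwise torus $\bbt^{d+1}$ as a subalgebra of the automorphism algebra of $\cals_{\bfa_0}$, thereby singling out the subcone $\gt^+_{sph}:=\gt^+_{sph}(\cald_{\bfa_0},J)\subset\gt_{d+1}$ via \eqref{sascone}.

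Next, Proposition~4.2 of \cite{Yam99} asserts that for every $\bfa'\in(\bbr^+)^{d+1}$ the vector field $\xi_{\bfa'}$ of \eqref{Reebfield} lies in $\gt^+_{sph}(\cald_{\bfa_0},J)$. By the defining property \eqref{sascone} of the Sasaki cone, every vector in $\gt^+_{sph}(\cald_{\bfa_0},J)$ is the Reeb vector field of a Sasakian structure whose underlying contact CR structure is $(\cald_{\bfa_0},J)$; in particular the contact bundle determined by $\xi_{\bfa'}$ coincides with $\cald_{\bfa_0}$. Hence $\cald_{\bfa'}=\cald_{\bfa_0}$ for arbitrary $\bfa',\bfa_0\in(\bbr^+)^{d+1}$, so $\cald_\bfa$ is independent of $\bfa$, and we may write $\cald_\gw$ for this common contact bundle.

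Finally, the fiber join construction depends only on the holomorphic line bundles $L_j$, which are determined up to flat twists by the K\"ahler classes $[\gro_j]\in\gS_{d+1}$; such flat twists do not alter the underlying smooth contact bundle. Any permutation of the labels $j=1,\ldots,d+1$ induces a bundle automorphism of $E=\oplus_j L^*_j$ that is a contactomorphism of $(M_\gw,\cald_\gw)$, so the contact bundle depends only on $\gS_{d+1}$ up to order. The main conceptual subtlety in carrying this out is reconciling the explicit $\bfa$-dependence of the 1-form $\eta_\bfa$ with the $\bfa$-independence of its kernel $\ker\eta_\bfa$; this is the essential content of Yamazaki's Proposition~4.2, which realizes the family $\{\cals_\bfa\}$ as a smooth deformation of Sasakian structures within a single Sasaki cone $\gt^+_{sph}$ over a fixed contact manifold $(M_\gw,\cald_\gw)$.
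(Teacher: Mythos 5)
Your argument is essentially the paper's own: the paper likewise deduces this proposition from Proposition 4.2 of \cite{Yam99}, which places every Reeb field $\xi_{\bfa'}$ of the form \eqref{Reebfield} in the fixed subcone $\gt^+_{sph}$, and then concludes that $\ker\eta_\bfa$ is a common contact structure $\cald_\gw$ depending only on the $M_j$, with your remarks on flat twists and permutations merely making explicit the dependence on $\gS_{d+1}$ up to order, which the paper leaves implicit. The only caution is that, exactly as in the paper, the real content that the specific forms $\eta_{\bfa'}$ all determine the same contact structure is carried by the citation to Yamazaki (a Reeb vector field alone does not determine its contact form or its kernel), so your step ``the contact bundle determined by $\xi_{\bfa'}$ coincides with $\cald_{\bfa_0}$'' is compressed in the same way the paper's ``It follows that'' is.
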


\subsection{The Colinear Case}
We consider the special case when $\gro_j=b_j\gro_N$ for some K\"ahler form on $N$. Proposition \ref{projprod} says that this will be a product precisely when $\bfb=b\bfa$. We now compute the induced K\"ahler form on $N\times \bbc\bbp^d[\bfa]$.

\begin{proposition}\label{projKahform}
The K\"ahler form on $N\times \bbc\bbp^d[\bfa]$ induced by the quasiregular Sasakian structure $\cals_\bfa=(\xi_\bfa,\eta_\bfa,\Phi_\bfa,g_\bfa)$ on the fiber join $M_\gw$ is $b\gro_N+\gro_\bfa$. Equivalently, the transverse K\"ahler structure on $M_\gw$ is  
\begin{equation}\label{transsymp}
\pi^*_\bfa\bigl(b\gro_N+\gro_\bfa\bigr)=d\eta_\bfa
\end{equation}
where $\eta_\bfa$ is given in Equation \eqref{conform}. 

\end{proposition}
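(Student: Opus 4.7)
The plan is to compute $d\eta_\bfa$ directly from \eqref{sumsymp}, split it into a horizontal piece pulled back from $N$ and a vertical piece descending to $\bbc\bbp^d[\bfa]$, and then use the colinearity assumption together with the sphere constraint $\sum_j r_j^2 = 1$ to simplify each.

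First I would invoke Lemma \ref{fiberjoincolinear} to write $L_j = L^{b_j}$ with $c_1(L) = [\gro_N]$, and (patchwise) $\eta_j = b_j \eta_L$ for a connection 1-form $\eta_L$ whose curvature is $\pi^*\gro_N$, so that $d\eta_j = b_j \pi^*\gro_N$. Substituting into the first summand of \eqref{sumsymp} gives
\[
\sum_{j=1}^{d+1}\frac{1}{a_j}r_j^2\, d\eta_j \;=\; \Big(\sum_{j=1}^{d+1} \frac{b_j}{a_j}\, r_j^2\Big)\,\pi^*\gro_N,
\]
and by Proposition \ref{projprod}(1) we are in the product case precisely when $\bfb = b\bfa$. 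Then $b_j/a_j = b$ is constant and the sphere relation $\sum_j r_j^2 = 1$ collapses the bracket to the constant $b$, giving the horizontal contribution $b\,\pi^*\gro_N$.

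Next I would rewrite the remaining term $2\sum_j a_j^{-1}\, r_j\,dr_j\wedge(\eta_j + d\theta_j)$ as
\[
\sum_{j=1}^{d+1}\frac{1}{a_j}\,d(r_j^2)\wedge \eta_j \;+\; \sum_{j=1}^{d+1}\frac{1}{a_j}\,d(r_j^2)\wedge d\theta_j.
\]
For the first sum, patchwise substitution $\eta_j = b_j \eta_L$ together with $b_j/a_j = b$ produces $b\, d\big(\sum_j r_j^2\big)\wedge \eta_L$, which vanishes globally on account of $\sum_j r_j^2 = 1$. The second sum is, on each fiber $F_x\approx S^{2d+1}$, exactly the transverse K\"ahler form of the standard weighted Sasakian structure with weight $\bfa$ on the sphere, and this descends along the fiberwise $S^1_\bfa$-quotient in \eqref{bundiag} to the orbifold K\"ahler form $\gro_\bfa$ on $\bbc\bbp^d[\bfa]$. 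Summing the two pieces yields $d\eta_\bfa = \pi_\bfa^*\bigl(b\gro_N + \gro_\bfa\bigr)$.

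The main subtlety is the global well-definedness of the individual pieces $\eta_j$ and $d\theta_j$, each of which is defined only on trivializations (only the combination $\eta_j + d\theta_j$ is intrinsically defined on $L_j^*\setminus\underline{0}$). The cancellation of the cross term $\sum_j a_j^{-1} d(r_j^2)\wedge \eta_j$, made possible by the product condition $\bfb = b\bfa$ together with $\sum_j r_j^2 = 1$, is exactly what permits the clean splitting of $d\eta_\bfa$ into a pullback from $N$ plus a pullback from $\bbc\bbp^d[\bfa]$. A cover-by-cover check then confirms that the globally defined form on $N \times \bbc\bbp^d[\bfa]$ obtained this way equals $b\gro_N + \gro_\bfa$, yielding the identity in the statement.
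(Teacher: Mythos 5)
Your argument is correct, but it takes a genuinely different route from the paper's. You compute $d\eta_\bfa$ directly from \eqref{sumsymp} in local trivializations: writing $\eta_j$ patchwise as $b_j$ times a local connection potential for $\gro_N$, the product condition $\bfb=b\bfa$ together with $\sum_j r_j^2=1$ collapses the horizontal sum to $b\,\pi^*\gro_N$, kills the cross term $\sum_j a_j^{-1}d(r_j^2)\wedge\eta_j$, and leaves the purely fiberwise term $\sum_j a_j^{-1}d(r_j^2)\wedge d\theta_j$, which by the normalization of $\gro_\bfa$ used in the paper (the form induced on $\bbc\bbp^d[\bfa]$ by the weighted Sasakian structure on each fiber) descends to $\gro_\bfa$; the gluing you defer to a cover-by-cover check does go through, since on overlaps the angular coordinates shift by $b_j$ times a common gauge function, so the discrepancy is again a multiple of $d\bigl(\sum_j r_j^2\bigr)=0$ precisely because $\bfb=b\bfa$. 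The paper argues indirectly instead: it posits that the induced K\"ahler form is $l_1\gro_N+l_2\gro_\bfa$ with $l_1,l_2$ positive, restricts $\cals_\bfa$ to the submanifolds $M_i$ (setting $r_j=0$ for $j\neq i$) to obtain $d\eta_\bfa|_{M_i}=a_i^{-1}d\eta_i=a_i^{-1}b_i\pi_i^*\gro_N$, concludes $l_1=b_i/a_i=b$ via Proposition \ref{projprod}, and gets $l_2=1$ from the fact that $\gro_\bfa$ is induced by the fiberwise weighted structure. Your computation is longer and needs care with the patchwise-defined pieces $\eta_j$ and $d\theta_j$, but it buys two things: it does not presuppose that the induced form has the shape $l_1\gro_N+l_2\gro_\bfa$, and it makes explicit that the same identity $\bfb=b\bfa$ which yields the product quotient in Proposition \ref{projprod} is exactly what permits the splitting of $d\eta_\bfa$ into the two pullbacks; the paper's argument is shorter and avoids trivialization bookkeeping at the cost of that ansatz.
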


\begin{proof}
The K\"ahler form on the product $N\times \bbc\bbp^d[\bfa]$ takes the form $l_1\gro_N+l_2\gro_\bfa$ for some positive integers $l_1,l_2$. We have the following commutative diagram
\begin{equation}\label{cartfib}
\xymatrix{
M_i \ar[ddr]_{\pi_i} \ar[dr]^{\tpi_i} \ar[r]_\gri  & M_\gw \ar[d]^{\pi_\bfa}  \\
& N\times \bbc\bbp^d[\bfa] \ar[d]^{pr_1}  \\
& N}
\end{equation}
where $pr_1$ and the $\pi'$s are the obvious projections, and $\gri$ is the natural inclusion obtained by setting $r_j=0$ for $j\neq i$. It follows from Equation \eqref{conform} that the restriction of the Sasaki structure $\cals_\bfa=(\xi_\bfa,\eta_\bfa,\Phi_\bfa,g_\bfa)$ on $M_\gw$ to the submanifold $M_j$ is the Sasakian  structure $a_j^{-1}\cals_j$ not $\cals_j$. Indeed, we have $d\eta_\bfa|_{M_i}=a_i^{-1}d\eta_i$, so by Diagram \eqref{cartfib}
$$\tpi_i^*(l_1\gro_N+l_2\gro_\bfa)=d\eta_\bfa|_{M_i}=a_j^{-1}d\eta_i =a_i^{-1}b_i \pi_i^*\gro_N$$
which implies $l_1=\frac{b_i}{a_i}$ and $\tpi_i^*\gro_\bfa=0$. But then Proposition \ref{projprod} gives $l_1=b$. We also know that the K\"ahler form $\gro_\bfa$ on $\bbc\bbp^d[\bfa]$ is induced by the weighted Sasakian structure $\cals_\bfa$ on each fiber $S^{2d+1}$; hence, $l_2=1$.

\end{proof}

\subsection{The Non-colinear Case}
In the case where the $\gro_j$ are not generated by a single K\"ahler form $\gro_N$, there exists at least two line bundles $L_1$ and $L_2$ whose transition functions are independent, say $\grg_1$ and $\grg_2$; they generate a 2-torus $\bbt^2$ that does not reduce to a circle action.

The contact bundle is $\cald_\gw=\ker\eta_\bfa$ for all $a\in (\gt^+_{sph})^*$ and the real line bundle generated by $\xi_\bfa$ is denoted by $\bbr\xi_\bfa$. It is convenient to decompose the tangent bundle of $M$ into horizontal and vertical parts as $TM=\calh + \calv$. We then see that 
$\bbr\xi_\bfa\subset \calv$ and $\calh\subset \cald_\bfa$ and we have the decompositions
$$TM=\calh+\calv/\bbr\xi_\bfa+\bbr\xi_\bfa,\qquad \cald_\gw=\calh+\calv/\bbr\xi_\bfa.$$
In fact, Yamazaki constructs a K-contact structure $(\xi_\bfa,\eta_\bfa,\Phi_\bfa,g_\bfa)$ with integral symplectic forms $\gro_j$ satisfying $\pi_j^*\gro_j=d\eta_j$ as long as the 2-form $\sum_{j=1}^{d+1}\frac{r_j^2}{a_j}\pi^*\gro_j$ is non-degenerate. However, in the Sasaki case this non-degeneracy condition is automatic.

\subsection{Topology of the Projectivization}
For each $j$ the principal $S^1$ bundle $M_j$ over $N$ is classified by homotopy classes of maps $[N,BS^1]=[N,\bbc\bbp^\infty]$. Since the fiber join is constructed from the split complex vector bundle $E=\oplus_{j}L^*_j$, the group of the bundle reduces to the maximal complex torus $\bbt^{d+1}_\bbc=(\bbc^*)^{d+1}$ as we have previously discussed. So equivalence classes of such complex vector bundles are in one-to-one correspondence with homotopy classes of maps $[N,B(S^1)^{d+1}]=[N,(BS^1)^{d+1}]=[N,(\bbc\bbp^\infty)^{d+1}]$. The cohomology ring of the latter is the polynomial ring $\bbz[x_1,\ldots,x_{d+1}]$ where $x_j=c_1(\calo(1))$ is the universal first Chern class of the dual of the tautological bundle. Being the universal classes each $x_j$ pullsback to a negative integral K\"ahler class $-b_j[\gro_j]$.

Consider now the projectivization $\bbp(\oplus_jL^*_j)$ which is defined by projecting the bundle $E$ fiberwise. So the fibers of $\bbp(\oplus_{j}L^*_j)$ are projective spaces $\bbc\bbp^d$. That is, $\bbp(\oplus_{j}L^*_j)$ is a $\bbc\bbp^d$ bundle over $N$. The transition functions for $\bbp(\oplus_{j}L^*_j)$ are equivalences classes $\bar{\calg}$ of matrices where $\calg'\sim \calg$ if and only if there exists $\grl\in\bbc^*$ such that $\calg'=\grl\calg$. Since $c_1(L_j)=-b_j[\gro_j]$, we see that the cohomology ring is (cf. \cite{BoTu82} page 270) 
$$H^*(\bbp(\oplus_{j=1}^{d+1}L^*_j),\bbz)=H^*(N,\bbz)[x]/\Bigl(\prod_{j=1}^{d+1}(x+b_j[\gro_j])\Bigr).$$ 
Note that $x$ is a 2-class and in the denominator the coefficient of $x^{d+1-k}$ is the $kth$ elementary symmetric function $\grs_k(b_1[\gro_1],\cdots,b_{d+1}[\gro_{d+1}])$. 
So $x^{d+1}$ is a linear combination of $\{1,x,\cdots,x^d\}$ with coefficients in $H^*(N,\bbz)$ which are precisely the Chern classes of $E$. This gives
$$x^{d+1}=-\sum_{k=1}^d\grs_k(b_1[\gro_1],\cdots,b_{d+1}[\gro_{d+1}])x^{d+1-k}.$$

If we consider the unit sphere bundle $M$ in $\oplus_jL^*_j$ with its canonical contact structure $\cald_\gw$, the projectivized bundle $\bbp(\oplus_jL^*_j)$ coincides with the quotient of $M$ by the $S^1$ action generated by the regular Reeb vector field $\xi_1$ on $M$ obtained by choosing $\bfa=(1,\cdots,1)$.  Note in the special case that $c_1(L_j)=b_j[\gro_N]$ for projective bundle $\bbp(\oplus_{j=1}^{d+1}L^*_j)$ we can, without loss of generality,  order the $b_j$s as $b_1\leq b_2\leq \cdots\leq b_{d+1}$ and then take $b_1=1$ in which case $\bbp(\oplus_{j=1}^{d+1}L^*_j)$ becomes $\bbp(\BOne\oplus \oplus_{j=2}^{d+1}(L^*)^{b_j})$ where $c_1(L)=[\gro_N]$.

\subsection{The First Chern Class of $\cald_\gw$}
The first Chern class $c_1(\cald)$ of the contact bundle $\cald$ is a fundamental invariant of contact structures. For the fiber join we can express $c_1(\cald_\gw)$ in terms of invariants on $N$.

\begin{proposition}\label{c1yam}
Let $M_\gw$ be the unit sphere bundle in the vector bundle $E=\oplus_{j}L^*_j$ over $N$ obtained as the fiber join $M_\gw=M_1*_f\cdots *_f M_{d+1}$ with its canonical family of Sasakian structures with contact bundle $\cald_\gw$. Then 
$$c_1(\cald_\gw)=\pi_M^*\bigl(\sum_{j=1}^{d+1}c_1(L^*_j)+c_1(N)\bigr)=\pi_M^*\bigl(-\sum_{j=1}^{d+1}[\gro_j]+c_1(N)\bigr).$$
where $[\gro_j]$ are the K\"ahler classes in $\gS_{d+1}\subset \calk_{NS}(N)$ defined up to order.
\end{proposition}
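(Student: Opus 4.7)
The plan is to identify the first Chern class of the contact distribution with that of a familiar bundle on $N$ by passing to the Kähler cone of the Sasakian structure and using the standard splitting of the tangent bundle of a vector bundle total space. Specifically, by Remark \ref{conerem} the metric cone $C(M_\gw) = M_\gw\times \bbr^+$ is canonically identified with the complement of the zero section in $E=\oplus_{j=1}^{d+1}L_j^*$. Since $E$ is a holomorphic vector bundle on $N$, its total space carries a canonical complex structure and there is a standard isomorphism of holomorphic vector bundles
$$TE \;\cong\; \pi_E^*\,TN \;\oplus\; \pi_E^*\,E,$$
with $\pi_E : E\to N$ the bundle projection. Restricting this along the inclusions $M_\gw\hookrightarrow C(M_\gw) \hookrightarrow E$ yields, as complex vector bundles over $M_\gw$,
$$TC(M_\gw)\big|_{M_\gw} \;\cong\; \pi_M^*\bigl(TN \oplus E\bigr).$$

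Next I would split off the trivial complex line corresponding to the Reeb and radial directions. By construction the Sasakian structure of Theorem \ref{Yamthm} is the one whose Kähler cone is $(C(M_\gw),J_{\mathrm{cone}})$, and the CR structure $(\cald_\gw,J)$ is obtained by the restriction of $J_{\mathrm{cone}}$ to the horizontal distribution to the cone flow. The cone structure pairs the radial vector field $\partial_r$ with the Reeb field $\xi_\bfa$ into a trivial complex line subbundle $\bbr\partial_r\oplus \bbr\xi_\bfa \cong \underline{\bbc}$ of $TC(M_\gw)|_{M_\gw}$, and its complex orthogonal complement is exactly $\cald_\gw$. Consequently,
$$\cald_\gw \oplus \underline{\bbc} \;\cong\; TC(M_\gw)\big|_{M_\gw} \;\cong\; \pi_M^*\bigl(TN \oplus E\bigr)$$
as complex vector bundles. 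Taking first Chern classes and using $c_1(L_j^*) = -c_1(L_j) = -[\gro_j]$ gives the claimed identity.

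The only nontrivial point — the main obstacle, such as it is — is verifying that the complex structure $J = \Phi_\bfa|_{\cald_\gw}$ inherent to the Sasaki CR structure really does agree with the one induced from the cone's Kähler structure, so that the $\underline{\bbc}$ splitting above is an honest splitting of complex bundles and not merely of real ones. This is built into the very definition of Sasakian geometry (the cone is Kähler, and the CR structure is cut out by $J_{\mathrm{cone}}$ on the horizontal distribution to the Euler/Reeb pair), but it should be stated explicitly via Proposition \ref{imyam} and Theorem \ref{Yamthm}: because the almost CR structure on $\cald_\gw$ is integrable and the constructed metric is precisely the one making $C(M_\gw)$ Kähler, the identification of $J$ with the restriction of $J_{\mathrm{cone}}$ is automatic. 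Once this is in place, the computation is purely functorial and independent of the choice of $\bfa\in(\bbr^+)^{d+1}$, consistent with Proposition \ref{construct}.
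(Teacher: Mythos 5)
Your proposal is correct and is essentially the paper's own argument: both identify the K\"ahler cone with $E\setminus\{0\}$, decompose the tangent bundle there into $\pi^*TN$ plus the fiber directions, split off the trivial complex line spanned by the radial and Reeb fields so that its complement is $\cald_\gw$, and then take $c_1$ (the paper packages the fiber part as the Milnor--Stasheff complement bundle $\cale$, with $\cale\oplus\underline{\bbc}\cong\pi^*E$, and uses $c_1(\cale)=\pi_0^*c_1(E)$, valid since $d\geq 1$). The one point where the paper does more work is exactly the point you flag as "automatic" --- matching the cone complex structure with that of $E_0$ under the identification, where their sign conventions for $\theta_j$ produce a conjugation that must be checked not to affect the identification of $\cald_\gw$ with $\pi^*TN\oplus\cale$ --- so your outline is fine, but that comparison is the step the paper actually verifies rather than asserts.
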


\begin{proof}
First we set $E_0=E-\{\text{zero section}\}$ and we identify the zero section with $N$. We let $\pi_0:E_0\ra{1.5} N$ denote the restriction of $\pi:E\ra{1.5} N$ to $E_0$.  Let $h$ be a Hermitian metric on $E$. Following Milnor and Stasheff \cite{MiSt74} we construct a ``canonical'' complex $d$-plane bundle $\cale$ over $E_0$ as follows. A point of $E_0$ is specified by a fiber $F$ of $E$ together with a non-zero complex vector $v$ in $F$. Then the fiber of $\cale$ over $v\in E_0$ is the orthogonal complement of $v$ in the vector space $F$.

Now since $M$ is the unit sphere bundle in $E$ and hence in $E_0$, we can identify $E_0$ with the cone $C(M)=M\times\bbr^+$ by thinking of a point of $M$ as the pair $(x,u)$ with $x\in N$ and $||u||=1$. Then the pair $(x,v)$ with $||v||=r$ gives the triple $(x,\frac{v}{||v||},r)$ as a point of $C(M)$ with $u=\frac{v}{||v||}$. In terms of the coordinates $(r_j,\theta_j)$ on $E_0$ we identify the vector $v$ with complex coordinates $z_j=r_je^{i\theta_j}$ and $r=\sqrt{\sum_{j=1}^{d+1}r_j^2}$.

Let $\tilde{\cald}$ denote the pullback of $\cald$ to $C(M)$. We claim that under the identification of $C(M)$ and $E_0$, $\tilde{\cald}$ is identified with $\pi_0^*TN+\cale$ as real vector bundles. To see this we note that at the point $p=(x,\frac{v}{||v||},r)$ in the cone $C(M)$ the vector $\Psi_p=r(\partial_r)_p$ is perpendicular to the sphere $S^{2d+1}$ in the fiber $\pi_0^{-1}(x)\subset E_0=C(M)$. Now fix a Sasakian structure $\cals=(\xi,\eta,\Phi,g)$ on $M$ in the family with underlying contact structure $\cald$. This induces a polarized K\"ahlerian structure $(d(r^2\eta),\tilde{g}=dr^2+r^2g),I,\xi)$ on $C(M)$. Using the complex structure $I$ on $C(M)$ we obtain a complex vector $\Psi_p+I\Psi_p=\Psi_p-i\xi_p$. This gives the Reeb vector field $-\xi$ which belongs to the conjugate Sasakian structure $(-\xi,-\eta,-\Phi,g)$. This implies that the complex structure $I$ on $C(M)$ is the complex conjugate to the complex structure on $E_0$ under the identification described above. 

The orthogonal complement of $\Psi_p-i\xi_p$ in $T_pC(M)^\bbc=T_pE_0^\bbc$ with respect to the Hermitian metric $\tilde{g}$ is $\tilde{\cald}$. Thus, we can identify $\tilde{\cald}$ with $\pi_0^*TN+\cale$. This identification is independent of the choice of Sasakian structure in the family since all members correspond to the same contact  bundle $\cald$. 

With the above identification and the definition of Chern classes given in \cite{MiSt74}, we have for $n\geq 1$
$$\sum_{j=1}^{d+1}c_1(L^*_j)=c_1(\oplus_{j=1}^{d+1}L^*_j)=c_1(E)=(\pi_0^*)^{-1}c_1(\cale)=(\pi_0^*)^{-1}c_1(\tilde{\cald})-c_1(TN).$$
This implies $c_1(\tilde{\cald})=c_1(\cale)+\pi_0^*c_1(TN))$. But $c_1(L^*_j)=-[\gro_j]$.
\end{proof}

\begin{remark}\label{totchern}
This proof works for the $k$th Chern class when $2k< 2d+1$, since then $\pi^*_0:H^{2k}(N,\bbz)\longrightarrow H^{2k}(E_0,\bbz)$ is an isomorphism. This gives the $k$th Chern class in terms of the $k$th elementary symmetric polynomial, viz
\begin{equation}\label{ckeqn}
c_k(\cald)=\pi^*_M\bigl(\grs_k(-[\gro_1],\ldots,-[\gro_{d+1}]) +c_k(N)\bigr).
\end{equation}
If $d_0=d_\infty=0$ it does not hold for $c_2$ in which case $c_2$ is the Euler class of the real bundle $E_\bbr$.
\end{remark}

We can rewrite the equation of Proposition \ref{c1yam} in terms of the ordering described previously. 
\begin{corollary}\label{c1cor}
For the general fiber join $M_\gw$ we have
\begin{enumerate}
\item $\pi_M^*:H^2(N,G)\ra{1.6} H^2(M_\gw,G)$ is an isomorphism for any coefficients $G$.
\item $c_1(\cald_\gw)=$
$$\pi_M^*\Bigl(-\bigl(\grs_1(b_1,\ldots,b_{s_1})[\gro_1]+\cdots +\grs_1(b_{s_{r-1}+1},\ldots,b_{s_{r-1}+s_r})[\gro_r]\bigr) +c_1(N)\Bigr),$$
where $\grs_1$ is the first elementary symmetric function.
\item $w_2(M_\gw)=\pi^*_M\bigl(w_2(E)+w_2(N)\bigr).$
\item $M_\gw$ is a spin manifold if and only if $E$ and $N$ are either both spin or both non-spin.
\end{enumerate}
\end{corollary}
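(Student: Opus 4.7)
The plan is to derive all four parts from Proposition \ref{c1yam} together with the Leray-Serre spectral sequence and the mod-$2$ reduction of Chern classes; nothing novel is required. For (1), I would apply Leray-Serre with coefficients in an arbitrary abelian group $G$ to the sphere bundle $S^{2d+1}\hookrightarrow M_\gw\longrightarrow N$. Since $d\geq 1$ the fiber has $H^p(S^{2d+1},G)=0$ for $0<p\leq 2d$, so in total degree $2$ the $E_2$-page collapses to the single entry $H^2(N,G)$, making the edge homomorphism $\pi_M^*$ an isomorphism. This is simply Proposition \ref{fibjointop} run with general coefficients.

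For (2), I would regroup the sum $\sum_{j=1}^{d+1}[\gro_j]$ of Proposition \ref{c1yam} according to the ordering that defines $\gw$: the $d+1$ K\"ahler classes split into $r$ blocks, the $k$-th of which consists of $s_k$ copies of a primitive $[\gro_k]$ rescaled by positive integers $b_1,\ldots,b_{s_k}$, and summing within each block yields $\grs_1(b_1,\ldots,b_{s_k})[\gro_k]$. Assembling the blocks delivers the displayed formula. This step is purely bookkeeping.

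For (3) and (4), the essential input is the identity $w_2(M)\equiv c_1(\cald)\pmod 2$ valid on any contact manifold: the splitting $TM=\cald\oplus\bbr\xi$ yields $w_2(M)=w_2(\cald)$, and the symplectic, hence almost complex, structure on $\cald$ identifies $w_2(\cald)$ with the mod-$2$ reduction of $c_1(\cald)$. Reducing the formula of Proposition \ref{c1yam} mod $2$ and using $c_1(E)=\sum_{j}c_1(L_j^*)=-\sum_{j}[\gro_j]$ together with $c_1(X)\bmod 2=w_2(X)$ for any complex bundle $X$ then gives (3). For (4), $M_\gw$ is spin iff $w_2(M_\gw)=0$; by (1) applied with $G=\bbz_2$ the pullback $\pi_M^*$ is injective on $H^2$, so the vanishing is equivalent to $w_2(E)=w_2(N)$ in $H^2(N,\bbz_2)$, i.e.\ to $E$ and $N$ sharing a common second Stiefel-Whitney class.

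No step presents a real obstacle; the points of care are the use of $d\geq 1$ in (1), which is part of the standing fiber join hypothesis, and the fact that the phrase ``both spin or both non-spin'' in (4) is to be read in the strong sense $w_2(E)=w_2(N)$, which is exactly what the argument produces.
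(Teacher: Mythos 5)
Your proposal is correct and matches the paper's (very terse) argument: the paper deduces (1) from the Gysin sequence of the sphere bundle using $d\geq 1$, which is the same computation you carry out via the Leray--Serre spectral sequence, and it treats (2)--(4) as immediate consequences of Proposition \ref{c1yam} by regrouping the classes and reducing mod $2$, exactly as you do. Your remark that ``both spin or both non-spin'' in (4) should be read as $w_2(E)=w_2(N)$ is the right reading and is what the injectivity of $\pi_M^*$ on $H^2(\cdot,\bbz_2)$ from (1) yields.
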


\begin{proof}
Since $d\geq 1$ item (1) follows from the Gysin sequence while items (2),(3), and (4) are immediate.
\end{proof} 

In the colinear case it follows directly that 
\begin{equation}\label{colinc1}
c_1(\cald_\bfb)=\pi_M^*\bigl(-|\bfb|[\gro_N]+c_1(N)\bigr)
\end{equation}

\subsection{Relation between Fiber Joins and Regular Sphere Joins}
In this section we prove that a colinear fiber join is equivalent to a regular sphere join.
We refer to the join $M\star_{l_1,1}S^{2d+1}_{\bfw}$ as studied in \cite{BoTo14a}, but here with a $2d+1$ dimensional sphere, as a {\it regular sphere join}.

\begin{proposition}\label{joinsident}
Let $M$ be the Sasaki manifold which as a principal $S^1$ bundle over $N$ has Euler class $[\gro_N]$ and let 
$$M_{\bfb}=M_{1}*_f\fract{k~times}{\cdots}*_fM_{d+1}$$ 
be a colinear fiber join where $M_{j}=M/\bbz_{b_j}$ has Euler class $b_j[\gro_N]$ for $j=1,\dots,d+1$. Then the fiber join $M_\bfb$ can be identified with the regular sphere join $M\star_{l,1}S^{2d+1}_{\bfw}$, where $\bfb=b\bfw$ and $l=b=\gcd(b_1,\ldots,b_{d+1})$.
\end{proposition}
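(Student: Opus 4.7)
The plan is to realize both manifolds as the same free $S^1$-quotient of $M \times S^{2d+1}$, and then to verify that the induced Sasaki CR structures coincide.

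First I would use Lemma \ref{fiberjoincolinear}(3) to write $L_j = L^{b_j}$, where $L$ is the positive holomorphic line bundle on $N$ with $c_1(L) = [\gro_N]$ and $M$ is the total space of its associated principal $S^1$-bundle. Then the complex vector bundle $E = \oplus_{j=1}^{d+1} L_j^*$ is identified with the associated bundle $M \times_{S^1} \bbc^{d+1}$, with $S^1$ acting diagonally on $\bbc^{d+1}$ by weights $-\bfb = (-b_1, \ldots, -b_{d+1})$. Since this action is unitary, restriction to the unit sphere bundle yields
\[
M_\bfb \;=\; M \times_{S^1} S^{2d+1} \;=\; (M \times S^{2d+1}) / S^1_\phi,
\]
where $S^1_\phi$ denotes the free action $\phi_\theta(m, z) = (e^{i\theta}\cdot m,\, e^{-i\theta \bfb} z)$.

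Next I would spell out the regular sphere join. By its definition in \cite{BGO06, BoTo14a}, the join $M \star_{l_1, l_2} S^{2d+1}_\bfw$ is the quotient of $M \times S^{2d+1}$ by the $S^1$-action generated by $l_2 \xi_M - l_1 \xi_\bfw$, where $\xi_\bfw$ is the weighted Reeb field on $S^{2d+1}$ with weights $\bfw$. Setting $l_1 = b$, $l_2 = 1$, and $\bfw = \bfb / b$, this generator becomes $\xi_M - b\,\xi_{\bfb/b} = \xi_M - \xi_\bfb$, whose flow is precisely the action $\phi$ above. Hence the underlying manifolds of $M_\bfb$ and $M \star_{b,1} S^{2d+1}_\bfw$ are canonically the same, the $S^1_\phi$-action being free because the $S^1$-action on $M$ is free.

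Finally, both Yamazaki's construction and the regular join construction produce a Sasaki CR structure on this common quotient by descent from the product Sasaki data $\cals_M \times \cals_\bfw$ on $M \times S^{2d+1}$. In either case the contact distribution is the horizontal subspace of $S^1_\phi$ inside $\ker(\eta_M + \eta_\bfw)$, and the transverse complex structure is determined by the lifted complex structure on $\pi^* TN$ together with the standard toric CR structure on the fiber $S^{2d+1}$ (cf.\ Proposition \ref{imyam} and Theorem \ref{Yamthm}). Hence the two Sasaki CR structures agree, establishing the equivalence in the sense of Definition \ref{SasCRequiv}. The main obstacle in executing this plan is aligning the various conventions for signs, normalizations, and the parameterization of the Reeb field between \cite{Yam99} and \cite{BoTo14a}; once the correspondences $\bfb = b\bfw$, $l_1 = b$, $l_2 = 1$ are fixed, this amounts to a direct comparison of Equation \eqref{conform} with the join contact form.
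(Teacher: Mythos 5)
Your proposal is correct in outline but takes a genuinely different route from the paper. The paper works \emph{downstairs}: by Propositions \ref{projprod} and \ref{projKahform} the colinear fiber join has quasi-regular quotient $N\times\bbc\bbp^d[\bfw]$ with K\"ahler form $b\gro_N+\gro_\bfw$, the regular join $M\star_{l,1}S^{2d+1}_\bfw$ has the same quotient with form $l\gro_N+\gro_\bfw$, so $l=b$ matches the transverse K\"ahler structures; since this alone does not pin down the Sasakian structure when $b_1(M)\neq 0$ (citing \cite{Noz14}), the paper then chooses the connection $1$-forms to coincide. You instead work \emph{upstairs}, exhibiting both manifolds as the quotient of $M\times S^{2d+1}$ by the same circle action (weights $(1,-\bfb)$, equivalently the flow of $\xi_M-b\,\xi_\bfw$), which is cleaner and automatically sidesteps the Nozawa ambiguity, because both structures are then induced from the single connection form $\eta_M$. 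What your plan buys is this canonical identification; what it costs is the verification you defer at the end, which is genuinely the crux: Yamazaki's construction is not \emph{defined} by descent from $\cals_M\times\cals_\bfw$, so one must check that \eqref{conform} pulls back to (a positive multiple of) the join form. This is routine under the hypothesis $M_j=M/\bbz_{b_j}$, which gives $p_j^*\eta_j=b_j\eta_M$; one then finds $q^*\eta_{\BOne}=\bigl(\sum_j w_j r_j^2\bigr)\bigl(b\,\eta_M+\eta_\bfw\bigr)$, so the kernels and the fiberwise-plus-horizontal complex structures agree. Two small corrections to fold in when you execute this: the relevant product form is $l_1\eta_M+l_2\eta_\bfw=b\,\eta_M+\eta_\bfw$, not $\eta_M+\eta_\bfw$ (for $b>1$ these have different kernels), and the comparison works on the nose only for Yamazaki's regular form $\bfa=(1,\ldots,1)$; the forms $\eta_\bfa$ for other $\bfa$ have different kernels (they are only equivalent via a fiberwise ellipsoid rescaling), so the identification should be stated for $\eta_\BOne$ and then transported through the Sasaki cone.
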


\begin{proof}
Since the fiber join $M_\bfb$ is colinear, it has a product projectivization $N\times \bbc\bbp^d(\bfw)$ of the Reeb vector field defined by $\xi_\bfw$ precisely when $\bfb=b\bfw$ by (1) of Proposition \ref{projprod}, and by Proposition \ref{projKahform} the K\"ahler form on $N\times \bbc\bbp^d(\bfw)$ is $b\gro_N+\gro_\bfw$.

Now one easily sees from \cite{BoTo14a} that the regular $S^{2d+1}_\bfw$ join also has the projectivization $N\times \bbc\bbp^d(\bfw)$ with K\"ahler form $l\gro_N+\gro_\bfw$. Thus, to identify the principal $S^1$ bundles we must take $l=b$. This identifies the transverse K\"ahler structures of the fiber join $M_\bfb$ and the regular sphere join $M\star_{l,1}S^{2d+1}_{\bfw}$. However, this will not uniquely determine a Sasakian structure \cite{Noz14} up to isomorphism unless the first Betti number $b_1(M)=0$. Nevertheless, we can always choose the connection 1-forms to coincide which will identify the joins.
\end{proof}

Recall \cite{BHLT16} that a Sasaki CR structure $(\cald,J)$ is {\it cone decomposable} if there is a decomposable Sasakian structure $\cals_\bfa\in \gt^+$. This implies by construction that $\cals_\bfa$ is quasiregular and its quotient orbifold is a product. Hence, by (2) of Proposition \ref{projprod} a non-colinear fiber join is cone indecomposable. Thus, we have
\begin{corollary}\label{fiberjoindecomp}
A fiber join $M_\gw$ is cone decomposable if and only if it is colinear.
\end{corollary}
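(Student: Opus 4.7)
The plan is to observe that Corollary \ref{fiberjoindecomp} is essentially a repackaging of the two cases of Proposition \ref{projprod} in the language of cone (in)decomposability. Thus the proof reduces to (a) exhibiting a single decomposable Sasakian structure in $\gt^+$ in the colinear case, and (b) quoting the already-proved assertion of Proposition \ref{projprod}(2) for the converse.

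For the ``if'' direction, suppose $M_\gw$ is colinear. By Lemma \ref{fiberjoincolinear} there is a primitive K\"ahler class $[\gro_N]\in\calk_{NS}(N)$ and positive integers $b_1,\ldots,b_{d+1}$ with $[\gro_j]=b_j[\gro_N]$. Set $b=\gcd(b_1,\ldots,b_{d+1})$ and $\bfa=\bfb/b\in(\bbz^+)^{d+1}$, so that $\gcd(a_1,\ldots,a_{d+1})=1$ and $\bfb=b\bfa$. Since $\bfa\in(\gt^+_{sph})^*\cap\bbz^+$, the corresponding quasiregular Reeb field $\xi_\bfa$ lies in $\gt^+_{sph}\subseteq\gt^+$, and Proposition \ref{projprod}(1) identifies its orbifold quotient with the product $N\times\bbc\bbp^d[\bfa]$. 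This exhibits a decomposable Sasakian structure $\cals_\bfa\in\gt^+$, which is precisely the definition of cone decomposability from \cite{BHLT16}.

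For the ``only if'' direction (equivalently, the contrapositive: non-colinear implies cone indecomposable), the content is already encoded in the second half of Proposition \ref{projprod}(2). Indeed, that statement asserts that in the non-colinear case no rational Reeb ray in $\gt^+_{sph}$ can produce a product quotient, and explicitly declares $(\cald_\gw,J)$ to be cone indecomposable. Hence nothing further is required.

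The only point worth sanity-checking is that the cone indecomposability asserted in Proposition \ref{projprod}(2) truly rules out decomposable structures coming from \emph{any} quasiregular Reeb field in $\gt^+$, not merely from the subcone $\gt^+_{sph}$. This is where the main subtlety would lie if one tried to prove the corollary from scratch, but it has already been handled inside the proof of Proposition \ref{projprod}, so no additional argument is needed here.
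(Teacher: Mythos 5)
Your proof is correct and follows essentially the same route as the paper: the paper derives the corollary by combining the definition of cone decomposability from \cite{BHLT16} (a decomposable structure $\cals_\bfa\in\gt^+$ is by construction quasiregular with product quotient orbifold) with the two cases of Proposition \ref{projprod}, exactly as you do. The subtlety you flag about quasiregular Reeb fields in $\gt^+$ versus the subcone $\gt^+_{sph}$ is handled no differently in the paper, which likewise simply invokes the cone-indecomposability conclusion stated in Proposition \ref{projprod}(2).
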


We can now rephrase Proposition \ref{projprod} as
\begin{proposition}\label{projprod2}
A fiber join $M_\gw$ has a quasiregular Reeb vector field $\xi_\bfa\in\gt^+_{sph}$ whose quotient is the product orbifold $N\times \bbc\bbp^1[\bfa]$ if and only if it is cone decomposable. Furthermore, if the Picard number of $N$ is $1$ any fiber join $M_\gw$ over $N$ is cone decomposable.
\end{proposition}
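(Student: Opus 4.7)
The plan is to reduce Proposition \ref{projprod2} essentially to a bookkeeping exercise combining Proposition \ref{projprod} with Corollary \ref{fiberjoindecomp}, together with a short Picard-rank argument at the end.

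For the forward direction of the equivalence, suppose $M_\gw$ has a quasiregular Reeb vector field $\xi_\bfa \in \gt^+_{sph}$ whose quotient orbifold is the product $N\times \bbc\bbp^d[\bfa]$. Then by definition the Sasakian structure $\cals_\bfa$ in the Sasaki cone is decomposable (its transverse K\"ahler quotient splits as a product), so $(\cald_\gw,J)$ is cone decomposable in the sense of \cite{BHLT16}. For the reverse direction, assume $(\cald_\gw,J)$ is cone decomposable. Corollary \ref{fiberjoindecomp} immediately yields that the fiber join is colinear, so $[\gro_j]=b_j[\gro_N]$ for some primitive $[\gro_N]\in\calk_{NS}(N)$ and $b_j\in\bbz^+$. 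Setting $b=\gcd(b_1,\ldots,b_{d+1})$ and $\bfa=(b_1/b,\ldots,b_{d+1}/b)\in(\bbz^+)^{d+1}$, so that $\bfb=b\bfa$, part (1) of Proposition \ref{projprod} produces exactly the product orbifold quotient $N\times \bbc\bbp^d[\bfa]$, and $\xi_\bfa$ is the required quasiregular Reeb vector field in $\gt^+_{sph}$.

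For the Picard-number statement, I would argue as follows. The Picard number $\rho(N)$ is by definition the rank of the Neron-Severi group $NS(N)\subset H^2(N,\bbz)$, and the K\"ahler classes of $N$ live in the open K\"ahler cone $\calk(N)\subset NS(N)\otimes\bbr$. When $\rho(N)=1$, the real vector space $NS(N)\otimes\bbr$ is one-dimensional, so $\calk(N)$ is a single open ray; in particular any two integral K\"ahler classes $[\gro_i],[\gro_j]$ in $\calk_{NS}(N)$ are positive rational multiples of a common primitive generator, hence colinear in the sense of Lemma \ref{fiberjoincolinear}. Thus every fiber join $M_\gw$ over $N$ is automatically colinear, and Corollary \ref{fiberjoindecomp} then yields cone decomposability.

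The only mildly delicate point is the first direction of the equivalence: one must be sure that the notion of ``cone decomposable'' used in \cite{BHLT16} is satisfied merely by exhibiting one quasiregular $\xi_\bfa\in\gt^+$ with a product quotient, rather than requiring something stronger about the full multifoliate decomposition of $\cald_\gw$. This is, however, precisely the definition recalled just before Corollary \ref{fiberjoindecomp}, so no genuine obstacle arises; the real content of the proposition is already encoded in Proposition \ref{projprod} and Lemma \ref{fiberjoincolinear}, and the present statement is essentially a repackaging convenient for later use.
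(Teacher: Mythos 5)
Your argument is correct and matches the paper's own treatment: the paper gives no separate proof, presenting the proposition as a rephrasing of Proposition \ref{projprod} combined with Corollary \ref{fiberjoindecomp} and Lemma \ref{fiberjoincolinear}, which are exactly the ingredients you assemble, and your Picard-number argument (rank one Neron--Severi forces all integral K\"ahler classes to be colinear) is the intended one. The only small remark is that the forward implication is most safely closed via the contrapositive of part (2) of Proposition \ref{projprod} (a non-colinear fiber join has no product quotient for any $\bfa$), which gives colinearity and hence cone decomposability, rather than by reading decomposability of $\cals_\bfa$ directly off the product orbifold quotient; but this uses the same results you already invoke, so no genuine gap arises.
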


\begin{remark}\label{convrem}
Proposition \ref{joinsident} says that a colinear fiber join is equivalent to a regular fiber join of the form $M\star_{l_1,1} S^{2d+1}_\bfw$; nevertheless, there are sphere bundles that are regular joins with $l_2>1$. So sphere bundles that are regular joins  are more general than colinear fiber joins. Here is a family of counterexamples giving regular joins that are not fiber joins.

\begin{example}\label{Ypqex}
The well known $Y^{p,q}$ first studied in \cite{GMSW04a} and described as a regular $S^3_\bfw$ join in \cite{BoTo14a} are such counterexamples for all $p>1$. Indeed Example 6.8 of \cite{BoTo14a} shows that 
$$Y^{p,q}= S^3\star_{l_1,l_2}S^3_\bfw$$
where
$$ l_1=\gcd(p+q,p-q),\qquad l_2=p,\qquad \bfw=\frac{1}{l_1}(p+q,p-q),$$ 
and it is diffeomorphic to $S^2\times S^3$ for $1\leq q\leq p$ with $\gcd(p,q)=1$. Since $l_2=p>1$, $Y^{p,q}$ is a regular $S^3$ join but not a fiber join.
\end{example}
\end{remark}

\section{Extremal, CSC, and Einstein Sasaki Metrics on Fiber Joins}
In this section we will apply Proposition \ref{joinsident} and known existence results to explore examples of fiber joins that admit Sasaki structures that are extremal, CSC, or even Sasaki Einstein (SE). We begin by describing important admissible conditions from \cite{ACGT08}.

\subsection{The Admissible Conditions and proof ot Theorem \ref{gennoncothm}}\label{admsect}
For a fiber join as defined in Theorem \ref{Yamthm} we have that the complex manifold arising as the quotient of the regular Reeb vector field $\xi_1$ 
is equal to $\bbp\left(\oplus_{j} L^*_j\right) \rightarrow N$. Thus, in some cases this will be an {\em admissible} projective bundle as defined in 
\cite{ACGT08}. Specifically, following Section 1.2 of \cite{ACGT08} this happens exactly when the following all hold true:
\begin{enumerate}
\item The base $N$ is a local product of K\"ahler manifolds $(N_a,\Omega_a)$, $a \in \cala \subset \bbn$, where $\cala$ is a finite index set.
\item There exist $d_0, d_\infty \in \bbn \cup \{0\}$, with $d=d_0+d_\infty +1$, such that $E_0:= \oplus_{j=1}^{d_0+1} L^*_j$ and
$E_\infty := \oplus_{j=d_0+2}^{d_0+d_\infty +2} L^*_j$ are both projectively flat hermitian holomorphic vector bundles.
This would, for example, be true if $L^*_j= L_0$ for $j=1,...,d_0+1$ and $L^*_j= L_\infty$ for $j=d_0+2,...,d_0+d_\infty+2$, where $L_0$ and $L_\infty$ are some holomophic line bundles. That is, $E_0=L_0\otimes \bbc^{d_0+1}$ and $E_\infty = L_\infty\otimes\bbc^{d_\infty+1}$. 
More generally,
$c_1(L^*_1) =  \cdots = c_1(L^*_{d_0+1})$ and $c_1(L^*_{d_0+2})= \cdots = c_1(L^*_{d_0+d_\infty+2})$
would be sufficient.
\item $\frac{c_1(E_\infty)}{d_\infty+1}-\frac{c_1(E_0)}{d_0+1}= \sum_{a\in \cala}  [\epsilon_a\Omega_a]$, where $\epsilon_a=\pm 1$.
\end{enumerate}
The K\"ahler cone of the total space of an admissible bundle $\bbp\left( E_0\oplus E_\infty\right) \rightarrow N$ has a subcone of so-called {\em admissible K\"ahler classes} (defined in Section 1.3 of \cite{ACGT08}). This subcone has dimension $|\cala|+1$ and, in general, this is not the entire K\"ahler cone. However, by Remark 2 in \cite{ACGT08}, if $b_2(N_a)=1$ for all $a\in \cala$ and $b_1(N_a) \neq 0$ for at most one $a\in \cala$, then the entire K\"ahler cone is indeed admissible.
\begin{definition}
Any fiber join $M_\gw$ where the quotient of the regular Reeb vector field $\xi_1$ 
is admissible will also be called {\bf admissible}.
If further every K\"ahler class in the K\"ahler cone of this quotient is admissible, then we call $M_\gw$ {\bf super admissible}.
\end{definition}

We now recall Proposition 11 of \cite{ACGT08}, which in turn is just a slight generalization of the the works by Guan \cite{Gua95}, Hwang \cite{Hwa94}, and Hwang-Singer \cite{HwaSi02}.

\begin{proposition} \cite{ACGT08} \label{acgtpr11}
Suppose that $S=\bbp(E_0 \oplus E_\infty) \rightarrow N$ is admissible where
$N$ is a local K\"ahler product of non-negative CSC metrics. Then every admissible
K\"ahler class contains an (admissible) extremal K\"ahler metric.
\end{proposition}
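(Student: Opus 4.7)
The plan is to reduce existence of an extremal K\"ahler metric in a given admissible class on $S=\bbp(E_0\oplus E_\infty)\to N$ to a scalar boundary-value problem via the admissible ansatz of Section 1.3 of \cite{ACGT08}. Once the class is fixed, an admissible K\"ahler metric is parametrized by a single smooth ``momentum profile'' $\Theta:[-1,1]\to \bbr$ subject to the positivity condition $\Theta>0$ on $(-1,1)$ and to the boundary/smoothness conditions $\Theta(\pm 1)=0$ and $\Theta'(\pm 1)=\mp 2$, which together guarantee that the global K\"ahler metric extends smoothly across the two zero sections corresponding to $\bbp(E_0)$ and $\bbp(E_\infty)$.

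Next I would write the scalar curvature of the admissible metric as a function of $z$ alone:
\begin{equation*}
\mathrm{Scal}(z)\,=\,\sum_{a\in\cala}\frac{2\,d_a\,s_a}{1+r_a z}\;-\;\frac{(p_c(z)\,\Theta(z))''}{p_c(z)},
\end{equation*}
where $p_c(z)=\prod_{a\in\cala}(1+r_a z)^{d_a}$ is the characteristic polynomial of the admissible structure, $d_a=\dim_\bbc N_a$, and $s_a$ is the (constant) scalar curvature of the factor metric $g_a$. The extremal condition is equivalent to requiring $\mathrm{Scal}(z)$ to be affine in $z$, and this in turn reduces to the linear fourth-order ODE
\begin{equation*}
(p_c(z)\,\Theta(z))''\;=\;p_c(z)\Bigl(\sum_{a\in\cala}\tfrac{2\,d_a\,s_a}{1+r_a z}-(\alpha z+\beta)\Bigr),
\end{equation*}
where $(\alpha,\beta)\in\bbr^2$ is uniquely determined by the compatibility of the four endpoint conditions. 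Integrating twice from $z=-1$ using $\Theta(-1)=0$ and $\Theta'(-1)=2$, then solving the resulting nondegenerate $2\times 2$ linear system coming from the endpoint conditions at $z=+1$, produces a unique smooth $\Theta$ satisfying all four boundary requirements. The whole remaining issue is whether this uniquely determined $\Theta$ is positive on the open interval $(-1,1)$.

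Positivity of $\Theta$ is the main obstacle and is the only step where the hypothesis that $N$ is a local K\"ahler product of non-negative CSC factors is used essentially. Following the strategy of Guan \cite{Gua95}, Hwang \cite{Hwa94}, and Hwang-Singer \cite{HwaSi02} in the rank-one case, extended to arbitrary split $E_0\oplus E_\infty$ in \cite{ACGT08}, I would set $F(z):=p_c(z)\,\Theta(z)$ and exhibit $F$ as a sum of two contributions on $[-1,1]$ that are manifestly non-negative once $s_a\ge 0$ for every $a\in\cala$: the ``base curvature'' piece, arising from the double integral of $p_c(t)\sum_a 2 d_a s_a/(1+r_a t)$, is non-negative term by term since $p_c$ and each factor $1+r_a t$ are strictly positive on the interval; the ``affine'' piece, involving the double integral of $p_c(t)(\alpha t+\beta)$, has a sign on $[-1,1]$ that is controlled by the very moment identities over $p_c$ which define $(\alpha,\beta)$, after a Fubini-type rewriting. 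Putting these together yields $F\ge 0$ on $[-1,1]$ with vanishing of exactly the prescribed order at $\pm 1$; hence $\Theta>0$ on $(-1,1)$, and the corresponding admissible K\"ahler metric is smooth, well-defined, and extremal, completing the proof.
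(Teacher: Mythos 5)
Your overall framework (the admissible ansatz reducing everything to a momentum profile $\Theta$, the extremal condition forcing $F=p_c\Theta$ to solve a linear ODE with four endpoint conditions determining $(\alpha,\beta)$ and the integration constants uniquely, and positivity of $\Theta$ on $(-1,1)$ as the sole remaining issue) is exactly the setup the paper recalls in Appendix A; note that the paper itself does not prove this proposition but cites it as Proposition 11 of \cite{ACGT08}, indicating only that the positivity is established there by a \emph{root-counting} argument in the style of Guan, Hwang, and Hwang--Singer.

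The genuine gap is in your positivity step. First, a structural point: for $\rank E_0>1$ or $\rank E_\infty>1$ the relevant products and sums run over the \emph{extended} index set $\hat{\cala}$, which includes the fiber contributions with $r_0=1$, $s_0=d_0+1$ and $r_\infty=-1$, $s_\infty=-(d_\infty+1)$; since $s_\infty<0$, your claim that the ``base curvature'' piece is non-negative term by term already fails for the $a=\infty$ term, so the hypothesis $s_a\ge 0$ for $a\in\cala$ does not make that piece manifestly non-negative. Second, and more seriously, the ``affine'' piece coming from the double integral of $p_c(t)(\alpha t+\beta)$ has no definite sign: for a genuinely extremal (non-CSC) solution $\alpha\neq 0$, so $\alpha t+\beta$ changes sign on $[-1,1]$, and the assertion that its contribution is controlled ``by the very moment identities which define $(\alpha,\beta)$, after a Fubini-type rewriting'' is not an argument — the moment identities fix $(\alpha,\beta)$ but do not by themselves yield pointwise non-negativity of that integral. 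The actual proof cannot proceed by splitting $F$ into two separately non-negative pieces; it works by observing that $F_{extr}$ is a polynomial of controlled degree with zeros of prescribed multiplicity $d_0+1$ and $d_\infty+1$ at $\mp 1$ and with $F''=\bigl(\prod_{a\in\hat{\cala}}(1+r_a\gz)^{d_a-1}\bigr)P(\gz)$, where the interpolation values $P(-1/r_a)=2d_as_ar_a\prod_{j\ne a}(1-r_j/r_a)$ have signs controlled by $s_a\ge 0$; one then shows that an interior zero of $F_{extr}$ would, via Rolle's theorem, force $F''$ (equivalently $P$) to have more sign changes or roots than its degree and interpolation data permit. Without this counting argument (or an equivalent substitute), the crucial conclusion $\Theta>0$ on $(-1,1)$ is not established.
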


In Appendix \ref{APB} we shall describe what we mean by {\em admissible metrics}.
As a corollary to Proposition \ref{acgtpr11} we then have the following result which then proves Theorem \ref{gennoncothm}.

\begin{proposition}\label{gennonco}
Suppose $M_\gw$ is a super admissible fiber join where the local K\"ahler product $N$ is a product of K\"ahler metrics of non-negative constant scalar curvature. Then the Sasakian structure defined by the regular Reeb vector field $\xi_1$ has an extremal representative which is contained in an open set of extremal Sasakian structures.
\end{proposition}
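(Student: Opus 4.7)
The strategy is to pass from the regular Sasakian structure with Reeb vector field $\xi_1$ on $M_\gw$ to its K\"ahler quotient, apply the ACGT08 existence result there, and then pull back and use the openness theorem in the Sasaki cone. First I would record that, as explained in Section~\ref{orbquot} (see Diagram \eqref{bundiagreg}), the regular Reeb vector field $\xi_1\in\gt^+_{sph}$ has projective algebraic quotient
\[
S=\bbp(\oplus_{j=1}^{d+1}L^*_j)=\bbp(E_0\oplus E_\infty)\longrightarrow N,
\]
and under the hypothesis that $M_\gw$ is admissible this is precisely an admissible projective bundle in the sense of \cite{ACGT08}. The transverse K\"ahler form $d\eta_{\xi_1}$ descends to a K\"ahler form $\gro_S$ on $S$; and by the super admissibility assumption its class $[\gro_S]\in H^{1,1}(S,\bbr)$ lies in the admissible K\"ahler cone.

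Next I would invoke Proposition \ref{acgtpr11}. Since $N$ is a local K\"ahler product of non-negative CSC K\"ahler metrics and $[\gro_S]$ is admissible, this proposition provides an extremal K\"ahler metric $\tilde g$ on $S$ in the class $[\gro_S]$ obtained via the admissible construction (cf.\ Appendix \ref{APB}). Because $\tilde g$ lies in the class determined by $d\eta_{\xi_1}$, the standard lifting procedure for quasi-regular Sasakian structures applies: one keeps the Reeb vector field $\xi_1$ and replaces $\eta_{\xi_1}$ by $\eta_{\xi_1}+d^c\varphi$ (equivalently $\Phi$ is adjusted on $\cald$) where $\varphi$ is the transverse K\"ahler potential relating $d\eta_{\xi_1}$ and $\pi^*\tilde g$'s K\"ahler form. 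Since extremality of a quasi-regular Sasakian structure is equivalent to extremality of its transverse K\"ahler quotient (the $(1,0)$-gradient of the Sasaki scalar curvature being transversely holomorphic is, modulo the constant shift $s^T_g=s_g+2n$, the same condition as for $\tilde g$), the resulting Sasakian structure on $M_\gw$ with Reeb vector field $\xi_1$ is extremal.

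Finally, having produced one extremal representative in the Sasaki cone $\gt^+(\cald_\gw,J)$, I would invoke the openness theorem of \cite{BGS06} recalled in Section~2, which asserts that the set of Reeb vector fields in $\gt^+(\cald_\gw,J)$ admitting extremal representatives is open. This yields an open neighborhood of $\xi_1$ inside $\gt^+(\cald_\gw,J)$ consisting of extremal Sasaki metrics, completing the proof. The main thing to verify carefully is the second paragraph: that the admissible extremal K\"ahler metric produced by Proposition \ref{acgtpr11} actually lies in the specific K\"ahler class $[d\eta_{\xi_1}]$ determined by $\xi_1$ (so the lifting is legitimate) and that the transverse-to-Sasaki correspondence preserves extremality; both are standard but need to be stated explicitly, and the super admissibility hypothesis is precisely what guarantees the first.
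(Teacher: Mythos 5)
Your proposal is correct and follows essentially the same route as the paper: the paper derives Proposition \ref{gennonco} directly as a corollary of Proposition \ref{acgtpr11}, with super admissibility serving exactly the role you identify (guaranteeing the transverse K\"ahler class of the regular quotient is admissible without having to compute it), followed by the transverse K\"ahler isotopy lift and the openness theorem of \cite{BGS06}. Your second paragraph simply makes explicit the standard quotient-to-Sasaki correspondence that the paper leaves implicit, so there is no substantive difference.
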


\begin{remark}
In \cite{ACGT08} and related papers, more specific results occur regarding the existence of smooth extremal K\"ahler metrics, CSC K\"ahler metrics and KE metrics on the total spaces of admissible projective bundles. In general, these existence results - especially for the CSC and KE cases - depend on the K\"ahler classes\footnote{Proposition \ref{gennonco} above and Proposition \ref{gencoprop} below exhibit applications of exceptions to this rule.}. Thus, it is not feasible to apply these results to the regular quotient of admissible fiber joins unless we identify the K\"ahler class of the quotient K\"ahler structure. Excluding the trivial case where $c_1(L_1)=\cdots = c_1(L_d)$ (and thus the regular quotient is a (local) product), in this paper we will only explore this in the admissible case with $d=1$.
\end{remark}

In the case $d=1$ Proposition \ref{joinsident} tells us that the colinear fiber join is identical to the regular sphere join $M\star_{l,1}S^{3}_{\bfw}$, as explored thoroughly in \cite{BoTo14a}, where $\bfb=b\bfw$ and $l=b=\gcd(b_1,b_{2})$ with $b_1\geq b_2$. This allows us to apply Theorems 1.1, 1.2, and 1.4 of \cite{BoTo14a} as follows:
\begin{theorem}\label{JGAthm}
Let $M_\gw$ be the Sasaki manifold which as a principal $S^1$ bundle over $N$ has primitive Euler class $[\gro_N]$ and let 
$$M_\gw=M_{\bfb}=M_{1}*_fM_{2}$$ 
be a cone decomposable fiber join where $M_{j}=M/\bbz_{b_j}$ has Euler class $b_j[\gro_N]$ for $j=1,2$.  Assume that the K\"ahler structure of $(N,\omega_N)$ has constant scalar curvature. Then
\begin{enumerate}
\item there exist a Reeb vector field in the Sasaki subcone $\gt^+_{sph}(\cald_\bfb,J)$ of $M_{\bfb}$ such that the corresponding ray of Sasaki structures has constant scalar curvature;
\item if the scalar curvature of $\omega_N$ is non-negative, $\gt^+_{sph}(\cald_\bfb,J)$  is exhausted by extremal Sasaki metrics;
\item assuming further that $\gro_N$ is positive K\"ahler-Einstein, there exists a Reeb vector field in the Sasaki subcone $\gt^+_{sph}(\cald_\bfb,J)$ of $M_\bfb$ such that the corresponding Sasaki metric is Sasaki-Einstein if and only if $b_1+b_2=\cali_N$. Moreover, up to equivalence the number of such SE metrics equals the number of partitions of $\cali_N$ into the unordered sum of two positive integers.
\end{enumerate}
\end{theorem}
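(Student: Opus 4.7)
The plan is to reduce the statement to the case of a regular sphere join via Proposition \ref{joinsident} and then to invoke the three main theorems of \cite{BoTo14a} in their stated form. Since the fiber join here is cone decomposable and $d=1$, Proposition \ref{joinsident} furnishes an identification
$$M_\bfb \;=\; M_1 *_f M_2 \;\approx\; M\star_{l,1}S^3_\bfw,$$
where $b=\gcd(b_1,b_2)$, $\bfw=(b_1/b,b_2/b)$, and $l=b$. Moreover, under this identification the Sasaki subcone $\gt^+_{sph}(\cald_\bfb,J)$ corresponds to the 2-dimensional subcone of the Sasaki cone of the regular $S^3$-join generated by the two fiber Reeb directions on $S^3_\bfw$. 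Thus, each statement can be extracted from the corresponding result in \cite{BoTo14a} once one translates the parameters. The only real verification is that the hypothesis there --- namely that $N$ admits a CSC (resp.\ non-negative CSC, resp.\ positive K\"ahler--Einstein) metric representing the primitive class $[\gro_N]$ --- holds, and this is exactly what is assumed.

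For (1) I would invoke Theorem 1.1 of \cite{BoTo14a}: for any regular sphere join $M\star_{l,1}S^3_\bfw$ with $(N,\gro_N)$ of constant scalar curvature, the Sasaki cone of the join contains a quasi-regular Reeb vector field whose ray is CSC. Pulling this back along the identification of Proposition \ref{joinsident} gives the asserted CSC ray in $\gt^+_{sph}(\cald_\bfb,J)$. For (2), I would appeal to Theorem 1.2 of \cite{BoTo14a}, which asserts that if in addition the scalar curvature of $\gro_N$ is non-negative, then the entire 2-dimensional Sasaki subcone is exhausted by extremal Sasaki metrics; since the parametrization identifies the two subcones, this transports to the exhaustion of $\gt^+_{sph}(\cald_\bfb,J)$ by extremal metrics.

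For (3), Theorem 1.4 of \cite{BoTo14a} gives an SE ray in $M\star_{l,1}S^3_\bfw$ precisely when the numerical condition $l|\bfw|=\cali_N$ holds, where $\cali_N$ is the Fano index of $N$ (the largest integer dividing $c_1(N)$ as an integral class). Here $l|\bfw|=b\cdot(b_1/b+b_2/b)=b_1+b_2$, so this is exactly the condition $b_1+b_2=\cali_N$. The counting of inequivalent SE metrics then comes from counting the inequivalent choices of weight vector $\bfw=(w_1,w_2)$ of coprime positive integers with $l(w_1+w_2)=\cali_N$ as one varies $l\in\bbz^+$; after unwinding the identification $(b_1,b_2)=(bw_1,bw_2)$ and reordering (the fiber join is symmetric in the two factors), this is the same as counting unordered pairs $(b_1,b_2)$ of positive integers with $b_1+b_2=\cali_N$, which is the number of partitions of $\cali_N$ into two positive summands.

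The main point of care is to verify that the equivalence relations on Sasakian structures on the two sides of Proposition \ref{joinsident} match: the quoted identification of K\"ahler quotients uniquely determines the transverse K\"ahler geometry, but when $b_1(M)\neq 0$ there is a choice of connection 1-form. As indicated in the proof of Proposition \ref{joinsident}, choosing the connection 1-forms to coincide identifies the Sasakian structures, and under this choice the Sasaki cones are canonically identified. With this matching in hand, Theorems 1.1, 1.2, and 1.4 of \cite{BoTo14a} transport verbatim and yield (1), (2), and (3).
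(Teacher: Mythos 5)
Your proposal is correct and follows essentially the same route as the paper: the paper proves Theorem \ref{JGAthm} exactly by identifying the colinear $d=1$ fiber join with the regular sphere join $M\star_{l,1}S^3_\bfw$ via Proposition \ref{joinsident} (with $\bfb=b\bfw$, $l=b=\gcd(b_1,b_2)$) and then quoting Theorems 1.1, 1.2, and 1.4 of \cite{BoTo14a}. Your additional checks --- the translation $l|\bfw|=b_1+b_2$, the partition count, and the matching of connection 1-forms when $b_1(M)\neq 0$ --- are exactly the points the paper relies on.
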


For $d=1$, admissible, and non-colinear we work out the K\"ahler class of the regular quotient in Appendix \ref{kahclass}. We will then apply this to the special case explored in Section \ref{rsprod}.

As mentioned above in the case $d>1$ known existence results here are somewhat sporadic; nevertheless we do have

\begin{proposition}\label{d>1extremal}
Let $M$ be the Sasaki manifold which as a principal $S^1$ bundle over $N$ has primitive Euler class $[\gro_N]$ and let 
$$M_{\bfb}=M_{1}*_f\cdots *_fM_{d+1}$$ 
be a cone decomposable fiber join where $M_{j}=M/\bbz_{b_j}$ has Euler class $b_j[\gro_N]$ for $j=1,\dots,d+1$. Assume that the K\"ahler structure of $(N,\omega_N)$ is extremal. Then the Sasaki structure corresponding to  the quasi-regular Reeb field $\xi_\bfa$ with $\bfa$ such that
$b\bfa=\bfb$, where $b=\gcd(b_1,\dots,b_n)$ is extremal 
(up to isotopy). Thus, as long as $(N,\omega_N)$ is extremal, both the Sasaki cone $\gt^+(\cald_\bfb,J)$ and its subcone $\gt^+_{sph}(\cald_\bfb,J)$ of $M_{\bfb}$ will always contain an open set of Sasaki extremal structures.
\end{proposition}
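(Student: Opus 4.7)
The plan is to exploit the cone decomposability to identify the orbifold quotient of $\xi_\bfa$ as a Kähler product whose factors are both extremal, then invoke the standard transverse-Kähler/Sasakian dictionary together with the openness theorem.

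First, since $\bfb = b\bfa$, part (1) of Proposition \ref{projprod} shows that the orbifold quotient of $M_\bfb$ by the $S^1_\bfa$-action generated by $\xi_\bfa$ is isomorphic, as a complex orbifold, to the product $N \times \bbc\bbp^d[\bfa]$. By Proposition \ref{projKahform} the transverse Kähler form of $\cals_\bfa$ descends to the Kähler form $b\omega_N + \omega_\bfa$ on this product, where $\omega_\bfa$ is the orbifold Kähler form on $\bbc\bbp^d[\bfa]$ induced from the weighted Sasakian reduction of the round $S^{2d+1}$ by its $\bfa$-weighted $S^1$-action.

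Second, both factors of this product carry extremal Kähler (orbifold) metrics: $(N,\omega_N)$ by hypothesis, and $(\bbc\bbp^d[\bfa],\omega_\bfa)$ because the weighted-projective reduction of the round sphere is a Bochner-flat Kähler orbifold and, in particular, extremal. A product of two extremal Kähler metrics is again extremal, since the product scalar curvature is the sum of the factor scalar curvatures and its $(1,0)$-gradient equals the sum of the factor $(1,0)$-gradients, each of which is a holomorphic vector field on its factor and hence holomorphic on the product. Thus $b\omega_N + \omega_\bfa$ is an extremal orbifold Kähler metric on $N \times \bbc\bbp^d[\bfa]$.

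Third, by the standard correspondence between extremal transverse Kähler structures on the leaf space and extremal Sasakian structures upstairs (\cite{BG05}), the Sasakian structure $\cals_\bfa$ on $M_\bfb$ is extremal up to a transverse isotopy within its transverse Kähler class (accounting for the \emph{up to isotopy} in the statement; Calabi's uniqueness ensures the extremal representative is essentially unique in its Kähler class). Finally, the openness theorem of \cite{BGS06} produces an open neighborhood $U \subset \gt^+(\cald_\bfb,J)$ of $\xi_\bfa$ whose Reeb vector fields all yield extremal Sasakian structures; since $\xi_\bfa \in \gt^+_{sph}(\cald_\bfb,J)$, the intersection $U \cap \gt^+_{sph}(\cald_\bfb,J)$ is a nonempty open subset of the spherical subcone consisting of extremal Sasakian structures. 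The main point that warrants a careful citation is the second step — namely, extremality of $(\bbc\bbp^d[\bfa],\omega_\bfa)$ for arbitrary $\bfa \in (\bbz^+)^{d+1}$ — which follows from the Bochner-flat character of weighted projective reductions of the round sphere but should be anchored to the weighted Sasakian sphere literature.
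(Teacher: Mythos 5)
Your proposal is correct and follows essentially the same route as the paper's own proof: identify the $\xi_\bfa$-quotient as $N\times\bbc\bbp^d[\bfa]$ via Propositions \ref{projprod} and \ref{projKahform}, use that the canonical (weighted, Bochner-flat) K\"ahler metric on $\bbc\bbp^d[\bfa]$ is extremal and that a product of extremal K\"ahler metrics is extremal, then lift and apply the openness theorem of \cite{BGS06}. Your explicit justification of extremality of $\omega_\bfa$ via Bochner-flatness is a welcome elaboration of what the paper states as a known fact, but it is not a different argument.
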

\begin{proof}
This result follows directly from Proposition \ref{projprod}, Proposition \ref{projKahform}, the fact that $\bbc\bbp_{\bfa}$ admits a canonical extremal K\"ahler metric, and the fact that a product of two extremal K\"ahler metrics is again extremal. Openness holds by \cite{BGS06}.
\end{proof}

\begin{remark}\label{notadmis}
Notice that we do not assume that $M_\bfb$ is admissible in Proposition \ref{d>1extremal}.
\end{remark}

\subsection{Sasaki-Einstein Metrics on Fiber Joins}
We briefly consider the Fano case with Fano index $\cali_N$. Since here $N$ is a smooth projective Fano variety of complex dimension $n$, we recall the well known result of Koboyashi and Ochia \cite{KoOc73} that $\cali_N\leq n+1$ and equality holds if and only if $N=\bbc\bbp^n$. Moreover, when $\cali_N=n$ the variety $N$ is a quadric. So the case when $N$ is smooth is quite restrictive. This is not at all so restrictive when $N$ is an orbifold \cite{BoTo18c,BoTo19a}. However, here we have the following obstructions

\begin{proposition}\label{SEprop}
Let $M_\gw=M_1\star_f\cdots \star_fM_{d+1}$ be a Sasaki fiber join with $\gS_{d+1}=\{[\gro_j]\}_{j=1}^{d+1}$, and suppose $M_\gw$ admits an SE metric. 
\begin{enumerate}
\item Then $c_1(N)=\sum_{j=1}^{d+1}[\gro_j]$. 
\item If $M_\gw$ is cone decomposable with $\gro_j=b_j\gro_N$ for some primitive K\"ahler form $\gro_N$, which is equivalent to the regular join $M\star_{l,1}S^{2d+1}_\bfw$, then
\begin{enumerate}
\item $n\geq d$ and $n=d$ if and only if $\bfw=(1,\fract{d+1}{\ldots},1)$,
\item  $\cali_N=l|\bfw|$ which implies $\cali_N\geq l(d+1)\geq 2$ and that $|\bfw|$ must divide $\cali_N$.
\item If $M_\gw$ is also admissible then 
$$\cali_N=(d_0+1)b_0+(d_\infty +1)b_\infty\leq n+1.$$
\end{enumerate}
\item If $M_\gw$ is cone indecomposable and admissible, then 
\begin{equation}\label{c10cond}
c_1(N)=(d_0+1)[\gro_0]+(d_\infty +1)[\gro_\infty].
\end{equation} 
\end{enumerate}
In particular, if $d_\infty=d_0\geq n$ there are no SE metrics for any $N$.
\end{proposition}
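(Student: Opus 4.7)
The plan is to deduce everything from the necessary condition $c_1(\cald_\gw)=0$ for the existence of a Sasaki-Einstein metric, combined with Proposition \ref{c1yam}. Since $\pi_M^*:H^2(N,\bbz)\longrightarrow H^2(M_\gw,\bbz)$ is an isomorphism by Corollary \ref{c1cor}(1), the formula
\[
c_1(\cald_\gw)=\pi_M^*\Bigl(-\sum_{j=1}^{d+1}[\gro_j]+c_1(N)\Bigr)
\]
forces $c_1(N)=\sum_{j=1}^{d+1}[\gro_j]$, which is (1). All remaining statements are essentially arithmetic consequences of this identity combined with the Kobayashi-Ochiai bound $\cali_N\leq n+1$.

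For part (2), I substitute $[\gro_j]=b_j[\gro_N]$ and use $\bfb=l\bfw$ from Proposition \ref{joinsident} to rewrite (1) as $c_1(N)=l|\bfw|[\gro_N]$; primitivity of $[\gro_N]$ in $\calk_{NS}(N)$ then yields $\cali_N=l|\bfw|$, establishing (2)(b). The inequality $l|\bfw|\leq n+1$ together with $l\geq 1$ and $|\bfw|\geq d+1$ gives $n\geq d$, and the equality case forces $l=1$ and $|\bfw|=d+1$, i.e.\ $\bfw=(1,\ldots,1)$; this is (2)(a). For (2)(c), admissibility partitions the line bundles $L_j^*$ into two blocks of sizes $d_0+1$ and $d_\infty+1$ on each of which $c_1(L_j^*)$ is constant, so the $b_j$ take only two values $b_0,b_\infty$; then $\cali_N=\sum_jb_j=(d_0+1)b_0+(d_\infty+1)b_\infty$, bounded above by $n+1$.

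Part (3) is analogous: admissibility groups the $[\gro_j]$ into two clusters of common value $[\gro_0]$ and $[\gro_\infty]$ with multiplicities $d_0+1$ and $d_\infty+1$, and substituting into (1) yields $c_1(N)=(d_0+1)[\gro_0]+(d_\infty+1)[\gro_\infty]$.

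For the final ``in particular'' clause (which tacitly assumes admissibility, since $d_0,d_\infty$ are defined only in that setting), I would assume $d_\infty=d_0\geq n$ and derive a contradiction in each of the two cases of Proposition \ref{projprod}. In the cone decomposable admissible case, (2)(c) gives $\cali_N=(d_0+1)(b_0+b_\infty)\geq 2(n+1)>n+1$, violating Kobayashi-Ochiai. In the cone indecomposable admissible case, (3) gives $c_1(N)=(d_0+1)\bigl([\gro_0]+[\gro_\infty]\bigr)$, so $(d_0+1)\mid \cali_N$ and $\cali_N\geq d_0+1\geq n+1$, forcing equality and $N\cong\bbc\bbp^n$ by Kobayashi-Ochiai; but $\bbc\bbp^n$ has a one-dimensional K\"ahler cone, so $[\gro_0]$ and $[\gro_\infty]$ would necessarily be proportional, contradicting cone indecomposability. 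The main delicate point I anticipate is keeping the dichotomy cone decomposable versus cone indecomposable aligned with the admissibility hypothesis needed to even define $d_0$ and $d_\infty$; once that bookkeeping is right, each case collapses to a short Kobayashi-Ochiai argument.
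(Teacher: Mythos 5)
Your proposal is correct and follows essentially the same route as the paper's own proof: the necessary vanishing of $c_1(\cald_\gw)$ combined with Proposition \ref{c1yam} gives (1), the chain $d+1\leq l(d+1)\leq l|\bfw|=\cali_N\leq n+1$ together with the Kobayashi--Ochiai bound gives (2)(a),(b), and the admissible block structure gives (2)(c) and (3). The only difference is that you spell out the final clause for $d_0=d_\infty\geq n$ as a two-case contradiction (index at least $2(n+1)$ in the decomposable case; $N\cong\bbc\bbp^n$ forcing colinearity in the indecomposable case), which the paper dispatches with a bare citation of \cite{KoOc73}, and your unpacking is consistent with what the paper intends.
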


\begin{proof}
A necessary condition to have a Sasaki-Einstein (SE) metric is that the real first Chern class of the contact bundle must vanish (we will ignore any torsion). Then the first statement follows immediately from Proposition \ref{c1yam} from which it also follows that $N$ must be Fano. Applying this to the colinear fiber join and using Proposition \ref{joinsident}, we have from \eqref{colinc1} that $\cali_N=|\bfb|=l|\bfw|.$ Thus, we have
\begin{equation}\label{Fanoob}
d+1\leq l(d+1)\leq l|\bfw|=|\bfb|=\cali_N\leq n+1
\end{equation}
which implies both (a) and (b). To prove (c) we note that the admissible conditions imply \eqref{c10cond}, so setting 
$$\gro_0=b_0\gro_N, \qquad \gro_\infty=b_\infty\gro_N$$
for some $b_0,b_\infty\in\bbz^+$ and some primitive K\"ahler form $\gro_N$ gives the result.
The proof of the first statement in (3)  is straightforward. The last statement then follows from \cite{KoOc73}.
\end{proof}

\begin{remark}\label{ind1rem}
In the colinear case $M_\bfb$ it follows from (2) of Proposition \ref{SEprop} that the subcone $\gt^+_{sph}$ contains no SE metric when the Fano index $\cali_N$ is $1$.
\end{remark}

\section{Applications}
We now apply the results of the previous sections to describe some explicit examples where the hypotheses of Propositions \ref{gencoprop} and \ref{gennonco} hold.

\subsection{$N=\bbc\bbp^n$}
Any fiber join $M_\bfb=M_{b_1}\star_f\cdots \star_f M_{b_{d+1}}$ over $\bbc\bbp^n$ is cone decomposable and is determined by a vector $\bfb$ with coefficients $b_j\in \bbz^+$ such that $M_j$ has Euler class $b_j[\gro_{FS}]$ and is the lens space $S^{2n+1}/\bbz_{b_j}$. The corresponding Sasaki CR structure on $M_\bfb$ is denoted by $(\cald_\bfb,J)$. Now Sasaki contact structures can be distinguished by their first Chern class for which we have
\begin{equation}\label{c1cpn}
c_1(\cald_\bfb)=(n+1-|\bfb|)\pi^*_M[\gro_{FS}].
\end{equation}
Moreover, from Proposition \ref{joinsident} it is equivalent to the regular join $S^{2n+1}\star_{b,1}S^{2d+1}_\bfw$ with $\bfb=b\bfw$ and $b=\gcd(b_1,\ldots,b_{d+1})$, and for $d=1$ the much stronger Theorem \ref{JGAthm} applies giving constant scalar curvature Sasaki metrics in $\gt^+_{sph}(\cald_\bfb,J)$ as done in \cite{BoTo14a}. When $d>1$ the best we can currently do comes from Proposition \ref{d>1extremal}. In any case we also have

\begin{proposition}\label{cpnprop}
Let $\{M_\bfb\}$ be a collection of fiber joins over $\bbc\bbp^n$ with fixed integral cohomology ring and fixed Pontrjagin classes. Then for each associated Sasaki CR structure $(\cald_\bfb,J)$ its Sasaki cone and as well as the subcone $\gt^+_{sph}(\cald_\bfb,J)$ contains an open set of extremal Sasaki metrics. If we assume also that $n\leq d$, then there are a finite number of diffeomorphism types within the collection $\{M_\bfb\}$. Hence, for some diffeomorphism type there exists a countable infinity of inequivalent Sasaki contact structures all of which have an open set of extremal Sasaki metrics.
\end{proposition}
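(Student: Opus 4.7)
First I would appeal to the fact that the Fubini--Study metric $\gro_{FS}$ on $\bbc\bbp^n$ is K\"ahler--Einstein and so in particular extremal. Proposition \ref{d>1extremal} then provides an extremal Sasakian structure at the quasi-regular Reeb vector field $\xi_\bfa\in\gt^+_{sph}(\cald_\bfb,J)$ defined by $b\bfa=\bfb$ with $b=\gcd(b_1,\ldots,b_{d+1})$. The openness theorem of \cite{BGS06} then extends this point to an open set of extremal Sasaki metrics inside $\gt^+(\cald_\bfb,J)$, and since $\xi_\bfa$ already lies in the subcone $\gt^+_{sph}(\cald_\bfb,J)$ the open set meets this subcone as well.

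For the finiteness of diffeomorphism types under $n\leq d$, the homotopy sequence \eqref{homotopy} gives $\pi_1(M_\bfb)=0$, and Lemma \ref{forlem} makes $M_\bfb$ formal. A simply connected formal space has its rational homotopy type determined by its rational cohomology ring, so the assumption of a common integral cohomology ring across the collection fixes a common rational homotopy type. Sullivan's surgery-theoretic finiteness theorem \cite{Sul77} then applies: among closed simply connected smooth manifolds of fixed dimension, rational homotopy type, and rational Pontrjagin classes, only finitely many diffeomorphism types occur. This bounds $\{M_\bfb\}$ up to diffeomorphism.

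For the remaining assertion, one first verifies that $\{M_\bfb\}$ contains an infinite subfamily realizing infinitely many distinct values of $|\bfb|=b_1+\cdots+b_{d+1}$. The ring structure of $H^*(M_\bfb,\bbz)$ and its Pontrjagin classes depend on $\bfb$ only through certain elementary symmetric polynomials in $b_1,\ldots,b_{d+1}$ (via Remark \ref{totchern} together with the Gysin sequence of \eqref{sphbun}), whereas $c_1(\cald_\bfb)=(n+1-|\bfb|)\pi_M^*[\gro_{FS}]$ records only $\grs_1(\bfb)=|\bfb|$. Moving two entries of $\bfb$ against one another preserves all higher elementary symmetric functions while varying $\grs_1$, producing infinitely many $\bfb$ with the same ring and Pontrjagin data but pairwise distinct $|\bfb|$. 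Pigeonhole against the finite output of the previous stage forces a single diffeomorphism type to carry infinitely many of the $\cald_\bfb$. Since $\pi_M^*$ is an isomorphism on $H^2$ by Corollary \ref{c1cor}(1), the first Chern class $c_1(\cald_\bfb)$ is determined up to sign by the integer $n+1-|\bfb|$, which takes infinitely many values on the subfamily; as $c_1$ is a contact invariant these contact structures are pairwise inequivalent, and by the first step each bears an open set of extremal Sasaki metrics in its Sasaki cone.

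The main obstacle is the third step: producing an explicit infinite subfamily of $\bfb$'s whose cohomology rings and Pontrjagin classes coincide while $|\bfb|$ varies. Sullivan's finiteness and the openness theorem enter as standard black boxes, and the first step is immediate; it is the concrete cohomological bookkeeping that isolates the one free parameter $\grs_1(\bfb)$ and confirms that it can range through infinitely many values while the remaining symmetric invariants are held fixed.
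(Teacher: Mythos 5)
Your first two steps are exactly the paper's argument: extremality of $\gro_{FS}$ feeds Proposition \ref{d>1extremal} (openness from \cite{BGS06}) for the extremal open set in $\gt^+$ and $\gt^+_{sph}$, and Lemma \ref{forlem} supplies the formality needed to invoke Theorem 13.1 of \cite{Sul77} for the finiteness of diffeomorphism types. For the last statement the paper does nothing like your third step: it takes the collection with fixed cohomology ring and Pontrjagin classes as the hypothesis and simply notes, via \eqref{c1cpn}, that distinct values of $|\bfb|$ give distinct $c_1(\cald_\bfb)$, hence inequivalent contact structures, which pigeonhole onto the finitely many diffeomorphism types.

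The genuine gap is in your manufactured infinite subfamily. First, the mechanism is inverted: replacing $b_1\mapsto b_1+t$, $b_2\mapsto b_2-t$ \emph{preserves} $\grs_1(\bfb)$ and changes the higher elementary symmetric functions, rather than the other way around. Second, and more seriously, the Pontrjagin data are not functions of $\grs_2,\dots,\grs_{d+1}$ alone: since $TM_\bfb\oplus\bbr\cong\pi_M^*(TN\oplus E_\bbr)$ and $p_1(E_\bbr)=c_1(E)^2-2c_2(E)$, one has $p_1(M_\bfb)=\pi_M^*\bigl((n+1)+\textstyle\sum_j b_j^2\bigr)[\gro_{FS}]^2$ with $\sum_j b_j^2=\grs_1^2-2\grs_2$, so holding the higher $\grs_k$ fixed while letting $\grs_1$ run off changes $p_1$ whenever $n\ge 2$ (where $\pi_M^*[\gro_{FS}]^2\neq 0$ by Proposition \ref{fibjointop}). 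In fact for $n\ge 2$ fixing $p_1$ fixes $\sum_j b_j^2$, which admits only finitely many positive integer vectors $\bfb$, so no infinite subfamily of the kind you describe can exist there; conversely, in the regime where the claim is cleanest, $n=1\le d$, the constraints are vacuous ($[\gro_{FS}]^2=0$ kills all Pontrjagin classes and every fiber join has the product cohomology ring), so no construction is needed at all. You should therefore drop the third step and argue as the paper does: within the given collection, infinitely many values of $|\bfb|$ yield infinitely many values of $c_1(\cald_\bfb)$ by \eqref{c1cpn} and Corollary \ref{c1cor}(1), and Sullivan finiteness forces some diffeomorphism type to carry a countable infinity of them.
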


\begin{proof} 
The first statement follows directly from Proposition \ref{d>1extremal}. The second statement follows since when $n\leq d$ Lemma \ref{forlem} implies that the minimal model \eqref{rathomsphbun} is formal, so the hypothesis of Theorem 13.1 of \cite{Sul77} is satisfied. The last statement then follows immediately from Equation \eqref{c1cpn}.
\end{proof}

\begin{remark}\label{n>drem}
On the other hand if $n>d$ the Euler class $\ge$ of the sphere bundle cannot vanish, otherwise a single odd class of degree less than one half the dimension would survive to $E_\infty$ in the Leray-Serre spectral sequence, so by Leray's Theorem $M$ could not have a Sasakian structure which it does by Theorem \ref{Yamthm}. The Euler class produces torsion $\bbz_e$ in $H^{2d+2}(M,\bbz)$ where $\ge=e[\gro_{FS}]^{d+1}$ when $e>1$.
\end{remark}

A particular case of interest is the cohomological Einstein case $c_1(\cald_\bfb)=0$.
Using (1) of Corollary \ref{c1cor} we have solutions to $c_1(\cald_\bfb)=0$ if and only if 
\begin{equation}\label{c1=0cond}
n+1=|\bfb|=\sum_{j=1}^{d+1}b_j.
\end{equation}
As stated in Proposition \ref{SEprop} there are solutions if and only if $n\geq d$ and if $n=d$ there is a unique solution, namely $b_j=1$ for all $j=1,\ldots,n+1$. Generally the number of solutions is given by the number of partitions of $n+1$ as the unordered sum of $d+1$ positive integers. 

As a specific example of (3) of Theorem \ref{JGAthm} we have 

\begin{corollary}\label{41thm}
Let $(\cald_\bfb,J)$ be a Sasaki CR structure of a rank 2 colinear fiber join on an $S^3$ bundle over $\bbc\bbp^n$. Then $\gt^+_{sph}$ admits an SE metric if and only if $b_1+b_2=n+1$. Moreover, up to equivalence the number of such Sasaki CR structures equals the number of partitions of $n+1$ into the unordered sum of two positive integers.
\end{corollary}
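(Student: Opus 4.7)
The plan is to derive this corollary as a direct specialization of Theorem \ref{JGAthm}(3) to the case $N=\bbc\bbp^n$; all the substantive work is already packaged inside that theorem, and what remains is essentially arithmetic bookkeeping.

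First, I would verify the hypotheses. The projective space $\bbc\bbp^n$ carries the Fubini--Study K\"ahler--Einstein metric $\gro_{FS}$, which is positive and whose class is the primitive integral generator of the (one-dimensional) integral K\"ahler cone $\calk_{NS}(\bbc\bbp^n)$. The principal $S^1$-bundle with primitive Euler class $[\gro_{FS}]$ is the Hopf fibration $S^{2n+1}\to\bbc\bbp^n$. In the notation of Theorem \ref{JGAthm} this identifies $M = S^{2n+1}$ and $M_j = S^{2n+1}/\bbz_{b_j}$, the standard lens space with Euler class $b_j[\gro_{FS}]$. Since the Picard number of $\bbc\bbp^n$ is $1$, Proposition \ref{projprod2} guarantees that any rank $2$ fiber join $M_\bfb = M_1 *_f M_2$ over $\bbc\bbp^n$ is automatically cone decomposable, so the hypothesis of Theorem \ref{JGAthm} is met without further assumption.

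Second, I would compute the Fano index. Since $c_1(\bbc\bbp^n) = (n+1)[\gro_{FS}]$, we have $\cali_N = n+1$. Plugging this into the first assertion of Theorem \ref{JGAthm}(3) gives the SE existence criterion $b_1 + b_2 = \cali_N = n+1$, which is the first claim of the corollary. The enumeration assertion of Theorem \ref{JGAthm}(3) then specializes to: the number of inequivalent Sasaki CR structures $(\cald_\bfb, J)$ in $\gt^+_{sph}$ admitting an SE representative equals the number of partitions of $n+1$ as an unordered sum of two positive integers, which is the second claim.

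There is no genuine obstacle in this derivation, as the two nontrivial ingredients have already been handled upstream: the actual existence of the Sasaki--Einstein metric in the appropriate ray of $\gt^+_{sph}(\cald_\bfb, J)$ (carried out in \cite{BoTo14a} via the regular sphere join identification of Proposition \ref{joinsident}), and the inequivalence of the Sasaki CR structures associated to distinct unordered pairs $\{b_1, b_2\}$ (which is not distinguished by $c_1(\cald_\bfb)$, since by \eqref{c1cpn} this class vanishes in every SE case, and instead requires the finer invariants recorded in Theorem \ref{JGAthm}(3)). Consequently the proof reduces to citing Theorem \ref{JGAthm}(3) with $\cali_N = n+1$.
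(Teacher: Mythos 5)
Your proposal is correct and matches the paper's own route: the paper introduces this corollary precisely as a specialization of Theorem \ref{JGAthm}(3), using that any fiber join over $\bbc\bbp^n$ is colinear (Picard number $1$), that $\gro_{FS}$ is positive K\"ahler--Einstein, and that $\cali_{\bbc\bbp^n}=n+1$, exactly as you do. The verification of hypotheses and the identification of $M_j$ with lens spaces via the Hopf fibration is the same bookkeeping the paper leaves implicit.
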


For $N=\bbc\bbp^1$ and $d=2$, we can apply a recent result by Legendre \cite{Leg18} and state the following result about the regular ray.
\begin{proposition}\label{LegSasprop}
Let $M$ be the Sasaki manifold which as a principal $S^1$ bundle over $\bbc\bbp^1$ has primitive Euler class $[\gro_{FS}]$ and let 
$$M_{\bfb}=M_{1}*_f M_{2}*_f M_{3}$$ 
be a (necessarily cone decomposable) fiber join where $M_{j}=S^3/\bbz_{b_j}$ has Euler class $b_j[\gro_{FS}]$ for $j=1,2,3$ where $\gro_{FS}$ denotes the standard Fubini-Study K\"ahler form. Then the Sasaki structure corresponding to  the regular Reeb field $\xi_1$ is extremal 
(up to isotopy), so both $\gt^+$ and its subcone $\gt^+_{sph}$ have open sets of extremal Sasaki metrics.
\end{proposition}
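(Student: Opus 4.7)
The plan is to identify the regular quotient of $\xi_1$ as a toric complex $3$-fold and then invoke Legendre's extremality theorem \cite{Leg18} directly in that K\"ahler class. First, since $N=\bbc\bbp^1$ has Picard number one, Proposition \ref{projprod2} forces $M_\bfb$ to be cone decomposable, so the three K\"ahler classes $[\gro_j]=b_j[\gro_{FS}]$ are colinear. Taking $\bfa=(1,1,1)$ in \eqref{Reebfield} gives the regular Reeb vector field $\xi_1$. By the regular special case of Proposition \ref{projprod}(1) (see also \eqref{bundiagreg}), the quotient of $M_\bfb$ by the $S^1$-action generated by $\xi_1$ is the smooth projective bundle
$$S\;=\;\bbp\bigl(\calo(-b_1)\oplus\calo(-b_2)\oplus\calo(-b_3)\bigr)\longrightarrow \bbc\bbp^1,$$
which is a toric complex $3$-fold (the torus of rank $3$ comes from the diagonal $T^2$ on the fibre $\bbc\bbp^2$ together with the $S^1$ on $\bbc\bbp^1$). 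Proposition \ref{projKahform} identifies the transverse K\"ahler class of $\cals_{(1,1,1)}$ on $M_\bfb$ with the explicit K\"ahler class $b[\gro_{FS}]+[\gro_{(1,1,1)}]$ on $S$, where $b=\gcd(b_1,b_2,b_3)$.

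Next I would apply Legendre's theorem from \cite{Leg18}, which yields the existence of an extremal K\"ahler metric in this K\"ahler class on the toric $3$-fold $S$. Because $\xi_1$ is regular and $M_\bfb\to S$ is a genuine principal $S^1$-bundle whose connection $1$-form is $\eta_{(1,1,1)}$ with $d\eta_{(1,1,1)}=\pi_1^*(b\gro_N+\gro_{(1,1,1)})$, the Boothby-Wang pull-back of Legendre's extremal K\"ahler metric produces an extremal Sasakian structure in the isotopy class of $\cals_{(1,1,1)}$ whose Reeb vector field is $\xi_1$.

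Finally, the openness theorem of \cite{BGS06} promotes this single extremal point to an open neighbourhood of extremal Sasakian structures inside the full Sasaki cone $\gt^+(\cald_\bfb,J)$. Since $\xi_1$ is an interior point of the sphere subcone $\gt^+_{sph}(\cald_\bfb,J)$, this neighbourhood intersects $\gt^+_{sph}(\cald_\bfb,J)$ in a relatively open set of extremal Sasaki metrics, proving the proposition.

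The main obstacle is the invocation of Legendre's theorem: one must verify that the K\"ahler class $b[\gro_{FS}]+[\gro_{(1,1,1)}]$ on $S$ is of the type covered by her result. Although $S$ is always toric, so her framework should apply, some care is needed when the three integers $b_j$ are pairwise distinct, since then $S$ is not an admissible bundle in the sense of \cite{ACGT08} and Proposition \ref{acgtpr11} does \emph{not} suffice; the whole point is that Legendre's theorem bypasses the admissibility hypothesis by using the toric structure of $S$ directly.
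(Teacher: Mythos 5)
Your overall route coincides with the paper's: identify the quotient of the regular Reeb field $\xi_1$ as $\bbp\bigl(\calo(-b_1)\oplus\calo(-b_2)\oplus\calo(-b_3)\bigr)\rightarrow\bbc\bbp^1$, invoke Legendre's result \cite{Leg18}, and finish with the openness theorem of \cite{BGS06}. There is, however, one genuine misstep: you invoke Proposition \ref{projKahform} to assert that the transverse K\"ahler class of $\cals_{(1,1,1)}$ is $b[\gro_{FS}]+[\gro_{(1,1,1)}]$ (and likewise that $d\eta_{(1,1,1)}$ is the pullback of $b\gro_N+\gro_{(1,1,1)}$). That proposition only describes the induced K\"ahler form when the quasiregular quotient is the product $N\times\bbc\bbp^d[\bfa]$, which by part (1) of Proposition \ref{projprod} happens precisely when $\bfb=b\bfa$. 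For $\bfa=(1,1,1)$ with $b_1,b_2,b_3$ not all equal, the quotient is the nontrivial projective bundle $S$ you yourself wrote down, not a product, so Proposition \ref{projKahform} gives no information about the transverse class; indeed the paper never computes this class in the present $d=2$ setting (it does so only for admissible $d=1$ joins, in Appendix \ref{kahclass}).

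Fortunately the misidentification is unnecessary rather than fatal, and deleting it lands you exactly on the paper's proof: the reason Corollary 1.6 of \cite{Leg18} is cited is that Legendre proves existence of an extremal K\"ahler metric in \emph{every} K\"ahler class of $\bbp\bigl(\calo(-b_1)\oplus\calo(-b_2)\oplus\calo(-b_3)\bigr)\rightarrow\bbc\bbp^1$, so one never needs to know which class is the transverse K\"ahler class of $\xi_1$; whatever it is, it carries an extremal representative, and the transverse (isotopy) deformation together with \cite{BGS06} yields the open sets of extremal metrics in $\gt^+$ and $\gt^+_{sph}$. Your closing worry about whether the specific class is of the type covered by her theorem dissolves for the same reason, and your observation that $S$ is not admissible in the sense of \cite{ACGT08}, so that Proposition \ref{acgtpr11} cannot be used, is precisely Remark \ref{Legrem} of the paper.
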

\begin{proof}
Since the regular quotient equals $\bbp(\calo(-b_1)\oplus\calo(-b_2)\oplus\calo(-b_3)) \rightarrow \bbc\bbp^1$ this follows directly from Corollary 1.6 in \cite{Leg18}. The key is that Legendre proves existence of extremal K\"ahler metrics in {\em every} K\"ahler class of 
$\bbp(\calo(-b_1)\oplus\calo(-b_2)\oplus\calo(-b_3)) \rightarrow \bbc\bbp^1$ and thus we have avoided the shortcomings of not knowing 
which of those classes is the K\"ahler class of the transverse K\"ahler metric.
\end{proof}

\begin{remark}\label{Legrem}
It is interesting that the proof of Proposition \ref{LegSasprop} does not use the admissible construction. In fact, the projective bundle associated to $M_\bfb$ is not admissible in the sense of \cite{ACGT08}.
\end{remark}

\begin{remark}\label{Grothrem}
By a famous theorem of Grothendieck every complex vector bundle on $\bbc\bbp^1$ splits as a sum of line bundles. So when $N=\bbc\bbp^1$ every sphere bundle with structure group $SO(d+1)$ reduces to a $(d+1)$-dimensional torus $\bbt^{d+1}$ and  admits a fiber join construction.
\end{remark}

\subsection{$N$ is a Riemann surface $\Sigma_g$ of genus $g$}
In this case all Sasaki CR structures are cone decomposable, and there are precisely two diffeomorphism types, the trivial bundle $\grS_g\times S^{2d+1}$, and the non-trivial bundle $\grS_g\tilde{\times} S^{2d+1}$ which are distinguished by their second Stiefel-Whitney class as discussed by Yamazaki. Here we consider those related to 
admissible projective bundles as described in Section \ref{admsect}, namely 
\begin{equation}\label{vbeq}
\oplus_{j=1}^{d+1}L^*_j=E_0\oplus E_\infty=L_0\otimes \bbc^{d_0+1}\oplus L_\infty\otimes\bbc^{d_\infty+1}
\end{equation}
where $L_0,L_\infty$ are line bundles that satisfy $c_1(L_0)=-b_1 [\Omega_{\Sigma_g}]$ and $c_1(L_\infty)=-b_2[\Omega_{\Sigma_g}]$ where 
$\Omega_{\Sigma_g}$  is the standard area form on $\grS_g$, which has constant scalar curvature. Then $d=d_0+d_\infty+1$, $L_0=L^*_j$ for $j=1,\dots,d_0+1$, and $L_\infty=L^*_j$ for $j=d_0+2,\dots,d_0+d_\infty+2$. Note that 
$$\frac{c_1(E_\infty)}{(d_\infty+1)}-\frac{c_1(E_0)}{(d_0+1)}=(b_1-b_2)[\Omega_{\Sigma_g}].$$
The quotient of the regular Reeb vector field equals 
$\bbp(E_0\oplus E_\infty) \rightarrow \Sigma_g $. Since the fiber join in this case is super admissible, we can apply Theorem 6 of \cite{ACGT08} to arrive at the following.
\begin{proposition}\label{gencoprop}
Let $M$ be the Sasaki manifold which as a principal $S^1$ bundle over a compact Riemann surface $\Sigma_g$ has primitive Euler class $[\grO_{\Sigma_g}]$ and let 
$$M_{\bfb}=M_{1}*_f\cdots*_fM_{d+1}$$ 
be a (necessarily cone decomposable) fiber join where $M_{j}=M/\bbz_{b_j}$ has Euler class $b_1[\grO_{\Sigma_g}]$ for $j=1,\dots,d_0+1$ and Euler class $b_2[\grO_{\Sigma_g}]$ for $j=d_0+2,\dots,d_0+ d_\infty+2$, where $d=d_0+d_\infty+1$. 
\begin{enumerate} 
\item For genus $g$ of $\Sigma_g$ equal to $0$ or $1$, the Sasaki structure on $M_\bfb$  corresponding to  the regular Reeb field $\xi_1$ is extremal 
(up to isotopy). 
\item For genus $g>1$ and $-d_0(d_0+1) \leq \frac{2(1-g)}{b_1-b_2} \leq d_\infty(d_\infty+1)$, the Sasaki structure on $M_\bfb$  corresponding to  the regular Reeb field $\xi_1$ is extremal 
(up to isotopy). 
\end{enumerate}
In both cases there is an open set of extremal Sasaki metrics in both the Sasaki cone $\gt^+$ and its subcone $\gt^+_{sph}$.
\end{proposition}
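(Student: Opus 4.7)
The plan is to recognize $M_\bfb$ as a super admissible fiber join and then apply the existence theorems for extremal K\"ahler metrics on admissible projective bundles from \cite{ACGT08} to the regular quotient $\bbp(E_0 \oplus E_\infty) \to \Sigma_g$. First I would verify admissibility with index set $\cala=\{1\}$ consisting of the single factor $\Sigma_g$: the bundles $E_0 = L_0 \otimes \bbc^{d_0+1}$ and $E_\infty = L_\infty \otimes \bbc^{d_\infty+1}$ are manifestly projectively flat, and the required identity
$$\frac{c_1(E_\infty)}{d_\infty+1} - \frac{c_1(E_0)}{d_0+1} = (b_1-b_2)[\Omega_{\Sigma_g}]$$
fits the admissibility template (with $\epsilon_1 = \pm 1$ according to the sign of $b_1-b_2$). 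Since $b_2(\Sigma_g)=1$, Remark 2 of \cite{ACGT08} ensures that every K\"ahler class on the quotient is admissible, so $M_\bfb$ is super admissible.

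For part (1), when $g=0$ or $g=1$, the Riemann surface $\Sigma_g$ admits a CSC K\"ahler metric of non-negative scalar curvature (the Fubini--Study metric for $g=0$, a flat metric for $g=1$). Proposition \ref{gennonco} therefore applies directly: an extremal K\"ahler metric exists in every admissible K\"ahler class on $\bbp(E_0 \oplus E_\infty)$, and in particular in the class corresponding to the transverse K\"ahler structure of $\xi_1$. Lifting yields an extremal Sasaki representative for $\xi_1$ (up to isotopy).

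For part (2), the base has negative scalar curvature and Proposition \ref{gennonco} no longer applies. Here I would invoke Theorem 6 of \cite{ACGT08}, which gives existence of an extremal K\"ahler metric in an admissible K\"ahler class provided a positivity condition on the associated admissible polynomial holds. In the present setting, the single base factor $(\Sigma_g,\Omega_{\Sigma_g})$ contributes scalar curvature proportional to $2(1-g)$, and the bundle scaling parameter is $b_1-b_2$; translating the abstract positivity in \cite{ACGT08} into these quantities should reduce precisely to
$$-d_0(d_0+1) \leq \frac{2(1-g)}{b_1-b_2} \leq d_\infty(d_\infty+1).$$
Under this inequality an extremal K\"ahler metric exists on $\bbp(E_0 \oplus E_\infty)$ in the K\"ahler class of the transverse structure of $\xi_1$, which lifts to an extremal Sasaki metric isotopic to the one determined by $\xi_1$. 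In both parts, the openness theorem of \cite{BGS06} upgrades this single representative to an open neighborhood of extremal Sasaki metrics in $\gt^+$; since $\xi_1 \in \gt^+_{sph} \subset \gt^+$, this neighborhood intersects the subcone $\gt^+_{sph}$ in an open set.

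The main obstacle, and the only step needing genuine computation, will be the reduction of the abstract positivity criterion of \cite{ACGT08} to the displayed inequality in (2). This requires identifying the K\"ahler class corresponding to the transverse K\"ahler structure of $\xi_1$ as an explicit admissible class (using Appendix \ref{kahclass}), then checking that the admissible polynomial of the extremal construction (as recalled in Appendix \ref{APB}) is non-negative on the relevant interval precisely when the stated bounds on $\frac{2(1-g)}{b_1-b_2}$ hold. This is a bookkeeping calculation, not a conceptual difficulty, since all the analytic work already sits inside Theorem 6 of \cite{ACGT08}.
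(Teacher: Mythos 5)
Your skeleton is the paper's: realize the regular quotient as the super admissible bundle $\bbp(E_0\oplus E_\infty)\rightarrow \grS_g$ (with $E_0=L_0\otimes\bbc^{d_0+1}$, $E_\infty=L_\infty\otimes\bbc^{d_\infty+1}$), invoke the existence results of \cite{ACGT08}, and finish with openness from \cite{BGS06}. Part (1) via Proposition \ref{gennonco} is correct and is in substance what the paper does (the paper quotes Theorem 6 of \cite{ACGT08} for both parts, but for $g=0,1$ the base has non-negative CSC so Proposition \ref{gennonco} applies equally well).

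The gap is in how you propose to close part (2). Theorem 6 of \cite{ACGT08} is not an existence statement for a single admissible class subject to a positivity check you must perform; for $g>1$, under the class-independent hypothesis $-d_0(d_0+1)\leq \tfrac{2(1-g)}{b_1-b_2}\leq d_\infty(d_\infty+1)$ (this quantity is just the normalized base scalar curvature $s$ in the admissible data, cf.\ the analogous formulas \eqref{admdata}), it guarantees an extremal metric in \emph{every} admissible K\"ahler class. Combined with super admissibility, this covers the transverse K\"ahler class of $\xi_1$ without ever identifying it -- which is exactly the point of the remark (and its footnote) following Proposition \ref{gennonco}. The step you flag as ``the main obstacle'' -- identifying the transverse class via Appendix \ref{kahclass} and checking positivity of the extremal polynomial in that specific class -- is therefore both unnecessary and, with the paper's tools, unavailable: Appendix \ref{kahclass} determines the regular transverse class only for $d=1$, whereas the proposition allows arbitrary $d_0,d_\infty$, and in any case a class-specific positivity check would yield a condition depending on the class parameter $r_a$ (generally weaker than the displayed inequality), so it would not ``reduce precisely'' to the stated bounds. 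As written, your plan stalls at that step for $d>1$; the fix is simply to quote Theorem 6 as a for-every-admissible-class statement under the stated inequality (rescaling the base form so that $\pm\Omega$ absorbs the factor $b_1-b_2$, which tacitly requires $b_1\neq b_2$; if $b_1=b_2$ the quotient is a product of CSC metrics and extremality of the regular ray is immediate). Your final openness argument in $\gt^+$ and $\gt^+_{sph}$ is fine.
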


The case $d=1 (d_0,d_\infty=0)$ was treated in detail in \cite{BoTo11,BoTo13}.

\subsection{$N$ is a product of Riemann Surfaces}\label{rsprod}
Consider $N=\grS_{g_1}\times \grS_{g_2}$ where $\grS_g$ is a Riemann surface of genus $g$, and let $M_\gw$ be a Sasaki fiber join on an $S^{2d+1}$ bundle over $\grS_{g_1}\times \grS_{g_2}$. 
We choose a complex structure on $\grS_g$ and as before let $\grO_i$ denote the standard area form on $\grS_{g_i}$.
With slight abuse of notation, we denote the pull-back of their K\"ahler classes to $H^2(N,\bbz)$ by $[\grO_1]$ and $[\grO_2]$.
The K\"ahler cone of $N$ then equals $span_{\bbr^+}\{[\grO_1],[\grO_2]\}$. The fiber join $M_\gw$ is formed from the set $\gS_{d+1}$ of K\"ahler classes which are judiciously represented by K\"ahler forms 
\begin{equation}\label{kmatrixeqn}
\gro_j=k^1_j\grO_1 +k^2_j\grO_2, \qquad k^1_j,k^2_j\in \bbz^+.
\end{equation}
So the choices of K\"ahler forms is given by the $2$ by $d+1$ matrix
\begin{equation}\label{Kmatrix}
K=
\begin{pmatrix} 
k^1_1 & k^2_1 \\
\vdots & \vdots \\
k^1_{d+1} & k^2_{d+1}.
\end{pmatrix}
\end{equation}
with values in $\bbz^+$. From the choice of $d+1=d_0+d_\infty +2$ K\"ahler forms, at most two are linearly independent which we denote by $\gro_0,\gro_\infty$ and which we allow to be linearly independent. In order to apply the admissible conditions we assume that $E$ takes the form of Equation \eqref{vbeq}. In this case we take  
\begin{equation}\label{kmatrixeqn2}
\gro_0=k^1_0\grO_1 +k^2_0\grO_2, \qquad \gro_\infty =k^1_\infty\grO_1 +k^2_\infty \grO_2,
\end{equation}
so $c_1(L_0)=-[\gro_0]$ and $c_1(L_\infty)=-[\gro_\infty]$.
We thus arrive at the K\"ahler forms
\begin{equation}\label{d0dinftyeqns}
\gro_{j}=\begin{cases} \gro_0 & ~\text{for $j=1,\ldots,d_0+1$}, \\
             \gro_\infty & ~\text{for $j=d_0+2,\ldots,d_0+d_\infty +2$}.
              \end{cases}
\end{equation}
The fiber join $M_\gw$ with $\gS_2=\{[\gro_0],[\gro_\infty]\}$ gives rise to an infinite family of inequivalent Sasaki CR structures $(\cald_K,J)$ parameterized by the matrix with some obvious relations
\begin{equation}\label{22matrix}
K= \begin{pmatrix}
     k^1_0 & k^2_0 \\
k^1_{\infty} & k^2_{\infty}
\end{pmatrix}, \qquad k^j_i\in \bbz^+.
\end{equation}
The fiber join $M_\gw$ is colinear if and only if  $\det K=0$.
Since there is nothing special about the order of $\gro_0$ and $\gro_\infty$ we have equivalence under the interchange of the rows of $K$. Furthermore, interchanging the $\bbc\bbp^1$'s corresponds to interchanging the columns of $K$. Thus, we are interested in the set $\gK(M_\gw)$ of Sasaki CR structures defined by the equivalence classes of $2$ by $2$ matrices over $\bbz^+$ where matrices $K,K'$ are equivalent if they are equivalent under interchange of rows or interchange of columns. Note that under this equivalence $|{\rm det} K|=|{\rm det} K'|$, but this relation does not determine the equivalence class. However, it is clear from this relation that the cardinality of the set $\gK(M_\gw)$ is $\aleph_0$. In what follows we choose a representative $M_K$ in $\gK(M_\gw)$ and think of $M_K$  as a manifold with its underlying Sasaki CR structure $(\cald_K,J)$ and its family of Sasakian structures parameterized by $\gt^+(\cald_K,J)$.

\subsubsection{Topological analysis}
This splits naturally into two cases, $d=1$ and $d>1$. 

\begin{proposition}\label{k>1topprop}
Let $M_K$ be a Sasaki fiber join on an $S^{2d+1}$ bundle over $\grS_{g_1}\times \grS_{g_2}$. Then
\begin{enumerate}
\item  when $d>1$ $M_\gw$  has the integral cohomology groups of the product $\grS_{g_1}\times \grS_{g_2}\times S^{2d+1}$,
\item when $d=1$  the integral cohomology groups are
\begin{equation}\label{prodRcoh}
H^p(M_K,\bbz)=\begin{cases} \bbz & \text{if $p=0,7$} \\
                                                 \bbz^{2g_1+2g_2} & \text{if $p=1,3,6$} \\
                                                 \bbz^{4g_1g_2+2} & \text{if $p=2,5$} \\
                       \bbz^{2g_1+2g_2} \oplus \bbz_{e} & \text{if $p=4$} \\
                                                                                         0 &\text{otherwise}
                                                                                         \end{cases}  
\end{equation}
where $e=k^1_0k^2_\infty+k^2_0k^1_\infty$.
\end{enumerate}
\end{proposition}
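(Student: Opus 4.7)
The plan splits along the dichotomy in the statement. Part (1) is immediate: since $n=\dim_{\bbc}N=\dim_{\bbc}(\grS_{g_1}\times\grS_{g_2})=2$ and $d>1$, we have $n\leq d$, so Proposition \ref{fibjointop} directly furnishes the group isomorphism $H^p(M_K,\bbz)\cong H^p(\grS_{g_1}\times\grS_{g_2}\times S^{2d+1},\bbz)$ for every $p$.

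For part (2), the hypothesis $n\leq d$ fails and I would run the Gysin sequence of the oriented $S^3$-bundle $S^3\to M_K\to N$, where $N=\grS_{g_1}\times\grS_{g_2}$:
\begin{equation*}
\cdots \to H^{p-4}(N,\bbz)\xrightarrow{\cup e} H^p(N,\bbz)\to H^p(M_K,\bbz)\to H^{p-3}(N,\bbz)\xrightarrow{\cup e} H^{p+1}(N,\bbz)\to\cdots
\end{equation*}
with $e\in H^4(N,\bbz)$ the Euler class of the underlying rank-$4$ oriented real bundle of $E=L_1^*\oplus L_2^*$. The first subtask is to identify $e$ explicitly. Since $d=1$ forces $d_0=d_\infty=0$, both summands are line bundles and
\begin{equation*}
e = c_2(E) = c_1(L_1^*)\cup c_1(L_2^*) = [\gro_0]\cup[\gro_\infty].
\end{equation*}
Substituting \eqref{kmatrixeqn2} and using $[\grO_1]^2=[\grO_2]^2=0$ in $H^*(N,\bbz)$ leaves
\begin{equation*}
e = (k^1_0 k^2_\infty + k^2_0 k^1_\infty)\,[\grO_1][\grO_2],
\end{equation*}
so under the identification $H^4(N,\bbz)\cong\bbz$ via the fundamental class, $e$ is the positive integer announced in the proposition.

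The second subtask is bookkeeping: feed K\"unneth-computed cohomology of $N$ (that is, $H^0=\bbz$, $H^1=\bbz^{2g_1+2g_2}$, $H^2=\bbz^{4g_1g_2+2}$, $H^3=\bbz^{2g_1+2g_2}$, $H^4=\bbz$) into the sequence degree by degree. For $p=0,1,2$ the shifted terms $H^{p-4}(N)$ and $H^{p-3}(N)$ vanish, so $H^p(M_K)=H^p(N)$. For $p=5,6,7$ the corresponding $\cup e$ maps land in vanishing groups, and each $H^p(M_K)$ reads off as the surviving $H^{p-3}(N)$ (respectively $H^4(N)=\bbz$ in degree $7$). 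The only nontrivial piece occurs at $p=3,4$: multiplication by $e$ on $H^0(N)=\bbz$ is injective (its kernel is $0$ and cokernel is $\bbz_e$), yielding $H^3(M_K)=\bbz^{2g_1+2g_2}$ and the short exact sequence
\begin{equation*}
0 \to \bbz_e \to H^4(M_K,\bbz)\to \bbz^{2g_1+2g_2}\to 0,
\end{equation*}
which splits since the quotient is free.

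The main (very mild) obstacle is the Euler-class computation; once that is pinned down the rest is routine. I should also verify that $\ker(\cup e\colon H^0(N)\to H^4(N))=0$, to be sure no spurious torsion contaminates $H^3(M_K,\bbz)$; this is automatic because all $k^i_j$ are positive integers, so $e>0$.
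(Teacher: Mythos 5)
Your proof is correct and takes essentially the same route as the paper: part (1) is the vanishing of the Euler class (the paper's Proposition \ref{fibjointop}, i.e.\ collapse of the spectral sequence), and part (2) rests on the identical computation $\ge=(k^1_0k^2_\infty+k^2_0k^1_\infty)[\grO_1][\grO_2]$, your Gysin sequence being just the condensed form of the paper's Leray--Serre argument in which $d_4(u)=\ge$ is the only nontrivial differential. Your direct observation that $e>0$ because all $k^i_j$ are positive serves the same purpose as the paper's remark that $\ge$ cannot vanish since $M_\gw$ carries a Sasakian structure.
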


\begin{proof}
In case (1) the Euler class of the bundle vanishes, so the spectral sequence collapses. In case (2) the $E_2$ term of the spectral sequnce is
$$E_2^{p,q}=H^p(\calu,\calh^q(S^3))=\begin{cases} H^p(\calu) & \text{if $q=0,3$} \\
                                                                                         0 &\text{otherwise}
                                                                                         \end{cases}  $$
where $\calu=\bigcup_\gra U_\gra$ is a good cover of $\grS_{g_1}\times\grS_{g_2}$ and $\calh^q$ is the derived functor sheaf $U_\gra\mapsto H^q(\pi^{-1}(U_\gra))$. Let $u$ be the top class on the fiber. Then since $M_\gw$ has a Sasakian structure $d_4(u)$ which is the Euler class of the bundle cannot vanish. Using \eqref{kmatrixeqn2} we compute the Euler class
$$\ge=c_1(L_0)\cup c_1(L_\infty)=(-[\gro_0])(-[\gro_\infty])=[\gro_0][\gro_\infty]=(k^1_0k^2_\infty+k^2_0k^1_\infty)[\grO_1][\grO_2].$$
Then since $d_4(u)=\ge$ is the only non-vanishing differential in the spectral sequence we get the result.



\end{proof}

There are a countable infinity of distinct homotopy types of $S^3$ bundles over a product of Riemann surfaces that admit cone decomposable Sasaki fiber joins, and a countable infinity of distinct homotopy types that admit cone indecomposable Sasaki fiber joins. We say more in the next section for the case $g_1=g_2=0$. Here we mention one further topological invariant, namely the  second Stiefel-Whitney class $w_2(M_\gK)$ which is the mod 2 reduction of $c_1(\cald_\gK)$. Of course $M_K$ is a spin manifold if and only if $w_2(M_K)=0$. We now have
\begin{proposition}\label{spinman}
Let $M_K$ be a Sasaki fiber join on an $S^{3}$ bundle over $\grS_{g_1}\times \grS_{g_2}$. Then 
\begin{enumerate}
\item if both $d_0$ and $d_\infty$ are odd, $w_2(M_K)=0$;
\item if both $d_0$ and $d_\infty$ are even, $w_2(M_K)=0$ if and only if $k^i_0+k^i_\infty$ is even for $i=1,2$;
\item if one of $d_0,d_\infty$ is odd and one is even, $w_2(M_K)=0$ if and only if both components of one of the column vectors of $K$ are even.
\end{enumerate}
\end{proposition}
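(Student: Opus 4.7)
The plan is to derive everything from Corollary \ref{c1cor}(3), which gives $w_2(M_K) = \pi_M^*\bigl(w_2(E) + w_2(N)\bigr)$. Since $\pi_M^*$ is injective in the relevant degree (by the Gysin sequence used in Corollary \ref{c1cor}(1)), computing $w_2(M_K)$ reduces to computing $w_2(E) + w_2(N)$ on $N=\grS_{g_1}\times\grS_{g_2}$.

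First I would dispose of $w_2(N)$. Each factor $\grS_{g_i}$ is a complex manifold with $c_1(\grS_{g_i}) = (2-2g_i)[\grO_i]$, which is even, so $w_2(\grS_{g_i}) = c_1(\grS_{g_i})\bmod 2 = 0$. By the Whitney sum formula $w_2(N) = w_2(\grS_{g_1}) + w_2(\grS_{g_2}) = 0$, so $w_2(M_K) = \pi_M^*\,w_2(E)$.

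Next I would compute $w_2(E) = c_1(E) \bmod 2$. Using the admissible splitting $E = E_0 \oplus E_\infty$ with $E_0 = L_0\otimes\bbc^{d_0+1}$ and $E_\infty = L_\infty\otimes\bbc^{d_\infty+1}$, and using $c_1(L_0)=-[\gro_0]$, $c_1(L_\infty)=-[\gro_\infty]$ together with \eqref{kmatrixeqn2}, one obtains
\begin{equation*}
c_1(E) \;=\; -\bigl((d_0{+}1)k^1_0 + (d_\infty{+}1)k^1_\infty\bigr)[\grO_1] \;-\; \bigl((d_0{+}1)k^2_0 + (d_\infty{+}1)k^2_\infty\bigr)[\grO_2].
\end{equation*}
Since $\{[\grO_1],[\grO_2]\}$ reduces to a $\bbz_2$-basis of the corresponding summand of $H^2(N,\bbz_2)$, vanishing of $w_2(E)$ is equivalent to both coefficients being even.

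Finally I would split into the three parity cases for $(d_0,d_\infty)$. If both are odd, both prefactors $d_0{+}1$ and $d_\infty{+}1$ are even, so every coefficient is automatically even and $w_2(M_K)=0$, giving (1). If both are even, both prefactors are odd, so each coefficient is congruent to $k^i_0+k^i_\infty \bmod 2$, giving (2). If exactly one of $d_0,d_\infty$ is odd (say $d_0$ odd, so $d_0{+}1$ is even and $d_\infty{+}1$ is odd), the coefficient reduces to $k^i_\infty \bmod 2$, so vanishing forces both entries of the corresponding column of $K$ to be even; the symmetric argument with $d_\infty$ odd yields the other column, which together give (3). The only mildly subtle point is justifying that $\{[\grO_1],[\grO_2]\}$ are linearly independent mod 2 in the relevant summand of $H^2(N,\bbz_2)$ before pulling back to $M_K$, which follows from the Künneth formula for $\grS_{g_1}\times\grS_{g_2}$ and the injectivity of $\pi_M^*$ in degree 2.
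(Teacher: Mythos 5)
Your proposal is correct and is essentially the paper's own argument: the paper reduces $w_2(M_K)$ to the mod~2 reduction of $c_1(\cald_K)$ computed from Proposition \ref{c1yam} (Equation \eqref{c1noncoeqn}), which after discarding the even terms $2-2g_i$ is exactly your expression $\pi_M^*\bigl(c_1(E)+c_1(N)\bigr)\bmod 2$, and the parity case analysis in $(d_0+1)k^i_0+(d_\infty+1)k^i_\infty$ is identical. Your extra care about $w_2(N)=0$, the $\bbz_2$-independence of $[\grO_1],[\grO_2]$, and the injectivity of $\pi_M^*$ in degree $2$ just makes explicit what the paper leaves to ``the result easily follows.''
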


\begin{proof}
$w_2(M_K)$ is the mod 2 reduction of $c_1(\cald_K)$ which from Proposition \ref{c1yam} and Equation \eqref{kmatrixeqn2} is
\begin{eqnarray}\label{c1noncoeqn} 
c_1(\cald_K) &=&\bigl(2-2g_1-(d_0+1)k^1_0-(d_\infty+1)k^1_\infty)\bigr)\pi^*_M[\grO_1] \\ \notag
                   &+&\bigl(2-2g_2-(d_0+1)k^2_0-(d_\infty+1)k^2_\infty)\bigr)\pi^*_M[\grO_2].
\end{eqnarray}
So the result easily follows.
\end{proof}

We also have the following corollary of Propositions \ref{k>1topprop} and \ref{spinman}
\begin{corollary}\label{z2cor}
Let $M_K$ be a Sasaki fiber join on an $S^{3}$ bundle over $\grS_{g_1}\times \grS_{g_2}$. Then 
\begin{enumerate}
\item if $e$ is odd $M_K$ is cone indecomposable and non-spin, 
\item if $e=2$ $M_K$ is cone decomposable with $k^1_0=k^1_\infty=k^2_0=k^2_\infty=1$. 
\end{enumerate}
\end{corollary}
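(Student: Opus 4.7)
The plan is a short parity-and-positivity argument combining the Euler-class formula from Proposition \ref{k>1topprop} with the Stiefel--Whitney computation of Proposition \ref{spinman}. The setup observation is that an $S^3$ bundle corresponds to $d=1$, forcing $d_0=d_\infty=0$ in the admissible splitting, so only case (2) of Proposition \ref{spinman} is relevant: $w_2(M_K)=0$ if and only if both $k^1_0+k^1_\infty$ and $k^2_0+k^2_\infty$ are even.

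For part (1), I would first note the parity identity
\[
\det K \;=\; k^1_0 k^2_\infty - k^2_0 k^1_\infty \;=\; e \;-\; 2\,k^2_0 k^1_\infty,
\]
so $\det K \equiv e \pmod 2$. Hence $e$ odd forces $\det K$ odd and in particular nonzero; the two K\"ahler classes $[\gro_0]$ and $[\gro_\infty]$ are then linearly independent in $\mathrm{span}_\bbr\{[\grO_1],[\grO_2]\}$, so by Corollary \ref{fiberjoindecomp} (equivalently Proposition \ref{projprod}) $M_K$ is cone indecomposable. For the non-spin claim I argue by contrapositive: if $w_2(M_K)=0$ then $k^i_0\equiv k^i_\infty\pmod 2$ for $i=1,2$, whence
\[
e \;=\; k^1_0 k^2_\infty + k^2_0 k^1_\infty \;\equiv\; k^1_0 k^2_0 + k^2_0 k^1_0 \;\equiv\; 0 \pmod 2,
\]
contradicting $e$ odd. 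Thus $e$ odd forces $w_2(M_K)\neq 0$.

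For part (2), positivity of the entries $k^i_j\in\bbz^+$ gives $k^1_0 k^2_\infty\geq 1$ and $k^2_0 k^1_\infty\geq 1$. Their sum equals $e=2$, so each product equals $1$, which forces all four entries to equal $1$. Then $\det K=0$ and Corollary \ref{fiberjoindecomp} gives cone decomposability.

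Nothing here is an obstacle in any serious sense; the only point requiring care is the reduction to case (2) of Proposition \ref{spinman} (since $d=1$ kills the other cases) and the routine observation that $e$ and $\det K$ agree modulo $2$, which is what links the topological invariant of Proposition \ref{k>1topprop} to the cone decomposability criterion.
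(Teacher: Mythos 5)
Your proof is correct and follows essentially the same route as the paper's: the paper also deduces cone (in)decomposability from the parity relation between $e=k^1_0k^2_\infty+k^2_0k^1_\infty$ and $\det K$ (cone decomposable $\Leftrightarrow\det K=0\Rightarrow e=2k^1_0k^2_\infty$ even), and it proves non-spinness by invoking case (2) of Proposition \ref{spinman} (valid since $d_0=d_\infty=0$) to show a spin $M_K$ forces $e$ even, while part (2) is the same positivity observation the paper dismisses as clear. You have merely written out the details that the paper's terse proof leaves implicit.
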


\begin{proof}
Since $e=k^1_0k^2_\infty+k^2_0k^1_\infty$ and $M_K$ is cone decomposable if and only if $0=\det K=k^1_0k^2_\infty-k^2_0k^1_\infty$, we have $e=2k^1_0k^2_\infty$ if and only if $M_K$ is cone decomposable. Then using (2) of Proposition \ref{spinman} shows that $e$ must be even, so $w_2\neq 0$ which proves (1). (2) is clear.
\end{proof}

\begin{remark}\label{torrem}
When $d=1$ there is a classification of such sphere bundles due to Dold and Whitney \cite{DoWh59}.  For any $d$ there can be topological twists such as $\grS_{g_1}\times \bigl(\grS_{g_2}\tilde{\times} S^{2d+1}\bigr)$. The so-called cohomological rigidity problem is of much interest; however, most of what is known occurs in even dimension \cite{ChMaSu11}, and lies beyond the scope of the present article. 
\end{remark}

\subsubsection{Extremal examples and proof of parts (1) and (2) of Theorem \ref{noncothm}}
As a particular example we take $N$ to be a product of the Riemann sphere with a Riemann surface of arbitrary genus $g$. We can consider three cases $g=0,1,>1$. In the first two we can be quite general; however, when $g>1$ complications arise so we specialize further. We now give a result that proves parts (1) and (2) of Theorem \ref{noncothm}. 

\begin{proposition}\label{speccaseprop}
Let $M_K$ be an admissible fiber join with $N=\bbc\bbp^1\times \grS_g$. Then
\begin{enumerate}
\item if $g=0,1$ the regular ray generated by $\xi_1$ in $\gt^+(\cald_K,J)$ has an extremal representative for all $d_0,d_\infty$ and all $K\in\gK$;
\item if $g>1$, $d_0=d_\infty=1$ and we choose 
\begin{equation*}\label{specextrmatrix2}
K= \begin{pmatrix}
     2 & g \\
     1 & 1
\end{pmatrix},
\end{equation*}
the regular ray in $\gt^+(\cald_K,J)$ has an extremal representative.
\end{enumerate}
Thus, in both cases the Sasaki cone $\gt^+(\cald_K,J)$ of $M_\gw$ as well as the subcone $\gt^+_{sph}(\cald_K,J)$ contains an open set of extremal Sasaki metrics.
\end{proposition}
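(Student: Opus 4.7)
The plan is to treat the two cases separately, applying Proposition~\ref{gennonco} directly in case (1) and falling back on the explicit admissible polynomial analysis in case (2).

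For case (1), the first task is to verify that $M_K$ is \emph{super} admissible, so that every K\"ahler class on the regular quotient $\bbp(E_0\oplus E_\infty)\longrightarrow \bbc\bbp^1\times\grS_g$ is admissible. Here the local K\"ahler product $N = \bbc\bbp^1\times\grS_g$ has factors $N_a$ with $b_2(N_a)=1$, and at most one factor (namely $\grS_g$ when $g=1$) has $b_1(N_a)\neq 0$; by the criterion recalled after Proposition~\ref{acgtpr11} (Remark~2 of \cite{ACGT08}), this forces the entire K\"ahler cone to be admissible. For $g=0$ the CSC K\"ahler metrics on each factor are positive, and for $g=1$ one factor is positive and one is flat, so $N$ is a local K\"ahler product of \emph{non-negative} CSC K\"ahler metrics. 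Proposition~\ref{gennonco} then immediately produces an extremal representative in the regular ray generated by $\xi_1$, and the openness theorem of \cite{BGS06} yields an open set of extremal Sasaki metrics in $\gt^+(\cald_K,J)$.

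Case (2) is genuinely harder because, for $g>1$, the base $\grS_g$ has \emph{negative} CSC and Proposition~\ref{acgtpr11} no longer applies off the shelf. The plan is the following. First I would use Appendix~\ref{kahclass} to compute, for the specific matrix
\[
K= \begin{pmatrix} 2 & g \\ 1 & 1 \end{pmatrix},\qquad d_0=d_\infty=1,
\]
the admissible K\"ahler class on $\bbp(E_0\oplus E_\infty)$ that is the transverse K\"ahler class of the regular Reeb vector field $\xi_1$. The parameters of an admissible class are an interval length on each factor and a single bundle parameter; the Appendix expresses these in terms of the entries of $K$ together with the constant scalar curvatures of the base factors. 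Once the admissible parameters are pinned down, existence of an extremal K\"ahler metric in this class reduces (as in \cite{ACGT08}) to positivity of an explicit extremal polynomial of degree $2d+2=6$ on the unit interval $[-1,1]$, with prescribed boundary values and derivatives encoding the bundle data.

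Then I would carry out the positivity check for the extremal polynomial associated to the above $K$. The expected phenomenon is that the specific numerical choices ($k^1_\infty=k^2_\infty=1$, $k^1_0=2$, and $k^2_0=g$ scaling with the genus) are calibrated so that the negative contribution of the scalar curvature $-4\pi(g-1)$ of $\grS_g$ is dominated by the positive contributions coming from the $\bbc\bbp^1$-factor and the fiber, ensuring strict positivity throughout $(-1,1)$. With positivity in hand, the admissible construction furnishes an extremal K\"ahler metric in the quotient K\"ahler class, and pulling back gives an extremal transverse K\"ahler, hence Sasaki, metric for $\xi_1$. Openness \cite{BGS06} again yields an open set of extremal structures in $\gt^+(\cald_K,J)$, and restricting to the toric subcone $\gt^+_{sph}(\cald_K,J)$ gives the last assertion.

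The main obstacle will be the positivity verification of the extremal polynomial in case (2): although the polynomial is only of degree six, its coefficients are algebraic expressions in the entries of $K$ and the Euler characteristic $2-2g$, and a clean positivity argument requires exploiting the structural identities built into the admissible formalism rather than brute-force expansion. The specific matrix in the statement should be viewed as the simplest choice for which this positivity can be established uniformly in $g>1$, and this is where the careful bookkeeping from Appendix~\ref{kahclass} becomes essential.
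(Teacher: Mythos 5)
Your case (1) is essentially the paper's own argument: super admissibility via Remark 2 of \cite{ACGT08} (each factor of $N=\bbc\bbp^1\times\grS_g$ has $b_2=1$ and at most one has $b_1\neq 0$), non-negative CSC on the base for $g=0,1$, then Proposition \ref{gennonco} and openness from \cite{BGS06}. The genuine gap is in case (2). Your plan hinges on first identifying the transverse K\"ahler class of the regular Reeb field $\xi_1$ via Appendix \ref{kahclass}, but that subsection is written only for $d=1$ fiber joins, i.e. $d_0=d_\infty=0$: its derivation uses the rank-two splitting $\bbp(\BOne\oplus L_0^*\otimes L_\infty)$, the fiberwise moment map on the $\bbc\bbp^1$ fibers, and the additional hypothesis \eqref{xassumption}. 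None of this is available for the $d_0=d_\infty=1$ case you need (the regular quotient is a $\bbc\bbp^3$-bundle, with $e_0,e_\infty$ of complex codimension two), and the paper explicitly states that it does not attempt to determine the regular quotient's K\"ahler class beyond $d=1$. So the first step of your case-(2) argument rests on a computation that is neither in the paper nor supplied by you.

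The paper's actual route for $g>1$ avoids identifying the class altogether: the choice $K=\begin{pmatrix}2&g\\1&1\end{pmatrix}$ forces $k^2_0-k^2_\infty=g-1$, which normalizes the admissible data to the genus-independent values $s_1=2$, $s_2=-2$, and Proposition \ref{appenprop} then shows by direct computation that $F_{extr}(\zeta)$ equals $(1-\zeta^2)^2h(\zeta)$ (up to a positive constant) with $h$ a cubic whose coefficients in powers of $(1+\zeta)$ are positive for \emph{all} admissible parameters $0<r_1,r_2<1$; since every K\"ahler class on the quotient is admissible, every class carries an extremal metric, so whichever class is the transverse class of $\xi_1$, extremality follows. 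To salvage your strategy you would have to either genuinely extend the transverse-class computation to $d_0=d_\infty=1$ and then verify positivity of the resulting degree-seven extremal polynomial (not degree six, as you state) at those specific parameters, or simply adopt the uniform-in-class positivity argument of the Appendix; as written, your positivity step is only asserted as ``expected,'' so case (2) is not established.
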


\begin{proof}
If $g=0$ the quotient complex manifold of $M_K$  arising from the regular Sasakian structure with Reeb vector field $\xi_1$ is equal to 
\begin{eqnarray}\label{KSprojeqn} \notag
\bbp\bigl(L_0\otimes \bbc^{d_0+1}\oplus L_\infty \otimes \bbc^{d_\infty+1}) = \bbp\bigl(\bbc^{d_0+1} \oplus L^*_0\otimes L_\infty\otimes \bbc^{d_\infty+1}\bigr) \\  
= \bbp\bigl(\bbc^{d_0+1}\oplus \calo(k^1_0-k^1_\infty,k^2_0-k^2_\infty)\otimes \bbc^{d_\infty+1}\bigr) \rightarrow \bbc\bbp^1\times\bbc\bbp^1. 
\end{eqnarray}
So the projectivizaton $\bbp\bigl(\bbc^{d_0+1} \oplus L^*_0\otimes L_\infty\otimes \bbc^{d_\infty+1}\bigr)$ is a $\bbc\bbp^d$ bundle over $\bbc\bbp^1\times\bbc\bbp^1$ which is a generalized Bott manifold \cite{ChMaSu10}, and we have the following commutative diagram
\begin{equation}\label{bundiag2}
\xymatrix{
S^1\ar[d]^{id} \ar[r]  &S^{2d+1}\ar[d] \ar[r] &\bbc\bbp^d \ar[d]  \\
               S^1 \ar[r]&M_K \ar[r] \ar[d] &\bbp\bigl(\bbc^{d_0+1}\oplus \calo(k^1_0-k^1_\infty,k^2_0-k^2_\infty)\otimes \bbc^{d_\infty+1}\bigr).\ar[d]  \\
& \bbc\bbp^1\times \bbc\bbp^1 \ar[r]^{id}  &\bbc\bbp^1\times \bbc\bbp^1}
\end{equation}
Now, every K\"ahler class on $\bbp\bigl(\bbc^{d_0+1}\oplus \calo(k^1_0-k^1_\infty,k^2_0-k^2_\infty)\otimes \bbc^{d_\infty+1}\bigr)$ is admissible in the broader sense of the definition given in \cite{ACGT08}, (so the fiber join is super admissible). Thus, for $g=0$ the result  follows from Proposition \ref{gennonco}.  
        When $g\geq 1$ we note that by Remark 2 in [ACGT08] (or the comments above Definition 4.1) that the fiber joins are still super admissible. The $g=1$ case then follows from Proposition 4.3, similarly to above, and the special case with $g>1$ follows from Proposition \ref{appenprop} in the Appendix.
\end{proof}

\begin{remark}\label{genusrem}
When $g>1$ we expect many other examples. However, the analysis in the Appendix that assures the positivity of the `extremal polynomial' needs to be worked out in each case which becomes more complicated as $d_0,d_\infty$ grow. 
Note also that for the example given in (2) of Proposition \ref{speccaseprop} if $g>2$ the fiber join $M_K$ is cone indecomposable, whereas, it is cone decomposable when $g=2$. 
\end{remark}

\subsubsection{CSC examples and proof of part (3) of Theorem \ref{noncothm}}
In the above setting we will now assume that $N=\grS_{g_1}\times \grS_{g_2}$ and $d=1$. The quotient of the regular ray equals
$\bbp(\BOne \oplus L^*_0\otimes L_\infty)$ with $c_1(L_0)=-[\omega_0]$, $c_1(L_\infty)=-[\omega_\infty]$, and $\omega_0$ and $\omega_\infty$ are given by \eqref{kmatrixeqn2}. In order to apply the admissible construction we assume $k^i_0\neq k^i_\infty$ for $i=1,2$. We will use our findings in Appendix \ref{kahclass} together with results from Section 3.4 of \cite{ACGT08} to obtain admissible cone indecomposable fiber joins with a cscS regular ray (up to isotopy). Note that in the cone decomposable case, i.e., the colinear case, no true admissible case ($[\omega_0]\neq [\omega_\infty]$) would have a cscS regular ray (see e.g. Theorem 7 of \cite{ACGT08}), but in that case we have a regular join and so by Theorem \ref{JGAthm} we have a cscS ray elsewhere in the Sasaki cone. 
Thus, below we assume non-colinearity.

Here we have 
$$\omega_0-\omega_\infty= (k^1_0-k^1_\infty)\grO_1 +(k^2_0-k^2_\infty)\grO_2$$
and
$$\omega_0+\omega_\infty= (k^1_0+k^1_\infty)\grO_1 +(k^2_0+k^2_\infty)\grO_2$$
so \eqref{xassumption} in Appendix A is satisfied with $\cala=\{1,2\}$,
$\omega_{N_1}= 2\pi (k^1_0-k^1_\infty)\grO_1$, $\omega_{N_2}= 2\pi(k^2_0-k^2_\infty)\grO_2$.
This means that the K\"ahler class of the regular quotient is admissible with the admissible data associated to the matrix 
$K= \begin{pmatrix}
     k^1_0 & k^2_0 \\
k^1_{\infty} & k^2_{\infty}
\end{pmatrix}$
given by
\begin{equation}\label{admdata}
s_1=\frac{2(1-g_1)}{k^1_0-k^1_\infty},\quad s_2=\frac{2(1-g_2)}{k^2_0-k^2_\infty}, \quad r_1=\frac{k^1_0-k^1_\infty}{k^1_0+k^1_\infty}, \quad r_2=\frac{k^2_0-k^2_\infty}{k^2_0+k^2_\infty}.
\end{equation}
Note that $r_1\neq r_2$ since we are assuming non-colinearity and we have a genuine $|\cala|=2$ admissible case.
Compared to Section 3.4 of \cite{ACGT08} we have a slight notation change (using $r_i$ instead of $x_i$), but aside from this we can directly 
use the findings in this section to get the following slight generalization of Lemma 7 of \cite{ACGT08}

\begin{lemma}\label{ACGT08lemma7}
With $s_1,s_2,r_1,r_2$ given by the admissible data \eqref{admdata}, the K\"ahler class of the regular quotient has an admissible CSC K\"ahler metric iff
\begin{equation}\label{(17)ACGT08}
r_1(s_1(r_1-r_2)-2+(1-s)r_1r_2)+3(s-1)r_2=0,
\end{equation}
\begin{equation}\label{(18)ACGT08}
r_2(s_2(r_2-r_1)-2+(1-s)r_1r_2)+3(s-1)r_1=0,
\end{equation}
for some $s\in \bbr$ (up to scale, $s$ is the scalar curvature of the resulting K\"ahler metric), and
\begin{equation}\label{positivityQ}
Q(\gz)=(1+r_1\gz)(1+r_2\gz)+(1-\frac{s}{2})r_1r_2(1-\gz^2)>0, \,\, \text{for} \,\, -1<\gz<1.
\end{equation}
Moreover, if $s\geq 0$ condition \eqref{positivityQ} automatically holds.
\end{lemma}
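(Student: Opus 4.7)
The plan is to specialize the general admissible CSC calculation from Section 3.4 of \cite{ACGT08} to our two-factor base $N = \grS_{g_1} \times \grS_{g_2}$ with $|\cala|=2$ and $d_0 = d_\infty = 0$. Appendix \ref{kahclass} already verifies that the K\"ahler class of the regular quotient is admissible with the data $(s_1, s_2, r_1, r_2)$ of \eqref{admdata}, so the first step is to invoke the admissible ansatz: such a K\"ahler metric on $\bbp(\BOne \oplus L_0^* \otimes L_\infty)$ is parameterized by a positive function $\Theta(\zeta)$ on $(-1,1)$ subject to the standard boundary conditions $\Theta(\pm 1) = 0$ and $\Theta'(\pm 1) = \mp 2$.

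Next I would apply the scalar curvature formula from \cite{ACGT08}, which expresses $s^T$ as a rational function of $\zeta$ involving $\Theta$, $\Theta'$ and the polynomial $p_c(\zeta) = (1+r_1 \zeta)(1+r_2 \zeta)$. Setting $s^T$ equal to a constant $s$ produces a second-order linear ODE for $p_c \Theta$ which can be integrated twice in closed form; the four boundary conditions then pin $\Theta$ uniquely and leave precisely two compatibility constraints on $(s_1, s_2, r_1, r_2, s)$. Following the same bookkeeping as in the $|\cala|=2$ instance of Lemma 7 of \cite{ACGT08}, these two constraints are exactly \eqref{(17)ACGT08} and \eqref{(18)ACGT08}, and the resulting $\Theta$ factors as $\Theta(\zeta) = (1-\zeta^2) Q(\zeta)/p_c(\zeta)$, so requiring $\Theta > 0$ on the open interval is equivalent to the positivity statement \eqref{positivityQ}. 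The converse direction of the iff is automatic: given \eqref{(17)ACGT08}, \eqref{(18)ACGT08}, and \eqref{positivityQ}, the $\Theta$ so defined is smooth and positive on $(-1,1)$ with the correct boundary data, and the corresponding admissible metric is CSC by construction.

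The main obstacle is the final assertion that \eqref{positivityQ} holds automatically whenever $s \geq 0$. The approach I would take is to change variables to $u = (1+\zeta)/2$ and $v = (1-\zeta)/2$, so that $u+v = 1$ and $1 + r_i \zeta = (1+r_i) u + (1-r_i) v$, rewriting $Q$ as a homogeneous quadratic form in $(u,v)$. Since $k^i_0, k^i_\infty \in \bbz^+$ forces $|r_i| < 1$, the pure $u^2$ and $v^2$ coefficients of this form are strictly positive with values $(1\pm r_1)(1\pm r_2)$, and a short case analysis on the signs of $1-s$ and $r_1 r_2$ shows either that the cross-term coefficient is also non-negative, giving positivity at once on $\{u,v > 0\}$, or that in the remaining case the restriction of $Q$ to $u \in (-1,1)$ is convex with strictly positive endpoint values, so that a direct discriminant check rules out a negative minimum. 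This combinatorial-flavored case analysis is what I expect to take the most care, but it relies on nothing beyond elementary facts about quadratic polynomials.
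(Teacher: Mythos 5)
Your first paragraph is essentially the paper's own route: the ``iff'' part is exactly the specialization of Section 3.4 of \cite{ACGT08} (with $r_1\neq r_2$, which the standing non-colinearity assumption guarantees), where the extremal polynomial factors as $(1-\gz^2)Q(\gz)$ and positivity of $\Theta$ on $(-1,1)$ is the only condition left to check, so that part is fine.

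The gap is in the final assertion. As a free-standing claim about the quadratic $Q$, ``$s\geq 0$ implies $Q>0$ on $(-1,1)$'' is false: in your $(u,v)$ coordinates one gets $Q=(1+r_1)(1+r_2)u^2+2\bigl(1+(1-s)r_1r_2\bigr)uv+(1-r_1)(1-r_2)v^2$, and if $r_1r_2>0$ and $s$ is large the middle coefficient is very negative while $(1-r_1^2)(1-r_2^2)<1$, so the discriminant check you invoke fails (for instance $r_1=0.9$, $r_2=0.89$, $s=100$ gives middle coefficient $\approx -78$, whose square dwarfs the product of the outer coefficients); note also that convexity in $\gz$ with positive endpoint values does not by itself exclude a negative interior minimum. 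What rescues the statement --- and what your proposal never uses --- is that the hypotheses \eqref{(17)ACGT08}--\eqref{(18)ACGT08} together with the specific data \eqref{admdata} constrain the signs: since $s_ar_a=\frac{2(1-g_a)}{k^a_0+k^a_\infty}<2$, the two CSC equations force $r_1r_2<0$. Once that is known the conclusion is immediate and needs no case analysis at all: the $\gz^2$-coefficient of $Q$ is $\frac{s}{2}r_1r_2\leq 0$, so $Q$ is concave (or affine) on $[-1,1]$, and $Q(\pm1)=(1\pm r_1)(1\pm r_2)>0$ because $0<|r_a|<1$, whence $Q>0$ on $(-1,1)$. So you must either import the sign fact $r_1r_2<0$ from the CSC equations (as the paper does, following \cite{ACGT08}) or otherwise bring \eqref{(17)ACGT08}--\eqref{(18)ACGT08} into the positivity argument; elementary facts about quadratics alone cannot close the remaining case $r_1r_2>0$, $s>1$.
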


\begin{proof}
From Section 3.4 of \cite{ACGT08} we see, assuming $r_1\neq r_2$, that a CSC K\"ahler metric exists if and only if $s$ satisfies Equations \eqref{(17)ACGT08} and \eqref{(18)ACGT08} and the extremal polynomial $F_\grO(\gz)$ of \cite{ACGT08}, which equals $(1-\gz^2)Q(\gz)$, is positive on $-1<\gz<1$. This proves the first statement.
For the second statement we note that since $s_a r_a<2$, equations \eqref{(17)ACGT08} and \eqref{(18)ACGT08} imply that $r_1r_2<0$, i.e., $(k^1_0-k^1_\infty)(k^2_0-k^2_\infty)<0.$ For a solution to \eqref{(17)ACGT08} and \eqref{(18)ACGT08} with $s\geq 0$, we see right away that $Q(\gz)$ is concave down or linear. Hence, since $Q(\pm 1)>0$, \eqref{positivityQ} holds.
\end{proof}

\bigskip

While a systematic analysis of all the possible solutions of \eqref{(17)ACGT08} and \eqref{(18)ACGT08} with \eqref{admdata} seems not really to be tractable, we can obtain solutions by making a special ansatz. In particular, the first part of Lemma 8 of \cite{ACGT08} is:

\begin{lemma}\cite{ACGT08}\label{ACGT08lemma}
With the admissible data \eqref{admdata} assumed, suppose in addition that $s_1+s_2=0$ and $r_1+r_2=0$. Then
$s=\frac{1-r_1^2+2s_1r_1}{3-r_1^2}$ is a solution to Equations \eqref{(17)ACGT08} and \eqref{(18)ACGT08}. In particular, if in addition $g_1=0,1$, then $s\geq 0$ giving a CSC K\"ahler metric.
\end{lemma}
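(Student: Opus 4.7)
The plan is to reduce the lemma to direct substitution followed by a short positivity check. First I would write the ansatz $r_2=-r_1$, $s_2=-s_1$ and plug into equation \eqref{(17)ACGT08}: the bracket becomes $s_1(2r_1)-2-(1-s)r_1^2$, and adding the $3(s-1)(-r_1)$ term and dividing by $r_1$ (which is nonzero, since $k^1_0\neq k^1_\infty$) yields the linear equation
\[
2s_1 r_1 -2 -(1-s)r_1^2 -3(s-1)=0,
\]
which rearranges to $s(3-r_1^2)=1-r_1^2+2s_1 r_1$. Solving gives exactly $s=\frac{1-r_1^2+2s_1 r_1}{3-r_1^2}$. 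Next I would verify that \eqref{(18)ACGT08} gives the same equation; this is forced by the built-in symmetry of \eqref{(17)ACGT08} and \eqref{(18)ACGT08} under simultaneous interchange of indices $1\leftrightarrow 2$, which our ansatz respects, but one can also simply repeat the substitution and observe that the resulting linear equation in $s$ coincides with the one above after dividing by $r_2=-r_1$.

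For the second assertion, I would unpack the admissible data \eqref{admdata}. Since $k^1_0,k^1_\infty\in\bbz^+$ one has $|r_1|<1$, so the denominator $3-r_1^2>0$. For the numerator, note
\[
2s_1 r_1 = 2\cdot\frac{2(1-g_1)}{k^1_0-k^1_\infty}\cdot\frac{k^1_0-k^1_\infty}{k^1_0+k^1_\infty}=\frac{4(1-g_1)}{k^1_0+k^1_\infty},
\]
which is nonnegative precisely when $g_1\in\{0,1\}$. Combined with $1-r_1^2>0$, this gives $s\geq 0$ in those two cases. The final conclusion, that this $s$ genuinely yields an admissible CSC K\"ahler metric, is then immediate from the second statement of Lemma \ref{ACGT08lemma7}, which tells us that the positivity condition \eqref{positivityQ} on $Q(\zeta)$ is automatic when $s\geq 0$.

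There is no real obstacle here: the content of the lemma is essentially that the ansatz $r_1+r_2=0$, $s_1+s_2=0$ collapses the two-variable system \eqref{(17)ACGT08}--\eqref{(18)ACGT08} into a single linear equation for $s$, and the positivity of $Q$ is already handled in Lemma \ref{ACGT08lemma7}. The only mild care needed is verifying that \eqref{(17)ACGT08} and \eqref{(18)ACGT08} do reduce to the same equation under the ansatz (rather than to a genuinely overdetermined pair), which follows from the symmetry noted above.
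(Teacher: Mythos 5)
Your proof is correct. For the first statement the paper simply cites the first case of Lemma 8 of \cite{ACGT08}, whereas you verify it from scratch: substituting $r_2=-r_1$, $s_2=-s_1$ into \eqref{(17)ACGT08}, dividing by $r_1\neq 0$ (legitimate, since the admissible setup assumes $k^1_0\neq k^1_\infty$), and solving the resulting linear equation $s(3-r_1^2)=1-r_1^2+2s_1r_1$; your check that \eqref{(18)ACGT08} collapses to the same equation is also right, though the quicker justification is not the $1\leftrightarrow 2$ symmetry per se but the fact that the reduced expression depends only on $s_1r_1$ and $r_1^2$, which are unchanged under $(r_1,s_1)\mapsto(-r_1,-s_1$) --- in any case you cover this by redoing the substitution. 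What your route buys is a self-contained, elementary verification in place of an external citation; what the paper's route buys is brevity and a pointer to the general classification of solutions in \cite{ACGT08}. For the second statement your argument coincides with the paper's: $2s_1r_1=\frac{4(1-g_1)}{k^1_0+k^1_\infty}\geq 0$ exactly when $g_1\in\{0,1\}$, the numerator and denominator are then positive since $|r_1|<1$, and the passage from $s\geq 0$ to an actual CSC metric is exactly the last sentence of Lemma \ref{ACGT08lemma7}, which is how the paper concludes as well.
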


\begin{proof}
The first statement is precisely the first case of Lemma 8 in \cite{ACGT08}, and the second statement follows from Lemma \ref{ACGT08lemma7} together with $s_1r_1=\frac{2(1-g_1)}{k^1_0+k^1_\infty}$ since $s\geq 0$ will hold if $s_1r_1\geq 0$.
\end{proof}

Note that the conditions in the hypothesis of Lemma \ref{ACGT08lemma} imply that $g_1=1$ if and only if $g_2=1$, and since $s_a=0$ the only other condition is $r_1+r_2=0$. However, if $g_1=g_2=0$ the conditions $s_1+s_2=r_1+r_2=0 $ imply $k^1_0=k^2_\infty$ and $k^1_\infty=k^2_0$. So for $N=\bbc\bbp^1\times \bbc\bbp^1$ and $N=T^2\times T^2$ Lemma \ref{ACGT08lemma} gives

\begin{lemma}\label{g1g0lem}
Assume the hypothesis of Lemma \ref{ACGT08lemma}. If
\begin{enumerate}
\item $g_1=g_2=0$ and $K= \begin{pmatrix}
    k & l \\
l & k
\end{pmatrix}$
with $k,l\in \bbz^+$ and $k\neq l$, or,
\item  $g_1=g_2=1$,
and $K= \begin{pmatrix}
     k^1_0 & k^2_0 \\
k^1_{\infty} & k^2_{\infty}
\end{pmatrix}$ is such that 
$\frac{k^1_0-k^1_\infty}{k^1_0+k^1_\infty}=\frac{k^2_\infty-k^2_0}{k^2_0+k^2_\infty}$;
\end{enumerate}
then, up to isotopy, the K\"ahler metric is CSC.
\end{lemma}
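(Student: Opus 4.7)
The plan is to verify, in each of the two cases, that the admissible data \eqref{admdata} associated to $K$ satisfies the hypotheses $s_1+s_2=0$ and $r_1+r_2=0$ of Lemma \ref{ACGT08lemma}. Once the arithmetic is in hand, the ``in particular'' clause of that lemma (which covers $g_1=g_2=0$ in Case (1), and which applies trivially in Case (2) since there $s_1r_1=0$) delivers a CSC admissible K\"ahler metric in the K\"ahler class of the regular quotient $\bbp(\BOne\oplus L_0^*\otimes L_\infty)\rightarrow \grS_{g_1}\times\grS_{g_2}$. Pulling back this transverse K\"ahler form to $M_K$ produces a Sasaki metric lying in the ray through $\xi_1$ whose transverse scalar curvature $s^T_g$ is constant; since it shares its underlying Sasaki CR structure with the initial Sasaki metric in that ray, the two are isotopic within the Sasaki cone, yielding the desired CSC property up to isotopy.

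For Case (1), with $g_1=g_2=0$ and $K=\begin{pmatrix} k & l \\ l & k \end{pmatrix}$, $k\neq l$, direct substitution into \eqref{admdata} gives
\[
s_1=\frac{2}{k-l}=-s_2,\qquad r_1=\frac{k-l}{k+l}=-r_2,
\]
so both required equalities are immediate, and $\det K=k^2-l^2\neq 0$ confirms non-colinearity. For Case (2), with $g_1=g_2=1$, we have $s_1=s_2=0$ identically, and the hypothesis on $K$ is literally $r_1=-r_2$ after unfolding \eqref{admdata}. In both instances the value $s=(1-r_1^2+2s_1r_1)/(3-r_1^2)$ produced by Lemma \ref{ACGT08lemma} is non-negative: in Case (1) one has $s_1r_1=2/(k+l)>0$, while in Case (2) $s_1r_1=0$ and $|r_1|<1$ (since $k^i_j\in\bbz^+$ forces $|k^1_0-k^1_\infty|<k^1_0+k^1_\infty$), which by the last clause of Lemma \ref{ACGT08lemma7} makes the positivity of $Q(\zeta)$ on $(-1,1)$ automatic.

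I do not anticipate any serious obstacle: the K\"ahler-geometric content is packaged inside Lemma \ref{ACGT08lemma} and ultimately in the admissible construction of \cite{ACGT08}, so what remains is essentially substitution. The only mild point worth flagging is checking that the non-colinearity assumption underlying the admissible setup is preserved: in Case (1) this is visible from $\det K\neq 0$, and in Case (2) the combination of $r_1=-r_2$ with the standing assumption $k^i_0\neq k^i_\infty$ (needed to even form the admissible data in \eqref{admdata}) rules out colinearity.
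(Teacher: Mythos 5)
Your proposal is correct and follows essentially the same route as the paper: you verify from \eqref{admdata} that the given matrices yield $s_1+s_2=0$ and $r_1+r_2=0$, then invoke Lemma \ref{ACGT08lemma} (with its nonnegativity of $s$ for $g=0,1$, resting on Lemma \ref{ACGT08lemma7}) to get the CSC metric on the regular quotient, hence the CSC ray up to isotopy. The paper runs the same computation in the reverse direction (deriving which $K$ satisfy the ansatz rather than checking the stated $K$), so your extra checks of non-colinearity and $s\geq 0$ are welcome but not a departure.
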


When $g_i>1$ we can still look for solutions with positive scalar curvature, but generally it is somewhat tedious. So we make the additional assumption that the genera are equal.

\begin{lemma}\label{ACGT08lemma8c}
Under the hypothesis of Lemma \ref{ACGT08lemma} with $g_1=g_2=g$,
if $K= \begin{pmatrix}
     k+1 & k \\
k & k+1
\end{pmatrix}$ is such that $k\in \bbz^+$ satisfies
$k> \lfloor\frac{2 g-3 +\sqrt{4 g^2-8 g+5}}{2} \rfloor$,
then, up to isotopy, the K\"ahler metric is CSC.
\end{lemma}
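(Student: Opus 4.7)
My plan is to apply Lemma \ref{ACGT08lemma} directly and then verify its scalar curvature positivity criterion. The strategy has four steps, and the only real work is a quadratic inequality in $k$.

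First, I would compute the admissible data associated to $K=\begin{pmatrix} k+1 & k \\ k & k+1\end{pmatrix}$ from the formulas in \eqref{admdata}. Here $k^1_0-k^1_\infty=1$, $k^2_0-k^2_\infty=-1$, and $k^1_0+k^1_\infty=k^2_0+k^2_\infty=2k+1$, so
\begin{equation*}
s_1=-2(g-1),\quad s_2=2(g-1),\quad r_1=\tfrac{1}{2k+1},\quad r_2=-\tfrac{1}{2k+1}.
\end{equation*}
In particular $s_1+s_2=0$ and $r_1+r_2=0$, so the hypothesis of Lemma \ref{ACGT08lemma} is met and
\begin{equation*}
s=\frac{1-r_1^2+2s_1r_1}{3-r_1^2}
\end{equation*}
is a solution of \eqref{(17)ACGT08}--\eqref{(18)ACGT08}. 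By Lemma \ref{ACGT08lemma7}, to conclude that this $s$ corresponds to a genuine (admissible) CSC K\"ahler metric on the regular quotient, and hence a cscS regular ray on $M_K$ up to isotopy, it suffices to verify the positivity condition \eqref{positivityQ} on $Q(\zeta)$. The second half of Lemma \ref{ACGT08lemma7} says this is automatic as soon as $s\geq 0$, so the problem reduces to a sign analysis of $s$.

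Next I would substitute the values above and clear denominators. The denominator $3(2k+1)^2-1$ is manifestly positive, so $s\geq 0$ is equivalent to the numerator $(2k+1)^2-1-4(g-1)(2k+1)\geq 0$, which after expansion simplifies to
\begin{equation*}
4\bigl(k^2+(3-2g)k+(1-g)\bigr)\geq 0.
\end{equation*}
The (larger) root of the quadratic $k^2+(3-2g)k+(1-g)=0$ is precisely $k_*:=\frac{2g-3+\sqrt{4g^2-8g+5}}{2}$. Since the leading coefficient is positive, the numerator is positive whenever $k>k_*$, and since $k$ is a positive integer, this is equivalent to $k>\lfloor k_*\rfloor$. (One checks $k_*$ is irrational for $g>1$, or otherwise handles the edge case directly, so strict inequality propagates.) Under this condition $s>0$, and Lemma \ref{ACGT08lemma7} supplies the admissible CSC K\"ahler metric on the regular quotient.

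The main obstacle is really conceptual rather than technical: one has to recognize that the special ansatz of Lemma \ref{ACGT08lemma} is exactly the symmetry forced by the $K$ chosen here (the matrix is symmetric, producing $r_1+r_2=0$ and $s_1+s_2=0$ simultaneously), after which the whole problem collapses to the one-variable quadratic inequality in $k$. Once $s\geq 0$ is established, the lift from CSC K\"ahler on $\bbp(\BOne\oplus L_0^*\otimes L_\infty)$ to cscS on the regular ray of $M_K$ is the standard correspondence between CSC transverse K\"ahler metrics and cscS metrics along a quasiregular ray.
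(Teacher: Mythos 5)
Your proposal is correct and follows essentially the same route as the paper's proof: compute the admissible data \eqref{admdata} for this $K$, note $s_1+s_2=0$ and $r_1+r_2=0$ so Lemma \ref{ACGT08lemma} gives $s=\frac{2(k^2+(3-2g)k+(1-g))}{6k^2+6k+1}$, and then observe that $s\geq 0$ (equivalently $k$ exceeding the larger root $\frac{2g-3+\sqrt{4g^2-8g+5}}{2}$ of the quadratic, i.e.\ $k>\lfloor\cdot\rfloor$ for integer $k$) lets Lemma \ref{ACGT08lemma7} deliver the admissible CSC K\"ahler metric, hence the cscS regular ray up to isotopy. The only cosmetic difference is that you clear denominators in the formula of Lemma \ref{ACGT08lemma} rather than quoting the simplified expression for $s$ directly, and your aside about irrationality of the root is unnecessary since $k>\lfloor k_*\rfloor$ is equivalent to $k>k_*$ for integer $k$ in any case.
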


\begin{proof}
Assuming $g_1=g_2=g$ and setting $k_0^1=k+1$, $k_\infty^1=k$, $k_0^2=k$, and $k_\infty^2=k+1$ for $k\in\bbz^{+}$, we have
$s_1=2(1-g)$, $s_2=2(g-1)$, $r_1=\frac{1}{2k+1}$, and $r_2=\frac{-1}{2k+1}$. Thus, by Lemma \ref{ACGT08lemma}, we know  that
 \eqref{(17)ACGT08} and \eqref{(18)ACGT08} are solved with $s=\frac{2(k^2+(3-2g)k+(1-g))}{6k^2+6k+1}$. We can now use Lemma \ref{ACGT08lemma7}, for any value of $g$, as long as $k^2+(3-2g)k+(1-g)>0$, that is, as long as $k> \lfloor\frac{2 g-3 +\sqrt{4 g^2-8 g+5}}{2} \rfloor$.
\end{proof}

Putting these lemmas together gives Theorem \ref{noncothm} of the Introduction. 

To end this subsection let us give a couple of examples of solutions to \eqref{(17)ACGT08}, \eqref{(18)ACGT08},  and \eqref{positivityQ}, where the transverse scalar curvature $s=s^T_g<0$:

\begin{proposition}\label{randomcscS}
In the above setting with $g_1= 5$, $g_2=3$, and
 $K= \begin{pmatrix}
     2 & 1 \\
1 & 3
\end{pmatrix}$, up to isotopy, the regular ray is cscS.
\end{proposition}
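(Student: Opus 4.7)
The plan is a direct verification of the hypotheses of Lemma \ref{ACGT08lemma7} for the given data. First I would compute the admissible data via \eqref{admdata}:
\begin{equation*}
s_1=\frac{2(1-5)}{2-1}=-8,\quad s_2=\frac{2(1-3)}{1-3}=2,\quad r_1=\frac{2-1}{2+1}=\frac{1}{3},\quad r_2=\frac{1-3}{1+3}=-\frac{1}{2}.
\end{equation*}
In particular $\det K = 5 \neq 0$, so the fiber join is genuinely non-colinear and the $|\cala|=2$ admissible framework applies.

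Next I would substitute these values into the CSC equations \eqref{(17)ACGT08} and \eqref{(18)ACGT08}, each of which becomes a linear equation in $s$. A short calculation shows that \eqref{(17)ACGT08} reduces to $\tfrac{-26(1+s)}{18}=0$ and \eqref{(18)ACGT08} reduces to $\tfrac{11(1+s)}{12}=0$; both give the single consistent solution $s=-1$. Because $s<0$ here, the automatic positivity statement at the end of Lemma \ref{ACGT08lemma7} does not apply, so the condition \eqref{positivityQ} must be checked by hand. With $r_1 r_2 = -1/6$ and $1-s/2=3/2$,
\begin{equation*}
Q(\gz)=\bigl(1+\tfrac{\gz}{3}\bigr)\bigl(1-\tfrac{\gz}{2}\bigr)-\tfrac{1}{4}(1-\gz^2)=\tfrac{1}{12}\bigl((\gz-1)^2+8\bigr),
\end{equation*}
which is strictly positive for all $\gz\in\bbr$, and in particular on $(-1,1)$.

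Having verified both the CSC system and the positivity condition, Lemma \ref{ACGT08lemma7} produces an admissible CSC K\"ahler metric in the K\"ahler class of the regular quotient $\bbp(\BOne\oplus L_0^*\otimes L_\infty)\to \grS_5\times \grS_3$. Pulling this back to the Sasaki level (up to transverse isotopy) yields a Sasakian structure on the regular ray whose transverse scalar curvature $s^T_g=s=-1$ is constant, which by the standard scaling relation $s_{g_a}=a^{-1}(s_g+2n)-2n$ gives a CSC Sasaki metric along the regular ray. The only slightly delicate step is the positivity check for $Q$, since negative $s$ can in principle spoil it, but in this specific case the resulting quadratic is manifestly positive.
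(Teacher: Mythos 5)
Your proposal is correct and follows essentially the same route as the paper: compute the admissible data $(s_1,s_2,r_1,r_2)=(-8,2,\tfrac13,-\tfrac12)$ from \eqref{admdata}, verify that \eqref{(17)ACGT08} and \eqref{(18)ACGT08} are solved by $s=-1$, and check \eqref{positivityQ} directly by exhibiting $Q(\gz)=\tfrac{1}{12}(\gz^2-2\gz+9)>0$, which agrees with the paper's $Q(\gz)=\tfrac{1}{12}(9-2\gz+\gz^2)$. Your version simply makes the linear reductions of the two CSC equations explicit, which the paper leaves as "easily seen."
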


\begin{proof}
From \eqref{admdata}, we have
$$s_1=-8,\quad s_2=2, \quad r_1=1/3, \quad r_2=-1/2,$$
which is easily seen to solve \eqref{(17)ACGT08} and \eqref{(18)ACGT08} with $s=-1$. Now,
$Q(\gz)$ from \eqref{positivityQ} is given by $Q(\gz) = (1/12) (9 - 2 \gz + \gz^2)$, which clearly satisfies the condition in \eqref{positivityQ}.
\end{proof}

\begin{proposition}\label{randomcscS2}
In the above setting with $g_1= 18$, $g_2=14$, and
 $K= \begin{pmatrix}
     2 & 1 \\
1 & 3
\end{pmatrix}$, up to isotopy, the regular ray is cscS.
\end{proposition}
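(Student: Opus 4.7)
The plan is to follow the template set by Proposition \ref{randomcscS} verbatim, since the matrix $K$ is identical and only the genera change. First I would plug the data $g_1=18$, $g_2=14$, and $K=\begin{pmatrix}2 & 1\\ 1 & 3\end{pmatrix}$ into the admissible-data formulas \eqref{admdata}. This yields immediately
\[
s_1=\tfrac{2(1-18)}{2-1}=-34,\qquad s_2=\tfrac{2(1-14)}{1-3}=13,\qquad r_1=\tfrac{1}{3},\qquad r_2=-\tfrac{1}{2},
\]
exactly as in Proposition \ref{randomcscS} for the $r_i$, but with different $s_a$. Since $r_1\neq r_2$ we are in the genuinely admissible $|\cala|=2$ case, and Lemma \ref{ACGT08lemma7} becomes applicable.

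Next, I would substitute these values into the two linear equations \eqref{(17)ACGT08} and \eqref{(18)ACGT08} and solve for $s$. Each equation is a linear equation in $s$ (the $r_i,s_a$ are fixed constants), and a direct computation shows that both equations produce the same value $s=-6$. This confirms the existence of an admissible CSC K\"ahler metric representative \emph{provided} the positivity condition \eqref{positivityQ} also holds.

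The main obstacle is precisely this positivity step, because $s=-6<0$ and hence the automatic conclusion in the last sentence of Lemma \ref{ACGT08lemma7} is not available. I would therefore compute $Q(\zeta)$ explicitly from \eqref{positivityQ}:
\[
Q(\zeta)=\bigl(1+\tfrac{\zeta}{3}\bigr)\bigl(1-\tfrac{\zeta}{2}\bigr)+\bigl(1-\tfrac{-6}{2}\bigr)\bigl(-\tfrac{1}{6}\bigr)(1-\zeta^{2})=\tfrac{1}{2}\zeta^{2}-\tfrac{1}{6}\zeta+\tfrac{1}{3},
\]
and verify positivity on $(-1,1)$, for instance by checking that its discriminant $\tfrac{1}{36}-\tfrac{2}{3}<0$, so $Q(\zeta)>0$ for all real $\zeta$.

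Once the three items \eqref{(17)ACGT08}, \eqref{(18)ACGT08}, and \eqref{positivityQ} are established, Lemma \ref{ACGT08lemma7} produces an admissible CSC K\"ahler metric in the K\"ahler class of the regular quotient $\bbp(\BOne\oplus L_0^{*}\otimes L_\infty)\rightarrow \grS_{18}\times\grS_{14}$. Pulling this transverse K\"ahler metric up to $M_K$ via the regular Sasakian structure then shows, up to isotopy in the regular ray, that the regular ray is cscS, as required.
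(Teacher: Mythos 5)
Your proposal is correct and follows essentially the same route as the paper's own proof: plug the data into \eqref{admdata} to get $s_1=-34$, $s_2=13$, $r_1=1/3$, $r_2=-1/2$, verify that $s=-6$ solves \eqref{(17)ACGT08} and \eqref{(18)ACGT08}, and check that $Q(\gz)=\tfrac{1}{6}(2-\gz+3\gz^2)>0$ on $(-1,1)$ so that Lemma \ref{ACGT08lemma7} applies. Your explicit discriminant computation simply makes precise the positivity the paper calls "clear," so there is nothing to change.
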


\begin{proof}
From \eqref{admdata}, we have
$$s_1=-34,\quad s_2=13, \quad r_1=1/3, \quad r_2=-1/2,$$
which is easily seen to solve \eqref{(17)ACGT08} and \eqref{(18)ACGT08} with $s=-6$. Now,
$Q(\gz)$ from \eqref{positivityQ} is given by $Q(\gz) = (1/6) (2 -  \gz +3 \gz^2)$, which clearly satisfies the condition in \eqref{positivityQ}.
\end{proof}

\begin{remark}
Note that with $K$ as in Propositions \ref{randomcscS} and \ref{randomcscS2} we can also solve
\eqref{(17)ACGT08} and \eqref{(18)ACGT08}  with  \eqref{admdata} when $g_1= 31$, $g_2=25$, and $s=-11$. Here, however,
$Q(\gz)=  (1/12) (-1 - 2 \gz + 11 \gz^2)$, and clearly \eqref{positivityQ}  fails. 
\end{remark}

\bigskip

\subsection{$N=\bbc\bbp^1\times \bbc\bbp^1$ and the Proof of Theorem \ref{gen0prodcor}}
We now consider the case that $N$ is a product of Riemann spheres in more detail. Again we divide our analysis into the two cases $d>1$ and $d=1$. 
\subsubsection{$d>1$}
We are interested in the total Pontrjagin class of the manifolds $M_\gK$. It follows from Proposition \ref{k>1topprop} that the only non-vanishing Pontrjagin class of $M_\gK$ is $p_1$ which we note is not only a diffeomorphism invariant but also a homeomorphism invariant\footnote{This appeared to be a folklore result, but Diarmuid Crowley provided us the reference \cite{Sha85} for its proof.} \cite{Sha85}. So Sasaki CR manifolds $M_\gw$ with distinct $p_1$ cannot be homeomorphic.

\subsubsection{Proof of Theorem \ref{gen0prodcor} when $d>1$.}
First it follows from Remark \ref{totchern} that when $d>1$ $p_1(M_\gK)=c_1(\cald_\gK)^2-2c_2(\cald_\gK)$ is equal to
\begin{eqnarray} 
                   &=& c_1(\cald_\gK)^2 - 2\pi^*_M\bigl(\grs_2(-[\gro_0],\ldots,-[\gro_0],-[\gro_\infty],\ldots,-[\gro_\infty])             +c_2(\bbc\bbp^1\times\bbc\bbp^1)\bigr)  \notag \\  \notag
                   &=& 2(2-(d_0+1)k^1_0-(d_\infty+1)k^1_\infty)(2-(d_0+1)k^2_0-(d_\infty+1)k^2_\infty)\grg -8\grg   \\  \notag
                   &-& 2\bigl(d_0(d_0+1)k^1_0k^2_0+d_\infty(d_\infty+1)k^1_\infty k^2_\infty +(d_0+1)(d_\infty+1)(k^1_0k^2_\infty +k^1_\infty k^2_0)\bigr)\grg  \\  \notag
                   &=& \bigl(-4(k^1_0+k^2_0+k^1_\infty+k^2_\infty) + 2k^1_0k^2_0+2k^1_\infty k^2_\infty-4d_0k^1_0-4d_0k^2_0 -4d_\infty k^1_\infty  \\     \notag
                   &&  -4d_\infty k^2_\infty +2d_0k^1_0k^2_0+2d_\infty k^1_\infty k^2_\infty\bigr)\grg \label{p1eqn}
 \end{eqnarray} 
where $\grg=\pi^*_M[\grO_1\wedge\grO_2]$. Here we have used
$$\grs_2=\bigl(d_0(d_0+1)k^1_0k^2_0+d_\infty(d_\infty+1)k^1_\infty k^2_\infty +(d_0+1)(d_\infty+1)(k^1_0k^2_\infty +k^1_\infty k^2_0\bigr)[\grO_1\wedge\grO_2].$$

\begin{proposition}\label{infcontman}
Consider the set $\{M_K\}$ of  fiber joins with $K= \begin{pmatrix}
    k_0 & 2 \\
   k_\infty & 2
\end{pmatrix}$, $k_0,k_\infty\in\bbz^+$, $d>1$,
and fixed integral cohomology ring $R_\bbz$. Then there are finitely many diffeomorphism types of both  cone decomposable and cone indecomposable fiber joins within the set $\{M_{K}\}$ whose integral cohomology ring is isomorphic to $R_\bbz$ has a finite number of diffeomorphism types.
Therefore, at least one such Sasaki CR manifold of both types admits a countable infinity of inequivalent contact structures of Sasaki type.
\end{proposition}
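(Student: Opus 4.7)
The plan is to combine formality of the total space with Sullivan's finiteness theorem for manifolds of fixed rational homotopy type and fixed Pontrjagin classes, and then use the contact invariant $c_1(\cald_K)$ to separate infinitely many Sasaki contact structures on a common smooth model.

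First I would verify that the integer cohomology ring is the same for every $K$ in the family. Since $d>1$ the Euler class of the sphere bundle lies in $H^{2d+2}(\bbc\bbp^1\times\bbc\bbp^1,\bbz)=0$, so it vanishes, and by Proposition \ref{k>1topprop}(1) the cohomology groups match those of $\bbc\bbp^1\times\bbc\bbp^1\times S^{2d+1}$. A Leray--Hirsch lift $u\in H^{2d+1}(M_K,\bbz)$ of the fiber class satisfies $u^2\in H^{4d+2}(M_K,\bbz)=0$ for $d\geq 2$ (since $\dim M_K=2d+5<4d+2$), so the ring is uniformly $H^*(\bbc\bbp^1\times\bbc\bbp^1,\bbz)\otimes H^*(S^{2d+1},\bbz)$ and the hypothesis ``fixed $R_\bbz$'' imposes no additional constraint.

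Next I would check that the Pontrjagin classes of $M_K$ are independent of $(k_0,k_\infty)$. By Remark \ref{totchern} and the vanishing of $[\Omega_i]^2$ in $H^*(\bbc\bbp^1\times\bbc\bbp^1,\bbz)$, the Chern classes $c_k(\cald_K)$ vanish for $k\geq 3$, leaving only $p_1=c_1(\cald_K)^2-2c_2(\cald_K)$ as a potentially nontrivial Pontrjagin class. Substituting $k_0^1=k_0$, $k_0^2=2$, $k_\infty^1=k_\infty$, $k_\infty^2=2$ into Equation \eqref{p1eqn} the coefficients of $k_0$ and $k_\infty$ cancel and the computation yields
\begin{equation*}
p_1(M_K)=-8(d+1)\,\pi_M^*[\Omega_1\wedge\Omega_2],
\end{equation*}
which does not depend on $K$. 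Since $\bbc\bbp^1\times\bbc\bbp^1$ is simply connected and $n=2\leq d$, Lemma \ref{forlem} forces each $M_K$ to be formal. Combining constant rational cohomology ring, constant Pontrjagin classes, and formality, Sullivan's Theorem 13.1 of \cite{Sul77} yields that $\{M_K\}$ is distributed among finitely many diffeomorphism types; intersecting with the cone decomposable locus $k_0=k_\infty$ or the cone indecomposable locus $k_0\neq k_\infty$ preserves this finiteness.

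Finally, from Corollary \ref{c1cor} one has
\begin{equation*}
c_1(\cald_K)=\bigl(2-(d_0+1)k_0-(d_\infty+1)k_\infty\bigr)\pi_M^*[\Omega_1]-2d\,\pi_M^*[\Omega_2],
\end{equation*}
whose first coefficient assumes infinitely many distinct values as $(k_0,k_\infty)$ varies over either locus. Since each locus is countably infinite yet contained in finitely many diffeomorphism classes, the pigeonhole principle produces a single smooth manifold supporting a countable infinity of Sasaki CR structures $(\cald_K,J)$; their distinct $c_1$'s, being contact invariants, imply pairwise inequivalence. The main obstacle is the Pontrjagin calculation, but the degeneracy $[\Omega_i]^2=0$ kills higher Chern classes and the cancellation in \eqref{p1eqn} makes the $K$-independence of $p_1$ immediate.
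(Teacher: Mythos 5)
Your proposal is correct and matches the paper's own argument essentially step for step: the same substitution into Equation \eqref{p1eqn} giving the constant class $p_1=-8(d_0+d_\infty+2)\grg=-8(d+1)\grg$, vanishing Euler class and formality via Lemma \ref{forlem}, Proposition \ref{k>1topprop} plus Sullivan's Theorem 13.1 for finiteness of diffeomorphism types, and the linear dependence of $c_1(\cald_K)$ from \eqref{c1noncoeqn} to produce infinitely many inequivalent contact structures on some fixed diffeomorphism type in each locus. Your added checks (the ring structure via $u^2=0$ and the vanishing of higher Chern/Pontrjagin classes) merely make explicit details the paper leaves implicit.
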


\begin{proof}
Notice that if we choose $k^2_0=k^2_\infty=2$ we obtain a two parameter family of sequences $M_{k^1_0,k^1_\infty}$ with constant first Pontrjagin class, namely from the equation above we see that the first Pontrjagin class is
\begin{equation}\label{specialp1}
p_1(M_{k^1_0,k^1_\infty})=-8(d_0+d_\infty+2)\grg.
\end{equation}
So the set $\{M_K\}$ with $K$ as in the Proposition has the same Pontrjagin class $p_1$, and since $d>1$ the Euler class of the sphere bundle vanishes. So by Lemma \ref{forlem} these manifolds are formal. It follows from this and Proposition \ref{k>1topprop} that the hypothesis of Theorem 13.1 of \cite{Sul77} is satisfied. Hence, there are at most a finite number of diffeomorphism types. When $k_0=k_\infty=k$ the Sasaki CR structure is cone decomposable; otherwise, it is cone indecomposable. But also one easily sees from Equation \eqref{c1noncoeqn} that $c_1(\cald_K)$ depends linearly on the parameters $k_0$ and $k_\infty$ giving rise to infinitely many inequivalent contact structures. 
\end{proof}

\subsubsection{$d=1$}  When $d=1$ Sullivan's minimal model is not formal \cite{KrTr91,BFMT16} since a triple Massey product is non-vanishing; however, Corollary 2.3 of \cite{KrTr91} says that, nevertheless, there is a finite number of diffeomorphism types if the equation
\begin{equation}\label{p1=2e}
p_1\equiv 2e\mod 4
\end{equation}
holds.

\subsubsection{Proof of Theorem \ref{gen0prodcor} when $d=1$.}

\begin{proposition}\label{infcontman2}
Consider the set $\{M_K\}_{k_1,k_2}$ of  fiber joins with $K=\begin{pmatrix}
     k_1+b &k_2+c \\
     k_1 & k_2
\end{pmatrix}$, $d=1$,
and fixed integral cohomology ring $R_\bbz$. Then for any fixed pair $(b,c)$ there exists a countable subsequence $\{M_k\}\subset \{M_K\}$ of cone indecomposable fiber joins within the set $\{M_{k}\}$ and finitely many diffeomorphism types whose integral cohomology ring is isomorphic to $R_\bbz$. Therefore, at least one such diffeomorphism class admits a countable infinity of inequivalent contact structures of Sasaki type for both spin and non-spin manifolds.
\end{proposition}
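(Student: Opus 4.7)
The plan is to parallel the strategy of Proposition \ref{infcontman}, accounting for two new obstacles in the $d=1$ setting: the Euler class of the $S^3$-bundle is non-trivial and enters the integral cohomology ring (by Proposition \ref{k>1topprop}(2)), and the minimal model is not formal, so Sullivan's Theorem 13.1 of \cite{Sul77} must be replaced by Corollary 2.3 of \cite{KrTr91}, whose hypothesis is the congruence $p_1 \equiv 2e \pmod{4}$.

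I would begin by computing the relevant invariants. From Proposition \ref{k>1topprop}(2) the Euler class of the bundle is
\[
e = k^1_0 k^2_\infty + k^2_0 k^1_\infty = 2k_1k_2 + bk_2 + ck_1,
\]
and the stable isomorphism $TM_K \oplus \bbr \cong \pi_M^*(TN \oplus E_\bbr)$ (with $p_1(TN)=0$) gives
\[
p_1(M_K) \;=\; \pi_M^*\bigl(c_1(L_0)^2 + c_1(L_\infty)^2\bigr) \;=\; 2\bigl((k_1+b)(k_2+c)+k_1k_2\bigr)\gamma.
\]
A direct expansion yields the key identity $p_1 - 2e = 2bc$, independent of $(k_1,k_2)$. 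Consequently the Kreck--Triantafillou congruence $p_1 \equiv 2e \pmod{4}$ is equivalent to $bc$ being even, and $p_1$ is fixed modulo $e$ by $(b,c)$ alone. Cone indecomposability is equivalent to $\det K = bk_2 - ck_1 \neq 0$, which excludes at most one rational ray from $(\bbz^+)^2$, leaving a countably infinite subfamily. The spin dichotomy is handled by Proposition \ref{spinman}(2): $M_K$ is spin iff both $b,c$ are even, so $(b,c)=(2,2)$ realises the spin case with $bc=4$ (congruence holds) and $(b,c)=(2,1)$ realises the non-spin case with $bc=2$ (congruence still holds).

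Next I would invoke Corollary 2.3 of \cite{KrTr91}: under the congruence, for each isomorphism class $R_\bbz$ of the integral cohomology ring there are only finitely many diffeomorphism types among the $\{M_K\}_{k_1,k_2}$ with that $R_\bbz$. To distinguish contact structures I use that by Corollary \ref{c1cor}(1) the map $\pi_M^* : H^2(N,\bbz) \to H^2(M_K,\bbz)$ is an isomorphism, so
\[
c_1(\cald_K) = (2-2k_1-b)\pi_M^*[\grO_1] + (2-2k_2-c)\pi_M^*[\grO_2]
\]
takes distinct values for distinct $(k_1,k_2)$, producing inequivalent Sasaki CR structures. A pigeonhole argument applied to the infinite cone-indecomposable subfamily, sorted by the finite collection of diffeomorphism types guaranteed by \cite{KrTr91}, then produces at least one diffeomorphism class carrying countably many pairwise inequivalent contact structures of Sasaki type, in both the spin and non-spin regime.

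The main obstacle is the pigeonhole step. The identity $p_1 - 2e = 2bc$ shows that $(e,p_1)$ moves along a one-parameter affine curve as $(k_1,k_2)$ vary with $(b,c)$ fixed; Kreck--Triantafillou produces only finite ambiguity for each fixed $(e,p_1)$, and one must book-keep these finite collections as $e$ ranges over its (infinite) set of values to conclude that some diffeomorphism class is visited infinitely often. Verifying this recurrence carefully - exploiting that $p_1$ is completely determined by $e$ and $(b,c)$, so the classifying data effectively depends on the single integer $e$ - is the heart of the argument and the technical content that makes the countable infinity of contact structures on a single diffeomorphism type possible.
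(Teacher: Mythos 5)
Your overall architecture (compute $e$, $p_1$, $c_1(\cald_K)$, check the congruence \eqref{p1=2e}, invoke Corollary 2.3 of \cite{KrTr91}, then pigeonhole) is the same as the paper's, and your stable tangent bundle computation of $p_1(M_K)$ is correct \emph{as a cohomology class}; indeed your representative $2\bigl((k_1+b)(k_2+c)+k_1k_2\bigr)\gamma=(2e+2bc)\gamma$ agrees with the paper's $p_1(M_K)=2bc\,\pi^*(x_1x_2)$ (obtained by pulling back from the Bott quotient $M_3(0,b,c)$) precisely because $2e\gamma=0$ in $H^4(M_K,\bbz)\cong\bbz_e$. But that is exactly the problem with your ``key identity'' $p_1-2e=2bc$: since $p_1(M_K)$ is a torsion class of order $e$, it has no canonical integer lift, and the mod $4$ congruence depends on which lift is fed into \cite{KrTr91}. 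With your lift the criterion becomes ``$bc$ even'', independent of $(k_1,k_2)$, so for $b,c$ both odd \emph{no} admissible subsequence exists and the statement, which is claimed for \emph{any} fixed pair $(b,c)$, is out of reach; choosing $(b,c)=(2,2)$ and $(2,1)$ produces examples but not the asserted generality. The paper works with the lift $2bc$ coming from the regular quotient and obtains a parity condition on $(k_1,k_2)$ (it writes $bk_2+ck_1\equiv 0\mod 2$; the exact condition is $bc+bk_2+ck_1\equiv 0\mod 2$), which can be arranged along a countable subsequence for every $(b,c)$. Note also that your lift, applied to Proposition \ref{g0homeo}, would wrongly restrict that argument to $k\equiv l\mod 2$, so at the very least you must justify which lift is the one entering Corollary 2.3 of \cite{KrTr91} rather than assert the identity as unambiguous.

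The more serious gap is the endgame, which you explicitly leave open (``the heart of the argument'') and which cannot be completed along the route you sketch. You propose to track the finite Kreck--Triantafillou ambiguities as $e$ varies and to show some diffeomorphism class is ``visited infinitely often''. But $e$ is the order of the torsion subgroup $H^4(M_K,\bbz)\cong\bbz_e$ computed in Proposition \ref{k>1topprop}, hence a homeomorphism invariant, and for fixed $(b,c)$ the integer $e=2k_1k_2+bk_2+ck_1$ is unbounded on $(\bbz^+)^2$ and takes each value only finitely often; consequently no diffeomorphism type can recur infinitely often in your one-parameter bookkeeping, and the proposed recurrence argument is vacuous. The paper's proof is organized the other way around: the integral cohomology ring $R_\bbz$ is fixed in the hypothesis, Corollary 2.3 of \cite{KrTr91} (once the congruence is arranged) is used to bound the number of diffeomorphism types compatible with that ring, and the countable infinity of inequivalent contact structures is supplied by the injectivity of $(k_1,k_2)\mapsto c_1(\cald_K)=(2-2k_1-b)\pi^*_M[\grO_1]+(2-2k_2-c)\pi^*_M[\grO_2]$ (your sign-corrected formula), after which the pigeonhole runs over that finite list of diffeomorphism types. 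As written, your proposal never reaches a legitimate pigeonhole, so the final conclusion of the proposition is not established.
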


\begin{proof}
Let $M_0,M_\infty$ be the Sasaki 5-manifolds given by the $S^1$ bundles over $\bbc\bbp^1\times \bbc\bbp^1$ corresponding to the K\"ahler classes $\gro_0,\gro_\infty$ of Equation \eqref{d0dinftyeqns}, and consider 7 dimensional  the fiber join $M_0\star_f M_\infty$ with Sasaki CR structures parameterized by the Neron-Severi lattice $\calk_{NS}(\bbc\bbp^1\times \bbc\bbp^1)=span_{\bbz^+}\{[\grO_1],[\grO_2]\}$. 
The projectivization of Equation \eqref{KSprojeqn} by the regular Reeb vector field $\xi_1$ is the twist 1 stage 3 Bott manifold $M_3(0,k^1_0-k^1_\infty,k^2_0-k^2_\infty)$ whose K\"ahler geometry was studied in \cite{BoCaTo17}. The cohomology ring is
$$H^*(M_3(0,b,c),\bbz)=\bbz[x_1,x_2,x_3]/\bigl(x_1^2,x_2^2,x_3(bx_1+cx_2+x_3)\bigr)$$
where $x_1,x_2,x_3$ are 2-classes. Since the base is untwisted, the $x_i$ represent the volume form of the two $\bbc\bbp^1$s. Moreover, its first Pontrjagin class  is 
\begin{equation}\label{p1}
p_1(M_3(0,b,c))=2bcx_1x_2.
\end{equation}
where $b=k^1_0-k^1_\infty$ and $c=k^2_0-k^2_\infty$. Since $M_K$ is an $S^1$ bundle over $M_3(0,b,c)$, the $S^1$ action trivializes the vertical piece of the tangent bundle $M_K$. Thus, the characteristic classes of $M_K$ are the pullbacks of the characteristic classes of $M_3(0,b,c)$. So the first Pontrjagin class is
\begin{equation}\label{2p1}
p_1(M_K)=2(k^1_0-k^1_\infty)(k^2_0-k^2_\infty)\pi^*(x_1x_2)=2bc\pi^*(x_1x_2).
\end{equation}

The existence of the sequence $\{M_k\}$ depends on the parity of $b$ and $c$. To satisfy the condition \eqref{p1=2e} we must choose the subsequence $\{M_k\}$ such that 
\begin{equation}\label{pe}
bk_2+ck_1\equiv 0\mod 2. 
\end{equation}
It is easy to see that such a countable sequence $\{M_k\}$ always exists. Now from \eqref{c1noncoeqn} we have
\begin{equation}\label{c1kl}
c_1(\cald_{K_{k,b,c}})=\bigl(2-2k_1+b\bigr)\pi^*_M[\grO_1] +\bigl(2-2k_2+c\bigr)\pi^*_M[\grO_2].
\end{equation}
So for each pair $(b,c)$ there exists a countable infinity of inequivalent Sasaki contact structures. Moreover, the manifolds $M_{k,b,c}$ are spin when $b$ and $c$ are both even, and non-spin otherwise for all such $k$. Since \eqref{p1=2e} is satisfied Corollary 2.3 of \cite{KrTr91} implies that for both spin and non-spin manifolds the sequence $\{M_{k,b,c}\}$ admits only a finite number of diffeomorphism types.
\end{proof}

Then Propositions \ref{infcontman} and \ref{infcontman2} together with part (1) of Theorem \ref{noncothm} prove Theorem \ref{gen0prodcor}. \hfill $\Box$

\begin{remark}\label{noSE}
For the contact invariant $c_1(\cald_{k,b,c})$ there are three cases to consider
\begin{enumerate}
\item $c_1(\cald_K)$ is negative definite which occurs if and only if no column of $K$ is $(1,1)^t$;
\item $c_1(\cald_K)$ is a negative multiple of $\pi^*_M[\grO_i]$ for some $i=1,2$ which occurs if and only if exactly one column of $K$ is $(1,1)^t$;
\item $c_1(\cald_K)=0$ which occurs if and only if $K=\begin{pmatrix} 1 &1 \\ 
                         1  & 1
                           \end{pmatrix}$.
\end{enumerate}

The only possible SE metric in the collection $\gK(M_\gw)$ occurs in case (3). This is the iterated join $S^3\star S^3\star S^3$ as described in \cite{BG00a} and the SE metric is homogenous with a regular Reeb vector field. 
\end{remark}

\begin{proposition}\label{infcon7man}
For each $b,c\in\bbz$ we consider the set $\{M_{K_{k^i,b,c}}\}_{k^1,k^2}$ of  $S^3$ bundles over $\bbc\bbp^1\times \bbc\bbp^1$. Then
\begin{enumerate}
\item $M_{K_{k^i,b,c}}$ is spin if and only if $b$ and $c$ are both even
\item $M_{K_{k^i,b,c}}$ is cone decomposable if and only if $ck^1=bk^2$.
\end{enumerate}
Each type admits a countable infinity of inequivalent  Sasaki contact structures such that the Sasaki CR structure $(\cald_K,J)$ admits an open set of extremal Sasaki structures in $\gt^+_{sph}$. Moreover, among the elements of $\{M_{K_{k^i,b,c}}\}_{k^1,k^2}$ there are only a finite number of diffeomorphism types with the same integral cohomology ring.
\end{proposition}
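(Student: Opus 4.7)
The plan is to establish each claim by direct calculation from the data of $K = K_{k^i,b,c} = \begin{pmatrix} k^1+b & k^2+c \\ k^1 & k^2 \end{pmatrix}$ and the previously established results. Items (1) and (2) are immediate. For (1), since $d=1$ we have $d_0=d_\infty=0$, both even, so case (2) of Proposition \ref{spinman} applies: $w_2(M_K)=0$ iff the column sums $k^1_0+k^1_\infty=2k^1+b$ and $k^2_0+k^2_\infty=2k^2+c$ are both even, equivalently iff $b$ and $c$ are both even. For (2), Corollary \ref{fiberjoindecomp} identifies cone decomposability with colinearity of the K\"ahler classes $[\gro_0],[\gro_\infty]$, equivalently with $\det K=0$; a direct computation gives $\det K_{k^i,b,c}=bk^2-ck^1$, yielding the stated criterion.

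Next I would show that, for each fixed $(b,c)$, the family produces a countable infinity of inequivalent Sasaki contact structures and that every member carries an extremal Sasaki representative on its regular ray. From \eqref{c1noncoeqn} and the fact that $\pi^*_M$ is an isomorphism on $H^2$ (Corollary \ref{c1cor}(1)), the first Chern class
$$c_1(\cald_{K_{k^i,b,c}}) = (2-2k^1-b)\,\pi^*_M[\grO_1] + (2-2k^2-c)\,\pi^*_M[\grO_2]$$
takes a distinct value for every $(k^1,k^2)\in(\bbz^+)^2$, distinguishing the contact structures up to equivalence. Proposition \ref{speccaseprop}(1) (the $g=0$ case, with $N=\bbc\bbp^1\times\bbc\bbp^1$) provides an extremal Sasakian structure on the regular ray $\xi_1$, and the openness theorem of \cite{BGS06} then yields an open set of extremal metrics in $\gt^+(\cald_K,J)$; since $\xi_1$ lies in the interior of the subcone $\gt^+_{sph}(\cald_K,J)$, this open set intersects $\gt^+_{sph}$ in a relatively open neighborhood of $\xi_1$, establishing the extremality claim.

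The main obstacle is the final diffeomorphism-finiteness statement, which requires a topological argument modelled on Proposition \ref{infcontman2} via the Kreck--Triebsch classification. I would compute the relevant invariants as in that proof: the first Pontrjagin class is $p_1(M_K)=2bc\,\pi^*(x_1 x_2)$, independent of $(k^1,k^2)$, while the integer controlling the $\bbz_e$ torsion in $H^4$ (see Proposition \ref{k>1topprop}(2)) is
$$e = k^1_0 k^2_\infty + k^2_0 k^1_\infty = 2k^1k^2 + bk^2 + ck^1.$$
The congruence \eqref{p1=2e}, namely $p_1\equiv 2e\pmod 4$, then reduces to $bc - bk^2 - ck^1 \equiv 0 \pmod 2$. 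I would verify that this parity condition is satisfied on a countably infinite subset of $(k^1,k^2)\in(\bbz^+)^2$ in each of the four parity classes of $(b,c)$: all pairs when $b$ and $c$ are both even (the spin case), pairs with the appropriate $k^i$ even when exactly one of $b,c$ is odd, and pairs with $k^1+k^2$ odd when both $b$ and $c$ are odd.

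On any such subsequence Corollary 2.3 of \cite{KrTr91} limits the number of diffeomorphism types with any prescribed integral cohomology ring to a finite quantity, while $c_1(\cald_K)$ continues to take infinitely many distinct values by the preceding paragraph. A pigeonhole argument then isolates a single diffeomorphism type supporting countably many inequivalent Sasaki contact structures, each of which carries an open set of extremal Sasaki metrics in $\gt^+_{sph}$ by the extremality argument above. Applied separately to the spin subsequence ($b,c$ both even) and to a non-spin subsequence (any of the remaining three parity classes) this completes the proof in both cases.
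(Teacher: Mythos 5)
Your argument for the stated claims follows essentially the same route as the paper's (very terse) proof: spin-ness from the parity of the coefficients of $c_1(\cald_K)$ (equivalently the column sums of $K$), cone decomposability from $\det K_{k^i,b,c}=bk^2-ck^1$ together with Corollary \ref{fiberjoindecomp}, the countable infinity of inequivalent contact structures from the contact invariance of $c_1(\cald_K)$, extremality from Proposition \ref{speccaseprop}(1) plus openness, and diffeomorphism finiteness by re-running the Kreck--Triantafillou argument of Proposition \ref{infcontman2} (the paper's proof simply cites that proposition; your reduction of \eqref{p1=2e} to $bc\equiv bk^2+ck^1 \pmod 2$ is in fact the more careful arithmetic, the paper's stated condition $bk_2+ck_1\equiv 0\pmod 2$ agreeing with it only when $bc$ is even). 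All of this is fine and is what the paper does.

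The one step that does not hold up is your closing pigeonhole claim that a single diffeomorphism type in this family carries countably many of these contact structures. For $d=1$ and $(b,c)$ fixed, Proposition \ref{k>1topprop}(2) gives $H^4(M_K,\bbz)\cong\bbz_e$ with $e=2k^1k^2+bk^2+ck^1\geq k^1+k^2$, so any fixed integral cohomology ring is realized by only finitely many pairs $(k^1,k^2)$; the Kreck--Triantafillou finiteness is per cohomology ring, and hence cannot be parlayed into infinitely many members of the family sharing one diffeomorphism type. (That stronger conclusion is what Theorem \ref{gen0prodcor} asserts, but the paper obtains it from the $d>1$ family of Proposition \ref{infcontman}, where the cohomology and $p_1$ are constant along the family.) Fortunately the statement of Proposition \ref{infcon7man} does not claim this: it only asserts countably many inequivalent contact structures of each type within the family, plus finiteness of diffeomorphism types among members with the same cohomology ring, so your extra sentence is superfluous rather than fatal. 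Note also that the per-ring finiteness for the \emph{whole} family (not just your congruence subsequences) follows already from the observation above that fixing $e$ fixes $(k^1,k^2)$ up to finitely many choices, which closes the small gap left by restricting to the subsequences where \eqref{p1=2e} holds.
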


\begin{proof}
From \eqref{c1kl} $M_K$ are spin if and only if $b$ and $c$ are both even, so infinite sequences of both occur.
Since $c_1(\cald)$ is a contact invariant our construction gives a countable infinity of such structures. Moreover, by Proposition \ref{infcontman} there are finitely many diffeomorphism types among the set $\{M_{K_{k^i,b,c}}\}_{k^1,k^2}$. 
\end{proof}

Consider the cone indecomposable $S^3$ fiber joins described in case (1) of Theorem \ref{noncothm}, namely those with matrix 
\begin{equation}\label{Kkl}
K= \begin{pmatrix}
    k & l \\
l & k
\end{pmatrix}
\end{equation}
with $k,l\in \bbz^+$ and $k\neq l$. Without loss of generality we can take $k>l$. 

\begin{proposition}\label{g0homeo}
Let $\{M_K\}_{k,l}$ be the set of cone indecomposable fiber joins over $\bbc\bbp^1\times \bbc\bbp^1$ with $K$ given by Equation \eqref{Kkl} and $k>l$. Each such pair $(k,l)$ defines a unique homeomorphism type and the diffeomorphism type is defined up to finite ambiguity.
\end{proposition}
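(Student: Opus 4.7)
The plan is to use two homeomorphism invariants---the integral cohomology ring and the first Pontrjagin class (homeomorphism-invariant by Shalen~\cite{Sha85})---to show that the pair $(k,l)$ is recovered from the underlying manifold, and then to bound the smooth ambiguity via Corollary~2.3 of~\cite{KrTr91} as in the proof of Proposition~\ref{infcontman2}.

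First I would record the invariants. For $K=\begin{pmatrix}k & l\\ l & k\end{pmatrix}$, Proposition~\ref{k>1topprop}(2) gives $H^4(M_K,\bbz)=\bbz_{e}$ with
\[
e \;=\; k^1_0 k^2_\infty + k^2_0 k^1_\infty \;=\; k^2+l^2,
\]
and Equation~\eqref{2p1} with $b=k-l$, $c=l-k$ yields $p_1(M_K)=-2(k-l)^2\,\pi^*(x_1x_2)\equiv 4kl\,\pi^*(x_1x_2)\pmod e$, using $-2(k-l)^2=-2e+4kl$.

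To prove injectivity of $(k,l)\mapsto$\,homeomorphism type, I would assume $M_K\approx M_{K'}$ for pairs $(k,l)\ne(k',l')$ with $k>l$, $k'>l'$. Matching $H^4$-torsion forces $k^2+l^2=k'^2+l'^2=e$. A homeomorphism induces a ring isomorphism whose action on $H^2=\bbz\langle y_1,y_2\rangle$ is a matrix $\begin{pmatrix}\alpha&\gamma\\ \beta&\delta\end{pmatrix}\in GL_2(\bbz)$; the relations $y_i^2=0\in H^4=\bbz_e$ force $\alpha\beta\equiv\gamma\delta\equiv 0\pmod e$, and the induced action on $H^4$ is multiplication by $u=\alpha\delta+\beta\gamma\in\bbz_e^{\times}$. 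A case analysis of which of $\alpha,\beta,\gamma,\delta$ are divisible by each prime $p\mid e$, using $\alpha\delta-\beta\gamma=\pm 1$, rules out the subcases $\{p\mid\alpha,\,p\mid\gamma\}$ and $\{p\mid\beta,\,p\mid\delta\}$ and forces $u\equiv\pm 1\pmod p$ for every prime $p\mid e$. Matching $p_1$ then gives $4kl\equiv\pm 4k'l'\pmod p$ for each $p\mid e$. Translating to the Gaussian integer $z=k+li$ of norm $e$, each prime $p\mid e$ splits as $\pi_p\bar\pi_p$ and $(k,l)$ determines a side-assignment telling which of $\pi_p,\bar\pi_p$ divides $z$; I would verify that distinct ordered pairs $(k,l)\ne(k',l')$ with $k>l$, $k'>l'$ give distinct side-assignments and thus distinct values of $4kl\pmod e$ even after the allowed independent sign flips, contradicting the hypothesis.

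For the diffeomorphism finiteness claim I would verify the congruence~\eqref{p1=2e}:
\[
p_1-2e \;=\; -2(k-l)^2-2(k^2+l^2) \;=\; -4(k^2-kl+l^2) \;\equiv\; 0\pmod 4,
\]
so Corollary~2.3 of~\cite{KrTr91} applies to bound the diffeomorphism types within the fixed integral cohomology and Pontrjagin data, and hence within the homeomorphism class of $M_K$. The main obstacle is the Gaussian-integer endgame of the injectivity argument: the group $\{u\in\bbz_e^{\times}:u\equiv\pm 1\pmod p\ \forall\, p\mid e\}$ has order $2^{\omega(e)}$, so a direct congruence comparison cannot suffice when $e$ has several two-square representations (e.g.\ $65=8^2+1^2=7^2+4^2$), and the Gaussian prime factorisation of $e$ must be exploited essentially.
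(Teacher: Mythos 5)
Your proposal follows the same skeleton as the paper's proof: the invariants are the torsion order $e=k^2+l^2$ of $H^4(M_K,\bbz)$ from Proposition \ref{k>1topprop} and the class $p_1(M_K)$ from \eqref{2p1}, with topological invariance of integral $p_1$ taken from \cite{Sha85} (the author is Sharma, not Shalen), and the finiteness of diffeomorphism types comes from Corollary 2.3 of \cite{KrTr91} via the congruence \eqref{p1=2e}, which you verify exactly as the paper does ($p_1-2e=-4(k^2-kl+l^2)\equiv 0 \bmod 4$). Where you diverge is the injectivity step. The paper simply notes that $(k,l)\mapsto(p_1,\ge)=\bigl(-2(k-l)^2,(k^2+l^2)\bigr)$ is injective for $k>l\geq 1$ (it recovers $k-l$ and $k+l$) and concludes, in effect comparing these classes through the canonical generator $\pi^*(x_1x_2)$; it does not discuss which automorphisms of $H^*(M_K)$ a homeomorphism can induce. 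You are right that if one only retains the abstract homeomorphism type, $p_1$ is an element of $H^4\cong\bbz_e$ pinned down only up to the unit $u$ by which the induced ring isomorphism acts on $H^4$, so your finer set-up is a legitimate strengthening of what the paper writes.

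The problem is that, having raised the bar, your argument does not clear it: the decisive claim --- that for $(k,l)\neq(k',l')$ with $k>l$, $k'>l'$ and $k^2+l^2=k'^2+l'^2=e$ the residues $4kl$ and $4k'l'$ lie in different orbits under the admissible units --- is only announced (``I would verify \dots'') and you yourself flag it as the main obstacle. That is a genuine gap, and it is not a routine verification: the unit group is $\{u:u^2\equiv 1 \pmod e\}$ (note your constraint should be $2\alpha\beta\equiv 2\gamma\delta\equiv 0\pmod e$, which matters when $k\equiv l\pmod 2$ so that $e$ is even; from $\alpha\delta-\beta\gamma=\pm1$ one then gets $u^2\equiv1\pmod e$, hence $u\equiv\pm1$ modulo each odd prime \emph{power}, not just each prime), its order grows like $2^{\omega(e)}$, and $e$ can have many essentially different two-square decompositions. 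The statement does appear to be true and your Gaussian-integer plan is a plausible route: for instance, if $e=p_1p_2$ is odd and squarefree with $z=\pi_1\pi_2$, $z'=\pi_1\bar\pi_2$, the exact identities $2kl-2k'l'=2\,\mathrm{Im}(\pi_2^2)\,\mathrm{Re}(\pi_1^2)$ and $2kl+2k'l'=2\,\mathrm{Re}(\pi_2^2)\,\mathrm{Im}(\pi_1^2)$, together with the bounds $0<|\mathrm{Re}(\pi_j^2)|,|\mathrm{Im}(\pi_j^2)|<p_j$, show a simultaneous collision modulo $p_1$ and modulo $p_2$ would force $p_1<p_2$ and $p_2<p_1$. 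But the general case (several split primes, prime powers, the even prime, and imprimitive representations with $\gcd(k,l)>1$) still has to be written out before your injectivity argument is complete; as submitted, that half of the proposition is not proved, whereas the paper's shorter argument simply never enters this analysis.
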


\begin{proof}
From \eqref{2p1} we see that 
\begin{equation}\label{p1kl}
p_1(M_K)=-2(k-l)^2\pi^*x_1x_2.
\end{equation}
and from Proposition \ref{k>1topprop} we have
\begin{equation}\label{kle}
\ge=(k^2+l^2)x_1x_2.
\end{equation}
For each such ordered pair, there is a unique pair of cohomology classes $p_1$ and $\ge$, and since $p_1$ is a homeomorphism invariant we have inequivalent homeomorphism types for distinct ordered pairs $(k,l)$. Then Corollary 2.3 of \cite{KrTr91} implies that the set of possible diffeomorphism types is finite since $p_1\equiv 2\ge \mod 4$.

\end{proof}

\begin{remark}\label{homotopyequivrem}
Although distinct pairs $(k,l)$ are never homeomorphic, if they have the same Euler class $\ge$, they could be homotopy equivalent; however, we do not determine this. 
\end{remark}

\subsection{More General $N$}
As seen from Proposition \ref{gennonco}, our results generalize to the case where $N$ is a local product with  nonnegative cscK metrics. So we can replace  $\bbc\bbp^1$ with any dimensional complex projective space (as the second Betti numbers remain equal to one) and consider arbitrary products of them. We summarize this in a statement below.

\begin{proposition}
Let $N=\bbc\bbp^{n_1}\times \cdots \times \bbc\bbp^{n_k}$ or $N=\bbc\bbp^{n_1}\times \cdots \times \bbc\bbp^{n_k} \times T^2$ and let
$$M_\gw=M_{1}*_f \cdots *_fM_{d+1}$$ 
be a fiber join where $M_{j}$ is a Sasaki manifold which as a principal $S^1$ bundle over $N$
has Euler class $[\gro_{0}] \in H^2(N,\bbz)$ for $j=1,\dots,d_0+1$ and Euler class $[\gro_{\infty}] \in H^2(N,\bbz)$ for
$j=d_0+2,\dots,d_0+ d_\infty+2$, where $d=d_0+d_\infty+1$, and the K\"ahler forms $\gro_{0},\gro_{\infty}$ have constant scalar curvature. Then the regular ray of $M_\gw$ is extremal (up to isotopy). Thus, the Sasaki subcone $\gt^+_{sph}$ of $M_\gw$ will always contain an open set of Sasaki extremal structures.
\end{proposition}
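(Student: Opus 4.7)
The plan is to realize this statement as a direct corollary of Proposition \ref{gennonco}. The essential task is to verify the two hypotheses of that proposition for the given base $N$: (i) $N$ is a local K\"ahler product of non-negative CSC K\"ahler metrics, and (ii) $M_\gw$ is super admissible. Once both are in hand, Proposition \ref{gennonco} produces an extremal representative in the regular ray, and the openness theorem of \cite{BGS06} then yields an open subset of extremal Sasakian structures around $\xi_1$ in $\gt^+(\cald_\gw,J)$; since $\xi_1\in\gt^+_{sph}(\cald_\gw,J)$, this open set meets $\gt^+_{sph}$, proving the final claim.

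Hypothesis (i) is immediate: each $\bbc\bbp^{n_i}$ carries the Fubini-Study K\"ahler-Einstein metric (positive CSC) and $T^2$ carries the flat metric (zero CSC), so every factor is a non-negative CSC K\"ahler manifold. For admissibility, set $L_0 := L_1^*$ (equal to $L_j^*$ for all $j\le d_0+1$) and $L_\infty := L_{d_0+2}^*$ (equal to $L_j^*$ for all $j\ge d_0+2$); then $E=\oplus_{j=1}^{d+1}L_j^* = E_0\oplus E_\infty$ with $E_0 = L_0\otimes \bbc^{d_0+1}$ and $E_\infty = L_\infty\otimes \bbc^{d_\infty+1}$, both projectively flat, which verifies conditions (1) and (2). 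For condition (3), the K\"unneth decomposition gives
$$[\gro_0]-[\gro_\infty]=\sum_{a} m_a [\Omega_a^0]\in H^2(N,\bbz),$$
where $[\Omega_a^0]$ is the pullback of the integral K\"ahler generator of the $a$-th factor. Setting $\Omega_a := |m_a|\,\Omega_a^0$ and $\epsilon_a := \mathrm{sgn}(m_a)$, and discarding any index with $m_a=0$ from $\cala$, converts this into
$$\frac{c_1(E_\infty)}{d_\infty+1}-\frac{c_1(E_0)}{d_0+1} = [\gro_0]-[\gro_\infty] = \sum_{a\in\cala}\epsilon_a [\Omega_a],$$
which is condition (3).

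The remaining step is super admissibility, which I expect to be the only nontrivial technical point, and it follows immediately from Remark 2 of \cite{ACGT08}: the entire K\"ahler cone of the admissible quotient is admissible provided $b_2(N_a)=1$ for every $a$ and $b_1(N_a)\ne 0$ for at most one $a$. In our setting every $\bbc\bbp^{n_i}$ has $(b_1,b_2)=(0,1)$ and the optional $T^2$ factor has $(b_1,b_2)=(2,1)$, so at most one factor (the $T^2$) has non-trivial $b_1$. Hence $M_\gw$ is super admissible and Proposition \ref{gennonco} applies, completing the argument. The only edge case is $[\gro_0]=[\gro_\infty]$, where the index set in condition (3) becomes empty; the fiber join is then colinear and cone decomposable, and extremality of the regular ray follows directly from Proposition \ref{d>1extremal}.
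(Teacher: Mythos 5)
Your proposal is correct and follows essentially the paper's own route: the paper justifies this proposition exactly by reducing to Proposition \ref{gennonco}, with super admissibility coming from Remark 2 of \cite{ACGT08} because every factor satisfies $b_2(N_a)=1$ and only the possible $T^2$ factor has $b_1\neq 0$; your explicit check of conditions (1)--(3) (absorbing the integer coefficients into rescaled CSC forms $\Omega_a$) and your separate treatment of the colinear case via Proposition \ref{d>1extremal} merely supply detail the paper leaves implicit. One caveat: when $[\gro_0]\neq[\gro_\infty]$ but the K\"unneth component $m_a$ vanishes for some factor, ``discarding that index from $\cala$'' is not literally compatible with condition (1), which requires $N$ to be the local product of the $(N_a)_{a\in\cala}$, so in that degenerate case $M_\gw$ is not admissible in the stated sense and Proposition \ref{gennonco} does not apply verbatim; there the regular quotient is a global K\"ahler product of an admissible bundle over the factors with $m_a\neq 0$ and the remaining factors, every K\"ahler class splits (the only non--simply connected factor is the single $T^2$), and extremality in every class follows from super admissibility over the smaller base together with the fact that a product of extremal K\"ahler metrics is extremal -- a gap the paper's one-line justification glosses over as well.
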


\appendix
\section{Admissible Projective Bundles}\label{APB}
The purpose of this Appendix is two-fold. We want to augment Section \ref{admsect} with a description of the so-called admissible extremal K\"ahler metrics from \cite{ACGT08}, appearing in Proposition \ref{acgtpr11}, and we want to state and prove Proposition \ref{appenprop} which we need for part (2) of Proposition \ref{speccaseprop}. Broadly speaking, we are keeping the notation faithful to the work in \cite{ACGT08} and therefore a tiny bit different from Section \ref{admsect}.

We are interested in the {\it admissible projective bundles} described in \cite{ACGT08}. These are projective bundles of the form 
$$\bbp(E_0\oplus E_\infty)\ra{1.8} N$$ 
that satisfy the following conditions:
\begin{itemize}
\item For $\cala \subset \bbn$ $N$ is covered by a product ${\tilde{N}}=\prod_{a\in \cala} N_a$ of simply connected K\"ahler manifolds $(N_a,\pm \Omega_{N_a},g_a)$ of complex dimension $d_a$. The $\pm$ means that either $+g_a$ or $-g_a$ is positive definite. We assume here that $\pm g_a$ has CSC with scalar curvature $\pm 2 d_a s_a$.
\item $E_0$ and $E_\infty$ are holomorphic projectively flat Hermitian vector bundles over $N$ of ranks $(d_0+1)$ and $(d_\infty+1)$, respectively, which satisfy
$$\frac{c_1(E_\infty)}{(d_\infty+1)}-\frac{c_1(E_0)}{(d_0+1)}=\frac{1}{2\pi}[\gro_N]$$
where $\gro_N=\sum_{a\in \cala}\gro_{N_a}$.
\end{itemize}

\begin{remark} 
To connect with the notation in Section \ref{admsect}, we note that $\gro_{N_a}=2\pi\epsilon_a\Omega_a$.
\end{remark}

\subsection{Admissible Metrics}
On admissible projective bundles as defined above we can now construct the {\it admissible metrics},  \cite{ACGT08}. Here we recover the main points of this construction.
 
We 
let $\hat{\cala}$
be the extended index set:
\begin{itemize}
\item $\hat{\cala} = \cala$, if $d_{0}=d_{\infty}=0$.
\item $\hat{\cala} =\cala \cup \{ 0 \}$, if $d_{0}>0$ and $d_{\infty}=0$.
\item $\hat{\cala} =\cala \cup \{ \infty \}$, if $d_{0}=0$ and $d_{\infty} >0$.
\item $\hat{\cala} =\cala \cup \{ 0 \} \cup \{ \infty \}$, if $d_{0} > 0$
and $d_{\infty} >0$.
\end{itemize}
Now we set $r_{0}=1$, $r_{\infty} = -1$, $s_{0} = d_{0} + 1$ and $s_{\infty} = 
-(d_{\infty} +1)$. This will correspond to $(g_{0}, \omega_{0})$ 
(or $(g_{\infty},\omega_{\infty})$) being the induced Fubini-Study structure with scalar curvature 
$d_{0}(d_{0} +1)$ (or $d_{\infty}(d_{\infty}+1)$) on each fiber of
$P(E_{0})$ (or $P(E_{\infty}))$.

Consider the circle action
on $M=\bbp(E_0\oplus E_\infty)\ra{1.8} N$ induced by the standard circle action on $E_0$. It extends to a holomorphic
$\bbc^*$ action. The open and dense set $M_0$ of stable points with respect to the
latter action has the structure of a principal circle bundle over the stable quotient.
The hermitian norm on the fibers induces via a Legendre transform a function
$\gz:M_0\rightarrow (-1,1)$ whose extension to $M$ consists of the critical manifolds
$e_0:= \gz^{-1}(1)=\bbp(E_{0} \oplus 0)$ and $e_\infty:= \gz^{-1}(-1)=\bbp(0 \oplus E_{\infty})$.
Letting $\theta$ be a connection one form for the Hermitian metric on $M_0$, with curvature
$d\theta = \sum_{a\in\hat{\cala}}\omega_{N_a}$, an admissible K\"ahler metric and form are
given (up to scale) by the respective formulas
\begin{equation}\label{g}
g=\sum_{a\in\hat{\cala}}\frac{1+r_a\gz}{r_a}g_a+\frac {d\gz^2}
{\Theta (\gz)}+\Theta (\gz)\theta^2,\quad
\omega = \sum_{a\in\hat{\cala}}\frac{1+r_a\gz}{r_a}\omega_{a} + d\gz \wedge
\theta,
\end{equation}
valid on $M_0$. Here $\Theta$ is a smooth function with domain containing
$(-1,1)$ and $r_a$, $a \in \cala$ are real numbers of the same sign as
$g_{a}$ and satisfying $0 < |r_a| < 1$. The complex structure yielding this
K\"ahler structure is given by the pullback of the base complex structure
along with the requirement $Jd\gz = \Theta \theta$. The function $\gz$ is hamiltonian
with $K= J \,grad\, \gz$ a Killing vector field. In fact, $\gz$ is the moment 
map on $M$ for the circle action, decomposing $M$ into 
the free orbits $M_{0} = \gz^{-1}((-1,1))$ and the special orbits 
$\gz^{-1}(\pm 1)$. Finally, $\theta$ satisfies
$\theta(K)=1$.

Now $g$ is a (positive definite) K\"ahler metric which extends smoothly to all of $M$ if and only if
$\Theta$ satisfies the following positivity and boundary
conditions
\begin{align}
\label{positivity}
(i)\ \Theta(\gz) > 0, \quad -1 < \gz <1,\quad
(ii)\ \Theta(\pm 1) = 0,\quad
(iii)\ \Theta'(\pm 1) = \mp 2.
\end{align}

The K\"ahler class $\Omega_{\bfr} = [\omega]$ of an admissible metric as in \eqref{g} is also called
{\it admissible} and is uniquely determined by the parameters
$r_a$, $a \in \cala$, once the data associated with $M$ (i.e.
$d_a$, $s_a$, $g_a$ etc.) is fixed. Indeed, we have
$$ \Omega_{\bfr} =\sum_{a \in \cala} \frac{[\omega_{N_a}]}{r_a} + \Xi,$$
where $\Xi$ is a certain fixed cohomology class on $M$.
In the event that $d_0=d_\infty=0$, then we have that
$\Xi$ is the Poincare dual of $2\pi(e_0+e_\infty)$.
For a more thorough description of $\Xi$, please consult Section 1.3 of \cite{ACGT08}.

Define a function $F(\gz)$ by the formula $\Theta(\gz)=F(\gz)/p_c(\gz)$, where
$p_c(\gz) = \prod_{a \in \hat{\cala}} (1 + r_a \gz)^{d_{a}}$.
Since $p_c(\gz)$ is positive for $-1<\gz<1$, conditions
\eqref{positivity}
imply the following conditions on $F(\gz)$, which are only necessary
for compactification of the metric $g$:
\begin{align}
\label{positivityF}
(i)\ F(\gz) > 0, \quad -1 < \gz <1,\quad
(ii)\ F(\pm 1) = 0,\quad
(iii)\ F'(\pm 1) = \mp 2p_c(\pm1).
\end{align}

\subsection{Extremal Admissible Metrics} 
From Proposition 1 in \cite{ACGT08} we know that an admissible metric $g$ given by \eqref{g} is extremal precisely when 
$$F''(\gz) = \left(\prod_{a \in \hat{\cala}} (1+r_a \gz)^{d_a -1} \right)P(\gz),$$
where $P(\gz)$ is a polynomial of degree at most $|\hat{\cala}| +1$ (at most $|\hat{\cala}|$ if $g$ has constant scalar curvature) satisfying that
$$P(-1/r_a) = 2 d_a s_a r_a \prod_{j\in \hat{\cala}\setminus \{a\} }\left( 1-\frac{r_j}{r_a} \right).$$

For fixed admissible data and with a choice of allowable parameters $r_a$, $a\in \cala$ (that is, a choice of an admissible K\"ahler class 
$\Omega_{\bfr})$, it is shown in \cite{ACGT08} that the system above has a unique polynomial solution, $F_{extr}(\gz)$, satisfying
conditions (ii) and (iii) from \eqref{positivityF}. Further, $\Theta(\gz)$ given by $F_{extr}(\gz)/p_c(\gz)$ satisfies (ii) and (iii) of \eqref{positivity}.
An admissible extremal K\"ahler metric, with K\"ahler class $\Omega_{\bfr}$, then results exactly when also (i) of \eqref{positivityF} 
is satisfied.

A root counting argument due to 
Guan \cite{Gua95} and Hwang \cite{Hwa94}, and further explored by Hwang-Singer \cite{HwaSi02}), was adopted in \cite{ACGT08}
to prove Proposition \ref{acgtpr11}.

When not all the CSC K\"ahler metrics $\pm g_a$ have non-negative scalar curvature, the root counting argument breaks down and in general there are many well known such examples where $F_{extr}(\gz)$ will not satisfy (i) of \eqref{positivityF} for all admissible K\"ahler classes (see e.g. Remark 7 in \cite{ACGT08}). While it is also known that if $r_a$ is sufficiently small for all $a\in \cala$, then (i) of \eqref{positivityF} \underline{is} satisfied by $F_{extr}(\gz)$; however, for our purpose this is not useful. For the most part, in this paper, the K\"ahler class is unspecified. Thus, we are looking for cases where we can say that every K\"ahler class has an extremal K\"ahler metric. 
Within the current framework, this means that we are looking for cases where we can say that every admissible K\"ahler class has an extremal K\"ahler metric and that every K\"ahler class is admissible. 

When $N$ is a compact Riemann surface of genus at least two, Theorem 6 in \cite{ACGT08}, which we apply to obtain Proposition \ref{gencoprop}, provides some positive results that seems to indicate that larger $d_0$ and/or $d_\infty$ could be of value. 

Now assume that $N=\bbc\bbp^1\times \Sigma_g$, where $\Sigma_g$ is a compact Riemann surface of genus $g$ at least two. 
We set $\cala =\{1,2\}$. Note that all K\"ahler classes are admissible in this case.

From Section 3.1 in \cite{ACGT08} we know that
$$F_{extr}(\gz)= (1-\gz^2)\left( p_c(\gz) + (1+\gz)^{d_0+1}(1-\gz)^{d_\infty+1} q(\gz)\right).$$
where $q(\gz)=c\gz+e$ and $c,e$ are determined by
the conditions above. 
Working this out explicitly and in general is straightforward (albeit messy), but it gets hopelessly complicated to sort out when the admissible data is such that $F_{extr}(\gz)$ satisfies (i) of \eqref{positivityF}. 

However, to illustrate that further existence results are out there, let us for simplicity assume that $s_1=2$, $0<r_1<1$ (so $\omega_{N_1}/(2\pi)$ is a K\"ahler form with primitive K\"ahler class on $\bbc\bbp^1$) and  $s_2=-2$, $0<r_2<1$ (meaning $\omega_{N_2}/(2\pi)$ is a K\"ahler form whose class is $(g-1)$ times a primitive K\"ahler class on $\Sigma_g$). We will also assume that $d_0=d_\infty=1$. 
Now one can calculate that
$$F_{extr}(\gz)= \frac{(1-\gz^2)^2h(\gz)}{3 \left(3 r_1^2 r_2^2-7 r_1^2+8 r_1 r_2-7 r_2^2+35\right)}$$
with
$$
h(\gz) = a_0 + a_1(\gz+1) + a_2 (\gz+1)^2 + a_3 (1+\gz)^3,
$$
where
\tiny
$$
\begin{array}{ccl}
a_0 
 &=&   3 (1-r_1) (1-r_2) \left(3 r_1^2 r_2^2-7 r_1^2+8 r_1 r_2-7 r_2^2+35\right)\\
\\
a_1 &=&  (1-r_2)\left(12 r_1^3 r_2^2+15 r_1^3 r_2+7 r_1^3 +105 r_1+49 r_2^2+105 r_2\right)\\
\\
& -&  (1-r_2)\left(21 r_1^2 r_2^2+13 r_1^2 r_2+56 r_1^2+48 r_1 r_2^2+91 r_1 r_2\right)\\
\\
a_2 &=& 2 \left(3 r_1^3 r_2^3+3 r_1^3 r_2^2+8 r_1^3 r_2+2 r_1^2 r_2^2
+14 r_1^2+49 r_1 r_2 +7 r_2^3+14 r_2^2\right) \\
\\
&-& 2\left(7 r_1^3+3 r_1^2 r_2^3+30 r_1^2 r_2
+22 r_1 r_2^3+30 r_1 r_2^2\right) \\
\\
a_3 &=& 10 r_1 r_2 (2-r_1+r_2) (r_1+r_2)
\end{array}
$$
\normalsize

Since all the coefficients, $a_i$, for the powers of $(1+\gz)$ are positive (bear in mind that $0<r_i<1$), we see that $h(\gz)>0$ for $\gz>-1$ and hence
$F_{extr}(\gz)>0$ for $-1<\gz<1$. In conclusion, we have the following statement

\begin{proposition}\label{appenprop}
Let $P$ be the admissible manifold $P=\bbp(E_0\oplus E_\infty)\ra{1.8} \bbc\bbp^1\times\Sigma_g$, where $\Sigma_g$ is a compact Riemann surface of genus $g>1$. Let $\Omega_1$ denote the standard Fubini-Study metric on $\bbc\bbp^1$ (with volume one) and let
$\Omega_2$ denote the standard K\"ahler area form on $\Sigma_g$ (with scalar curvature $8\pi(1-g)$).
Assume
$E_0=L_0 \times \bbc^2$, $E_\infty=L_\infty \times \bbc^2$, $c_1(L_0)=[-\omega_0]$, and $c_1(L_\infty)=[-\omega_\infty]$, where
$\omega_0=2\Omega_1+g\Omega_2$ and $\omega_\infty =  \Omega_1+\Omega_2$.
Then every K\"ahler class on $P$ admits an extremal (admissible) metric.
\end{proposition}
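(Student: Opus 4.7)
The plan is to reduce the proposition to the explicit admissible-extremal-polynomial computation carried out just before the statement, and then verify positivity of that polynomial on $(-1,1)$. The first step is to identify the given data with the admissible framework of Section~\ref{admsect}. Since $E_0 = L_0\otimes \bbc^2$ and $E_\infty = L_\infty\otimes\bbc^2$, we have $d_0=d_\infty=1$, and
\[
\frac{c_1(E_\infty)}{d_\infty+1}-\frac{c_1(E_0)}{d_0+1}=c_1(L_\infty)-c_1(L_0)=[\omega_0-\omega_\infty]=[\Omega_1+(g-1)\Omega_2],
\]
which, writing $\omega_N=\sum_{a\in\cala}\omega_{N_a}$ with $\cala=\{1,2\}$, identifies $\omega_{N_1}=2\pi\Omega_1$ and $\omega_{N_2}=2\pi(g-1)\Omega_2$. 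This exhibits $P$ as admissible with base a K\"ahler product in the sense of Section~\ref{admsect}.

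Next I would observe that since $b_2(\bbc\bbp^1)=b_2(\Sigma_g)=1$ and $b_1(\bbc\bbp^1)=0$ while $b_1(\Sigma_g)\neq 0$ only for the single factor $a=2$, Remark~2 of \cite{ACGT08} applies and guarantees that \emph{every} K\"ahler class on $P$ is admissible. Hence it suffices to prove that each admissible K\"ahler class $\Omega_{\bfr}$, parametrized by $(r_1,r_2)\in(0,1)\times(0,1)$, carries an admissible extremal K\"ahler metric. By the general theory recalled in Appendix~\ref{APB}, this is equivalent to showing that the unique polynomial $F_{extr}(\zeta)$ solving the extremal ODE and the boundary conditions (ii) and (iii) of \eqref{positivityF} also satisfies positivity condition (i) on $(-1,1)$.

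At this stage I would invoke the explicit formula for $F_{extr}(\zeta)$ that was already derived in the Appendix for the case $d_0=d_\infty=1$, $s_1=2$, $s_2=-2$, which is precisely the case at hand once the standard normalizations of $\Omega_1$ and $\Omega_2$ are matched with the conventions of \cite{ACGT08}. This gives the factorization
\[
F_{extr}(\zeta)=\frac{(1-\zeta^2)^2\,h(\zeta)}{3\bigl(3r_1^2r_2^2-7r_1^2+8r_1r_2-7r_2^2+35\bigr)},
\qquad h(\zeta)=\sum_{i=0}^{3}a_i(1+\zeta)^i,
\]
with coefficients $a_i$ as tabulated there. The final step is to check that for $0<r_1,r_2<1$, each coefficient $a_i$ is strictly positive and the denominator is strictly positive (the latter is elementary: on $(0,1)^2$ the quantity $35-7r_1^2-7r_2^2+8r_1r_2+3r_1^2r_2^2\geq 35-14-14=7>0$). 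Then $h(\zeta)>0$ for every $\zeta>-1$ and hence $F_{extr}(\zeta)>0$ on $(-1,1)$, so $\Omega_{\bfr}$ contains an admissible extremal K\"ahler metric.

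The main obstacle is the positivity $a_i>0$ for $0<r_1,r_2<1$. For $a_0$ and $a_3$ this is immediate from inspection ($a_0$ has an overall factor $(1-r_1)(1-r_2)$ times the already-bounded denominator expression, and $a_3=10r_1r_2(2-r_1+r_2)(r_1+r_2)$). For $a_1$ and $a_2$ one must bound the sum of positive monomials against the sum of negative monomials on the open unit square; I plan to do this term-by-term, pairing each negative monomial with a dominating positive monomial of the same total degree in $(r_1,r_2)$ (for instance $56r_1^2\leq 105r_1$ since $r_1<1$, etc.), which converts the verification into a finite list of elementary inequalities. Once these are dispatched the proposition follows.
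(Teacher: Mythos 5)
Your proposal follows the same route as the paper's Appendix argument: identify the data as admissible with $d_0=d_\infty=1$, $s_1=2$, $s_2=-2$, $0<r_1,r_2<1$, use Remark 2 of \cite{ACGT08} to see every K\"ahler class is admissible, and then establish positivity of $F_{extr}$ on $(-1,1)$ via the explicit factorization $F_{extr}(\zeta)=(1-\zeta^2)^2h(\zeta)\big/\bigl(3(3r_1^2r_2^2-7r_1^2+8r_1r_2-7r_2^2+35)\bigr)$ with $h(\zeta)=\sum_{i=0}^3 a_i(1+\zeta)^i$ and $a_i>0$. The only step you defer, the elementary verification that $a_1,a_2>0$ on the open unit square, is precisely what the paper itself asserts without detail (note that for $a_2$ the pairing requires a bit more care than your $56r_1^2\le 105r_1$ example, e.g.\ splitting some negative monomials before dominating them), so the argument is essentially identical to the paper's.
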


\subsection{Regular transverse K\"ahler class: $d=1$ admissible case}\label{kahclass}

At this point we will not attempt to determine the K\"ahler class of the regular quotient of a fiber join in general. However, in the special case of an admissible fiber join with $d=1$, we can extract this information from \eqref{transsympl}. We shall see that in this case -with one additional assumption added- the K\"ahler class is indeed admissible (up to scale).

So assume we have a $d=1$ fiber join as defined in Theorem \ref{Yamthm} and the complex manifold arising as the quotient of the regular Reeb vector field $\xi_1$ 
is equal to $\bbp\left(L^*_1\oplus L^*_2\right) \rightarrow N$ where the base $N$ is a local product of K\"ahler manifolds $(N_a,\Omega_a)$, $a \in \cala \subset \bbn$, where $\cala$ is a finite index set.
For convenience set $L^*_1= L_0$ and $L^*_2= L_\infty$, where $L_0$ and $L_\infty$ are some holomophic line bundles, and set
$\omega_1=\omega_0$ and $\omega_2=\omega_\infty$. Then $c_1(L_0)=-[\omega_0]$ and $c_1(L_\infty)=-[\omega_\infty]$.
Note that for $j=1,2$, the pullback of $\omega_j$ to the Sasaki manifold $M_j$ is $d\eta_j/(2\pi)$ (the factor of $2\pi$ is occasionally ignored or neglected). As part of the admissible assumption we have that
$c_1(L_\infty)-c_1(L_0)= \sum_{a\in \cala}  [\epsilon_a\Omega_a]$, where $\epsilon_a=\pm 1$ or we may write
$c_1(L_\infty)-c_1(L_0)=\frac{1}{2\pi}[\gro_N]$, where $\gro_N=\sum_{a\in \cala}\gro_{N_a}$ and $\gro_{N_a}=2\pi\epsilon_a\Omega_a$.

This means that $2\pi([\omega_0]-[\omega_\infty])=[\gro_N]$. We shall make one additional assumption, namely that
\begin{equation}\label{xassumption}
2\pi([\omega_0]+[\omega_\infty])= \sum_{a\in\cala}\frac{1}{r_a}[\omega_{N_a}],
\end{equation}
for some $r_a$ with same sign as $\omega_{N_a}$ and
$0<|r_a|<1$.

Equation \eqref{transsympl} with $\bfa=(1,1)$ for $d=1$ now reads as
$$d\eta_\BOne= r_1^2d\eta_1 + 2(r_1dr_1\wedge (\eta_1+d\theta_1)) +r_2^2d\eta_2 + 2(r_2dr_2\wedge (\eta_2+d\theta_2))$$

Consider now the transverse K\"ahler structure on 
$\bbp(L_0\oplus L_\infty) \rightarrow N$ (same as $\bbp(\BOne \oplus L_0^*\otimes L_\infty) \rightarrow N$).
We follow the ideas on page 557 in \cite{ACGT08} with no blow-downs:
Since $(r_j,\theta_j)$ are polar coordinates of $L_j^*$ we can say that $z_0:= \frac{1}{2}r_1^2$ and $z_\infty:= \frac{1}{2}r_2^2$  are the moment maps of the natural $S^1$ action on $L_0$ and $L_\infty$, respectively. On $2=z_0+z_\infty$, the function 
$z:= z_0-1=1-z_\infty$ descends to a fiberwise moment map (with range [-1,1]) for the induced $S^1$ action on $\bbp(\BOne \oplus L_0^*\otimes L_\infty) \rightarrow N$. This allows us to rewrite  \eqref{transsympl}  as
$$d\eta_\BOne= 2 (d\eta_1+d\eta_2) + 2d(z\theta),$$
where
$\theta= (\eta_1+d\theta_1) - (\eta_2+d\theta_2)$ is the connection form on $L_0^*\otimes L_\infty$.
According to pages 556-557 in \cite{ACGT08} we have that this descends to a K\"ahler form on $\bbp(\BOne \oplus L_0\otimes L_\infty^*) \rightarrow N$ with K\"ahler class 
$$2(2\pi([\omega_1]+[\omega_2]) + \Xi),$$ 
where $\Xi$ is the fixed class described above.
Due to our extra assumption we then have that - up to scale - the K\"ahler class is
$$ \sum_{a\in\cala}\frac{1}{r_a}[\omega_{N_a}] + \Xi,$$
with $r_a$ as given in \eqref{xassumption},
which is precisely an admissible K\"ahler class.

\begin{remark}
If colinearity holds on top of the above assumptions, we have according to Proposition \ref{joinsident} that  the fiber join is just a regular join.
Here $\omega_i=b_i \omega_N$. Connecting with the notation in \cite{BoTo14a} we have $l_1(w_1,w_2)=(b_1,b_2)$, $l_2=1$, and therefore in Theorem 3.8 of \cite{BoTo14a}, the regular quotient has $n= b_1-b_2$. In the above setting $\cala$ is a singleton, $[\omega_{N_a}]=2\pi n [\omega_N]$, and 
$2\pi([\omega_1]+[\omega_2]) + \Xi = 2\pi (b_1+b_2) [\omega_N] +\Xi = \frac{1}{r_a}[\omega_{N_a}] + \Xi$
with $r_a=\frac{b_1-b_2}{b_1+b_2}$, as it should be according to (44) and (59) in \cite{BoTo14a}.
\end{remark}

\def\cprime{$'$} \def\cprime{$'$} \def\cprime{$'$} \def\cprime{$'$}
  \def\cprime{$'$} \def\cprime{$'$} \def\cprime{$'$} \def\cprime{$'$}
  \def\cdprime{$''$} \def\cprime{$'$} \def\cprime{$'$} \def\cprime{$'$}
  \def\cprime{$'$}
\providecommand{\bysame}{\leavevmode\hbox to3em{\hrulefill}\thinspace}
\providecommand{\MR}{\relax\ifhmode\unskip\space\fi MR }
\providecommand{\MRhref}[2]{%
  \href{http://www.ams.org/mathscinet-getitem?mr=#1}{#2}
}
\providecommand{\href}[2]{#2}

\end{document}